\numberwithin{equation}{section}
\theoremstyle{plain}
\theoremstyle{plain}
\newtheorem{theorem}{Theorem}[section]
\newtheorem{corollary}[theorem]{Corollary}
\newtheorem{proposition}[theorem]{Proposition}
\newtheorem{lemma}[theorem]{Lemma}
\theoremstyle{definition}
\theoremstyle{remark}
\newtheorem{remark}[theorem]{Remark}
\theoremstyle{definition}
\theoremstyle{remark}
\mathchardef\emptyset="001F
\newcommand{\stress}{\boldsymbol{\sigma}} 
\newcommand{\strain}{\boldsymbol{\varepsilon}} 
\newcommand{\R}{\mathbb{R}}
\newcommand{\E}{\mathcal{E}}
\newcommand{\C}{\mathbb{C}}
\newcommand{\e}{\mathrm{E}}
\newcommand{\id}{\mathit{id}}
\newcommand{\G}{\mathcal{G}}
\newcommand{\di}{\mathrm{d}}
\newcommand{\A}{\mathcal{A}}
\newcommand{\HH}{\mathcal{H}}
\newcommand{\Om}{\Omega}
\newcommand{\M}{\mathbb{M}}
\newcommand{\F}{\mathcal{F}}
\newcommand{\J}{\mathcal{J}}
\newcommand{\p}{\boldsymbol{p}}
\newcommand{\argmin}{\mathrm{argmin}}
\definecolor{dred}{rgb}{.8,0,0}
\definecolor{ddmagenta}{rgb}{0.7,0,0.9}
\definecolor{ddcyan}{rgb}{0,0.2,1.0}
\definecolor{Orchid}{rgb}{0.7,0.4,0}
\newcommand{\EEE}{\color{black}}
\newcommand{\coloneq }{\hspace{1pt}\raisebox{0.74pt}{\scalebox{0.8}{:}}\hspace{-2.2pt}=}
\newcommand{\ha}{\mathbb{H}}
\newcommand{\q}{\boldsymbol{\mathrm{Q}}}
\newcommand{\qq}{\boldsymbol{q}}
\newcommand{\eeta}{\boldsymbol{\eta}}
\title[Topology optimization for incremental
elastoplasticity]{Topology optimization for incremental
  elastoplasticity:  a phase-field approach}
\author[S. Almi]{Stefano Almi}
\address[Stefano Almi]{Faculty of Mathematics, University of Vienna, 
Oskar-Morgenstern-Platz 1, 1090 Wien, Austria.}
\email{stefano.almi@univie.ac.at}
\urladdr{http://www.mat.univie.ac.at/$\sim$almi}
\author[U. Stefanelli]{Ulisse Stefanelli} 
\address[Ulisse Stefanelli]{Faculty of Mathematics, University of
  Vienna, Oskar-Morgenstern-Platz 1, A-1090 Vienna, Austria,
Vienna Research Platform on Accelerating
  Photoreaction Discovery, University of Vienna, W\"ahringerstra\ss e 17, 1090 Wien, Austria,
 \& Istituto di
  Matematica Applicata e Tecnologie Informatiche {\it E. Magenes}, via
  Ferrata 1, I-27100 Pavia, Italy
}
\email{ulisse.stefanelli@univie.ac.at}
\urladdr{http://www.mat.univie.ac.at/$\sim$stefanelli}
\date{\today}
 \subjclass[2010]{74C05, 	%Small-strain, rate-independent theories of plasticity
 			   74P10,  %Optimization of other properties in solid mechanics
			   49Q10,  	%Optimization of shapes other than minimal surfaces
			   49J20, 	%Existence theories for optimal control problems involving partial differential 						equations
			   49K20,  	%Optimality conditions for problems involving partial differential equations
			   }
 \keywords{ Topology optimization, elastoplasticity, first-order conditions.}
\begin{document}
\maketitle

\begin{abstract}
 We discuss a topology optimization problem for an
elastoplastic medium. The distribution of material in a region is
optimized with respect to a given target functional taking into
account compliance. The incremental elastoplastic problem serves as
state constraint. We prove that the  topology
optimization problem admits a solution. First-order optimality
conditions are obtained by considering a regularized problem and
passing to the limit.
\end{abstract}

\section{Introduction}
\label{s:intro}

Topology Optimization is concerned with determining the optimal  
shape of a mechanical piece against a number of criteria. Besides mechanical
performance, these criteria may include weight, manufacturing
costs, topological, and geometrical features. The distribution of
material within an a-priori given region $\Omega\subset {\mathbb R}^n$ is the control parameter of
the process. Given the portion $E\subset \Omega$ to be filled with
material, one determines the mechanical response of the body and
minimizes a target functional depending on $E$ and such response. 
This very general setting
applies to a number of different shape design problems, from
mechanical engineering, to aerospace and 
automotive, to architectural engineering, to biomechanics \cite{MR2008524}.

The applicative interest in topology optimization has triggered an
intense research activity, which in turn generated a wealth of
results at the engineering and computational level. Starting from the
pioneering paper \cite{Bendsoe0}, see also \cite{Allaire}, the literature has developed to
cover a wide range of different mechanical settings \cite{Zhang},
including strain-gradient theories \cite{Li}, finite strains \cite{Wallin}, thermoelasticity
\cite{Zhang2}, material interfaces \cite{Vermaak}, surface effects
\cite{Nanthakumar}, graded materials \cite{Carraturo}, stochastic
effects \cite{Carrasco}, and fluid-structure interactions
\cite{Lundgaard}. The reader is referred to the monographs
\cite{Deaton,Zhu} for additional material. Correspondingly, extensive
numerical experimental campaigns have been developed. Among the
different 
computational approaches in use one can mention finite elements~\cite{Bruggi,Petersson}, NURBS~\cite{Gao},  smoothed-particle hydrodynamics~\cite{Lin},
shape-derivatives~\cite{Sokolowski}, and level-set
methods~\cite{Allaire2}.

On the more theoretical side, 
the linear elastic setting has been considered in a number of
contributions. In \cite{Bourdin} the existence of optimal
shapes is tackled by introducing a penalization of interfaces between
solid $E$ and void $\Omega \setminus E$ and considering an additional {\it phase-field} regularization. The actual position
of the body is hence modeled as a level set of a scalar field $z\in
[0,1]$ and gradients of this parameter are penalized. By removing such
penalization, a solution of the original {\it sharp-interface} limit
is recovered by means of a $\Gamma$-convergence argument~\cite{DalMaso}. Existence under additional stress constraints is
discussed in \cite{Burger} and the extension to hyperelasticity is presented
in \cite{Penzler}.
More recently, 
first-order optimality conditions have been obtained in the linear
elastic setting
\cite{Blank2,Blank1,Carraturo} for both the sharp-interface
model and its phase-field approximation, also for graded materials.

Inelastic effects such as large or cyclical stresses,  
permanent deformations, damage, and
fracture appear ubiquitously in applications. Topology optimization in
the inelastic setting is for instance paramount to the bending, punching, and
machining of steel sheets, which are tasks of the utmost applicative relevance.
Correspondingly, topology optimization in inelastic settings 
has already attracted strong attention in the
engineering community, see the pioneering \cite{Swan} and
\cite{Amir,James,Liu,Maute,Tortorelli,Wallin} among others. 

 From a mathematical standpoint, first results of existence of optimal shapes have been provided in~\cite{haslinger1, haslinger2, Hlavacek1, Hlavacek3} for elasto-perfectly plastic bodies in a static regime. The hardening effect has been considered in~\cite{Pistora}, while~\cite{Hlavacek2} dealt with shape optimization in an evolution setting. All the mentioned papers deal with two-dimensional problems. Furthermore, they a priori assume a Lipschitz regularity of the unknown optimal shape, in order to guarantee compactness and therefore existence of solutions. The beam structure and frame optimization was considered in~\cite{Karkauskas, Khanzadi, Pedersen}, where again only an existence result was shown. Some first optimality conditions in terms of shape derivatives appeared in~\cite[Chapters~4.8 and 4.9]{Sokolowski2} for an elastic torsion problem and for the viscoplastic model of Perzyna. The latter framework has been also considered in the recent paper~\cite{Maury} as a regularization of static perfect plasticity. For such a regularized model, in~\cite{Maury} the authors consider a volume and surface optimization problem. In this setting, the shape gradients are computed, under the usual smoothness assumptions on the shape~$\Om$ and on its variation. The shape derivatives are then combined with a level set method in order to numerically solve the shape optimization problem in presence of plasticity. However, the theoretical existence of solutions and the passage from the regularized to the original perfect plasticity problem are not fully discussed.

 The focus of this paper is on the phase-field approach to
%Different from the existing literature, in the present paper we intend to discuss and analyze a phase-field approach to a
%We intend to fill this gap in this paper, by investigating
 topology optimization  in the setting of incremental, linearized
elastoplasticity with hardening. Referring to Section \ref{s:setting}
for all necessary assumptions and details,  our starting point is the \EEE {\it
  sharp-interface} problem
\begin{equation}\label{min}\min_{z}\{\J(z,u) \ : \ (u,\p) \in \argmin \,\E(\cdot,z)\}
\end{equation}
where the compliance-type {\it target functional}  is defined as
$$
\J(z, u) \coloneq \int_{\Omega} \ell(z) f{\, \cdot\,} u \, \di x +
\int_{\Gamma_{N}}  g { \, \cdot\, } u \, \di \HH^{n-1} + \frac{1}{6} \, \mathrm{Per}( \{z=1\}; \Om) \,,$$
and the {\it incremental elastoplastic state functional} reads
\begin{align*}%}\label{e:energy}
\E( z, u, \p) \coloneq & \  \frac{1}{2} \int_{\Om} \C(z)  ({\rm E}u-\p)  {\, \cdot} ({\rm E}u-\p)
\, \di x + \frac{1}{2} \int_{\Om} \ha(z) \p{\,\cdot \,} \p \, \di x + \int_{\Om} d(z) | \p | \, \di x
\\
&
 - \int_{\Om}  \ell(z)  f {\, \cdot \, } u \, \di x - \int_{\Gamma_{N}} g {\, \cdot\, } u \, \di \HH^{n-1} \,. \nonumber
\end{align*}
The  function  $z \colon \Omega \to \{0,1\}$   is a phase
indicator, distinguishing between two different elastoplastic
media. The phase $\{z=1\}$ corresponds to the body
to be determined via the optimization process, whereas the
phase $\{z=0\}$ is assumed to be a very soft
material filling the complement of the container $\Omega$, see
Subsection \ref{sec:const}. \EEE
 The smooth nonnegative function $ \ell(\cdot)$ represents the density of
the body, in dependence of $z$. In particular, $\ell(0)=0$.
We assume  the whole container $\Omega$ to deform under the action
of the body force $ \ell(z)  f$ and the boundary traction $g$. The
elasticity tensor~$\C$, the linear hardening tensor~$\ha$, and the
yield stress $d$ hence depend on~$z$,  see Section
\ref{sec:assumptions}.  The fields
$u\colon \Omega \to {\mathbb R}^n$ and $ \p \colon \Omega \to {\mathbb R}^{n\times
  n}$  are the {\it displacement} and the {\it plastic strain},
respectively, and ${\rm E}u=(\nabla u + \nabla u^T)/2$ is the
symmetric {\em total strain}.

Given   $z$, \EEE the minimization of the incremental elastoplastic state
functional $\E$ gives a unique elastoplastic state $(u,\p)$. Note that
$\E$ is nothing but the {\it complementary elastic energy} of the body,
augmented by the linear {\it dissipation} term $d(z)|\p|$, modeling the
plastic work from a prior unplasticized state. Then,
the displacement component $u$ enters into the definition of $\J$, which
ultimately depends just on $z$. Note that the functional $\J$ features
the total-variation norm of
$z$, which indeed corresponds to the perimeter of   the phase \EEE $\{z=1\}$
in $\Omega$. In particular, the boundedness of $\J$ implies that $z\in
BV(\Omega;\{0,1\})$, the space of functions of {\it bounded variation}~\cite{MR1857292}.

 Besides the sharp-interface problem \eqref{min}, we also consider its
{\it phase-field} approximation 
\begin{equation}\label{min2}\min_{z}\{\J_\delta(z,u) \ : \ (u,\p) \in \argmin \,\E(\cdot,z)\}
\end{equation}
where, for $\delta >0$, the phase-field target functional $\J_\delta$
is defined as
$$ 
\J_{\delta} (z, u) \coloneq \int_{\Om} \ell(z) f {\, \cdot \,} u \,
\di x + \int_{\Gamma_{N}}  g { \, \cdot \, } u \, \di \HH^{n-1} +
\int_{\Om} \frac{\delta}{2} | \nabla{z} |^{2}  \, \di x+
\int_{\Om}  \frac{ z^{2}
  (1-z)^{2}}{ 2 \delta} \, \di x\,,
$$
 where the {\it phase-field} function $z$ is  now assumed to belong to
$H^1(\Omega;[0,1])$. %  and referred to as {\it phase-field}. 

We prove that problems \eqref{min} and \eqref{min2} admit solutions,
see Propositions \ref{p:1.1} and \ref{p:1}, respectively.  In
addition, we prove that optimal $z_\delta$ solving problem \eqref{min2}
converge, up to subsequences, to a solution to the sharp-interface problem
\eqref{min}, see Corollary \ref{cor}. Indeed, this relies on 
a general $\Gamma$-convergence result as $\delta \to 0$, see
Theorem \ref{t:gamma_conv}.

The phase-field problem~\eqref{min2} is then further regularized by
replacing the dissipation term $d(z)|\p|$ by $d(z)h_\gamma(\p)$ in~$\E$,
where~$h_\gamma$ is a smooth approximation of the norm, namely
$h_\gamma(\p) \to |\p|$ as $\gamma \to +\infty$. The ensuing 
regularized phase-field problem admits solutions
$z_{\delta}^{\gamma}$. For fixed $\delta>0$, as $\gamma \to +\infty$ the sequence $z_{\delta}^{
  \gamma}$ converges, up to subsequences, to a solution of the
phase-field problem~\eqref{min2}, see Proposition~\ref{p:2}. 

In addition,
owing to the smoothness of the corresponding control-to-state map for
all $\gamma$, one
can derive first-order optimality conditions for the regularized
phase-field problem, see Theorem~\ref{t:regoptim} and Corollary~\ref{c:regoptim}. Eventually, by letting
$\gamma\to +\infty$ we derive first-order
optimality conditions for the phase-field problem~\eqref{min2}, see
Theorem~\ref{t:lim_optim}. 

 Compared with available results, the novelty of our contribution
is twofold. On the one hand, we provide rigorous existence and
convergence results for phase-field approximations including first-order
optimality conditions. On the other hand, we focus here on linear
kinematic hardening and optimize against compliance. 

Before closing this introduction, let us mention that the literature on
existence and optimality conditions for elastoplastic problems is
rather scant and, to our knowledge, always focusing on force and
traction  control. The incremental elastoplastic problem is discussed in
\cite{delosReyes,Herzog2,Herzog3} whereas the 
quasistatic setting is addressed in the series
\cite{Wachsmut1,Wachsmut2, Wachsmut3}, see also \cite{sww}. Some
related results on elastoplasticity are in \cite{Brokate}, whereas
\cite{Adam} and \cite{Eleuteri,Eleuteri2} deal with quasistatic
adhesive contact and shape memory alloys, respectively. The reader is
referred to \cite{Rindler,Rindler2} for some abstract optimal control
theory for rate-independent systems.

As concerns our quest for first-order optimality conditions, our
analysis follows some argument from \cite{delosReyes}. Here, the
optimal control problem in incremental elastoplasticity is discussed, with  elastoplastic state $(u,\p)$ controlled via $f$ and
traction $g$ and the coefficients $\C$, $\ha$, and $d$ are given
constants. In particular, the  incremental elastoplastic state
functional $\E$ in \cite{delosReyes} is linear with respect to the
controls. Here the control $z$ appears nonlinearly in $\E$
instead.   This generates some
additional difficulty which we overcome in a series of technical
lemmas in Section~\ref{s:reg} below.
 
This is the plan of the paper: we  review the mechanical model,
specify  our setting, and state and
prove existence of both sharp-interface problems and phase-field
approximations in Section~\ref{s:setting}, where we also discuss
$\Gamma$-convergence  as $\delta \to 0$.  The $\gamma$-regularized problem is then
discussed in Section~\ref{s:reg}, where the corresponding first-order
optimality conditions are derived. Eventually, in Section~\ref{s:optimality} we
pass to the limit as $\gamma \to +\infty$ and obtain first-order
optimality conditions for the phase-field approximation.

%%%%%%%%%%%%%%%%%%%%%%%%%%%%%%%%%%%%%%%%%%%%%

\section{Setting of the problem}
\label{s:setting}
 
 Our focus is on linearized elastoplastic evolution with kinematic
hardening. In particular, we concentrate on the incremental setting, also
called static, where the material is assumed to evolve from a
pristine, unplasticized  state \cite{Han}.

\subsection{Constitutive model}\label{sec:const}
Let~$\Om$ be an open bounded subset of~$\R^{n}$ with Lipschitz
boundary~$\partial\Om$ containing the elastoplastic 
specimen. Although $n=3$ suffices for the application, our
analysis is independent of $n\in \mathbb N$, which we hence keep general. % under
                                % kinematic hardening. 
The   body to be identified via the topology optimization is described \EEE by  the level sets of the scalar field~$z$   at each point.  In the {\it sharp-interface} case, we
  have $z\in \{0,1\}$ and the phase $\{z=1\}$ represents the
body to be identified. In the {\it phase-field} case, such 
body corresponds to the region $\{z>0\}$. In both cases, the phase
$\{z=0\}$ represents the complement in the design domain $\Om$ of the body
to be identified and  is here interpreted as very compliant
elastoplastic medium. \EEE
% This would for instance correspond to the situation where the  solid
% is immersed in a polymeric matrix filling the remainder of the
% domain~$\Om$. 
  This approach is indeed
classical \cite{Allaire2002} and allows for a sound mathematical
treatment. \EEE  In particular, all state quantities are assumed to be
defined in the whole container~$\Omega$.

Let $\M^n, \, \M^n_S$, and $ \M^n_D$ denote the space of $n\times n$ 
matrices, symmetric matrices, and symmetric and deviatoric (trace
free) matrices, respectively.   We use the symbol $\cdot$ to
indicate both the classical contraction product between matrices
$\boldsymbol A\cdot \boldsymbol B = \sum_{i,j}A_{ij}B_{ij}$, as well
as the scalar product between vectors $u\cdot v = \sum_i u_iv_i$. \EEE

The elastoplastic state of the material is determined by prescribing
its {\it displacement} $u \colon \Om \to \R^n$ from some reference configuration and its {\it
  plastic strain} $\p \colon \Om \to\M^n_D$. 
By indicating by ${\rm E}u$ the {\it total strain} of the material, namely ${\rm E}u = (\nabla u + \nabla
u^T)/2$, we assume the classical \cite{Simo-Hughes} {\it additive strain
  decomposition}
\begin{equation}
{\rm E}u = \strain + \p\label{eq:decompose}
\end{equation}
where $\strain \in \M^n_S$ is the {\it elastic strain}, directly related to the
stress state of the material, whereas~$\p\in \M^n_D$ records the plasticization
of the material.

The {\it internal energy} density of the medium is classically
\cite{Lamaitre} given by
$$ \Psi(\strain,\p)=\frac12 \mathbb C(z) \strain \cdot  \strain +
\frac12 \mathbb H(z) \p \cdot \p.$$
The first term above is the {\it elastic energy} density whereas the
second corresponds to the {\it kinematic hardening} potential. Correspondingly, $\mathbb C(z)$ and  $\mathbb H(z)$ are the {\it elastic} and the
{\it hardening} tensor, respectively, two symmetric $4$-tensors,
depending on the scalar field~$z$.

By computing the variations of the internal energy density $\Psi$ with
respect to its arguments we obtain the {\it constitutive relations}
\begin{equation}
\stress =\mathbb C(z) \strain, \quad \boldsymbol  \chi=
-\stress+ \mathbb H(z) \p. \label{eq:const}
\end{equation}
Here, $\stress \in \M^n_S$ and $\boldsymbol \chi \in \M^n_S$  are the {\it
  stress} and the {\it thermodynamic force} associated with $\p$. In
particular, the first relation models the {\it linear elastic
  response} of the medium whereas the second features the {\it
  back-stress} $\mathbb H(z) \p$ due to the kinematic hardening
\cite{Lamaitre}.

The elastoplastic process results from the interplay of energy-storage
and dissipation mechanisms. In the incremental case, the constitutive relations
\eqref{eq:const} are indeed complemented by the {\it normality rule}
\cite{Han} based on Von Mises criterion
\begin{equation} \boldsymbol \chi \in d(z)\partial |\p|.\label{eq:normality}
\end{equation}
Here, $d(z)>0$ is the {\it yield stress} activating plasticization,
which is allowed to depend on  $z$ as well. The symbol $\partial |\p|$
denotes the subdifferential of the convex function $|\cdot|$ at
$\p$. In particular, $\partial |\p| = \p/|\p|$ if $\p \not =
\boldsymbol 0$ and $\partial |\boldsymbol 0| = \{\boldsymbol r \in \M^n_D \ : \
|\boldsymbol r |\leq 1\}$.

The normality rule \eqref{eq:normality} can be equivalently expressed
in the classical {\it complementarity form} by defining the {\it yield function}
$F(\boldsymbol \chi ,z)= |\boldsymbol \chi | - d(z)$ and letting
$$\p= \zeta \frac{\partial F(\boldsymbol \chi ,z)}{\partial
  \boldsymbol \chi}, \quad  F(\boldsymbol \chi ,z)\leq 0, \quad \zeta
\geq 0, \quad \zeta \, F(\boldsymbol \chi ,z)=0\,.$$
Note that the yield function $F$ is differentiable except in $\boldsymbol
\chi=\boldsymbol 0$, where nonetheless $\zeta=0$ since $F(\boldsymbol
0 ,z) = -d(z)<0$.

\subsection{Assumptions on the material parameters}\label{sec:assumptions}

Before moving on, let us collect here the assumptions on the material
parameters that will be used throughout, without further explicit
mention.

We start by assuming the elasticity tensor $\C$ and the
kinematic-hardening tensor $\ha$ to be {\it isotropic} for all $z$. In
particular, we ask for  
$$\C( z )  := 2\mu(z) \mathbb I + \lambda(z) ( \boldsymbol{\mathrm{I}}
\otimes  \boldsymbol{\mathrm{I}}), \quad  \ha(z):= h(z) \mathbb I  $$
where $\lambda$ and $\mu$ are the Lam\'e coefficients, $h$ is the
hardening modulus, and $\mathbb I$ and $\boldsymbol{\mathrm{I}}$ denote the identity 4 and
2-tensor, respectively. Isotropy in particular guarantees that $\C$ and $\ha$
map $\M^n_D$ to $\M^n_D$.

In what follows, we assume the material coefficients to be
differentiable with respect to $z$ and to be defined in all of
$\mathbb R$. In particular, we
ask 
$$
\mu, \,  \lambda,  \, h, \, d \in C^1(\mathbb R)
$$
 and impose them to be
constant on $\{z\leq 0\}$ and $\{ z\geq 1\}$.  This last requirement
 is of technical nature and 
allows us to drop the usual constraint $z \in [0,1]$ as it can be obtained a posteriori without changing the
solution of the problem
(see~\eqref{e:optimization1}--\eqref{e:optimization2} for further
details). 

Moreover, we assume all coefficients to be positive and bounded, uniformly with
respect to $z$, namely, 
$$\exists 0<\alpha<\beta<+\infty \, \forall z \in [0,1]: \quad \alpha \leq \mu(z),
\, \lambda(z), \, h(z), \,  d(z) \leq \beta\, .$$
This in particular implies that $\C$ and $\ha$ are uniformly
positive definite and bounded, independently of $z$, namely,
\begin{eqnarray}
&& \displaystyle \alpha_{\C} |\boldsymbol{\e} |^{2} \leq \C( z )
   \boldsymbol{\e} {\, \cdot \,} \boldsymbol{\e} \leq \beta_{\C} |
   \boldsymbol{\e} |^{2} \qquad \forall z  \in [0,1] ,\, \forall \boldsymbol{\e} \in \M^{n}_{S} \,, \label{e:1} \\ [1mm]
&& \displaystyle \alpha_{\ha} | \boldsymbol{\q} |^{2} \leq \ha(  z  )
   \boldsymbol{\q} {\, \cdot \,} \boldsymbol{\q} \leq \beta_{\ha} |
   \boldsymbol{\q} |^{2} \qquad \forall z  \in [0,1], \, \forall \boldsymbol{\q} \in \M^{n}_{D}\,, \label{e:2}
\end{eqnarray}
for some $0< \alpha_{\C} < \beta_{\C}<+\infty$ and $0 <
\alpha_{\ha} < \beta_{\ha} <+\infty$. 

% A   standard \EEE example of~  functions \EEE $\C(\cdot)$ and~$\ha(\cdot)$
%   fitting in our frame  is the convex combination 
% \begin{displaymath}
% \C( z ) \coloneq \theta( z ) \C_{solid} + (1 -  \theta ( z )) \C_{res} \qquad \ha( z ) \coloneq \theta ( z ) \ha_{solid} + (1 - \theta( z ) ) \ha_{res}
% \end{displaymath} 
% with $\theta\in C^{1}([0,1];[0,1])$ such that $\theta(0) = 0$,
% $\theta(1) = 1$, $\theta'(0) = \theta'(1) = 0$, and~$\C_{solid},
% \C_{compl}$ (resp.~$\ha_{solid}, \ha_{compl}$) satisfying~\eqref{e:1}
% (resp.~\eqref{e:2}). In this case,~$\C_{solid}$ and $\ha_{solid}$ are
% the elasticity and the hardening tensors of the solid,
% respectively.  

%   In particular, $\C_{compl}$ and $\ha_{compl}$ are
%  the elasticity and the hardening tensors of this soft medium,
%  occupying the {\it complement} of the solid in the container
%  $\Om$. Such complementary  soft medium is again assumed to be
%  elastoplastic. Still, by choosing $\C_{compl}$ appropriately, the
%  softness of the complementary of the medium can be assumed to be such
%  that the corresponding stress stays very small, so that no
%  plasticization occurrs there.
% \EEE 

In addition to the above, we let $\ell \in
C^1(\mathbb R)$ monotone nondecreasing with $\ell(z) = 0$ for $z \leq 0$ and
$\ell(z) =1$ for $z\geq 1$ express the {\it density} of
the   body to be identified, \EEE at a given value of the scalar field~$z$. In particular, observe that $\ell(z) = z$ in the sharp-interface case
$z\in \{0,1\}$.

Examples of material laws complying with the above assumption are 
\begin{align*}
  \mu(z) = \mu_0 + \mu_1 \ell(z),  \ \ 
\lambda(z) = \lambda_0 + \lambda_1 \ell(z),  \ \ 
h(z) = h_0 + h_1 \ell(z),  \ \ d(z)=d_0+d_1\ell(z)
\end{align*}
where the positive parameters $\mu_1$, $\lambda_1$, $h_1$, $d_1$ and
$\mu_0$, $\lambda_0$, $h_0$, $d_0$ correspond to the elastoplastic
  body to be identified \EEE and to the soft medium surrounding
  it, \EEE respectively. In particular, the
standard situation would be $0<\mu_0 \ll \mu_1$, $0<\lambda_0 \ll \lambda_1$, and
$0<h_0 \ll h_1$. On the contrary,~$d_0$ and~$d_1$ need not be ordered: if~$\mu_0$,~$\lambda_0$, and~$h_0$ are small, large values of the strain can be achieved under moderate stresses, so that the soft medium does not easily plasticize. The same effect can be achieved by choosing the yield stress $d_0 \gg 1$.

\subsection{ Equilibrium problem}  Let us now complement the
constitutive model \eqref{eq:const}--\eqref{eq:normality} with the
equilibrium system and boundary conditions. In particular, we assume
the displacement to be prescribed on the Dirichlet part $\Gamma_D$ of
the boundary $\partial \Om$. Thus, we will impose $u=w$ on
$\Gamma_D$ for some given $w\in H^{1}(\Om;\R^{n})$. Moreover, we
assume  a given traction $g \in L^2(\Gamma_N;\R^n)$ to be exerted on the Neumann part
$\Gamma_N$ of the boundary $\partial \Om$.  More precisely, let $\Gamma_D,\, \Gamma_N \subset \partial \Omega $ be open in
the topology of $\partial \Omega$ with $\overline \Gamma_N \cap \overline \Gamma_D=\emptyset$, where $\overline \Gamma_N$ and $
\overline \Gamma_D$ are closures in~$\partial \Omega$. We moreover
assume that $\Gamma_D$ has positive surface measure, namely  $\mathcal
H^{n-1} (\Gamma_D)>0$, where the
latter is the $(n{-}1)$-Hausdorff surface measure in $\R^n$. 
Furthermore, we assume that $\Omega \cup \Gamma_{N}$ is regular in the sense of Gr\"oger~\cite[Definition~2]{MR990595}, that is, for every $x \in \partial\Om$ there exists an open neighborhood~$U_{x} \subseteq \R^{n}$ of~$x$ and a bi-Lipschitz map~$\Psi_{x} \colon U_{x} \to \Psi(U_{x})$ such that $\Psi_{x} ( U_{x} \cap (\Om \cup \Gamma_{N}))$ coincides with one of the following sets:
\begin{eqnarray*}
&& \displaystyle V_{1} \coloneq \{ y \in \R^{n} : \, | y | \leq 1 , \, y_{n} <0\}\,, \\ [1mm]
&& \displaystyle V_{2} \coloneq \{ y \in \R^{n} : \, | y | \leq 1, \, y_{n}\leq 0\} \,, \\ [1mm]
&& \displaystyle V_{3} \coloneq \{ y \in V_{2} ; \, y_{n} <0 \text{ or } y_{1} >0\} \,,
\end{eqnarray*}
where $y_{i}$ is the $i$-th component of~$y \in \R^{n}$. This last
assumption   is crucially used   in the proof of
Theorem~\ref{t:regoptim}.

All in all, given $z \colon \Omega \to [0,1]$, the equilibrium of the system
corresponds to solving the differential problem
\begin{align}
 & \nabla \cdot \stress  + \ell(z) f =0 \quad \text{in} \  \Omega,\label{eq:10}\\
&\stress \nu =g \quad \text{on} \  \Gamma_N,\label{eq:12}\\
&u = w \quad \text{on} \  \Gamma_D,\label{eq:13}\\
&\stress = \C(z)\strain \quad \text{in} \  \Omega, \label{eq:14}\\
&  d(z) \partial |\p| + \ha(z)\p \ni \stress \quad  \text{in} \  \Omega,\label{eq:100}
\end{align}
where $\nu$ denotes the outward unit normal to $\Gamma_N$. The 
term $\ell(z)f$ corresponds to the applied body force, for $\ell(z)$
is the density and $f$ is the applied body force per unit density.

\subsection{ Variational formulation}  We now introduce a
variational formulation for \eqref{eq:10}--\eqref{eq:100}. 
Let us define the set of admissible states~$\A(w)$ as
\begin{equation}\label{e:A}
\A(w) \coloneq \{ (u, \strain, \p) \in H^{1}(\Om;\R^{n}) \times L^{2}(\Om; \M^{n}_{S}) \times L^{2}(\Om; \M^{n}_{D}): \, \e u = \strain + \p, \, u=w \text{ on~$\Gamma_{D}$} \}\,.%\,,
\end{equation}
 Note that the additive strain decomposition 
\eqref{eq:decompose} as well as the Dirichlet boundary condition~\eqref{eq:13} are included in the definition of $\A(w)$.
%where $\e u$ is the symmetric part of~$\nabla{u}$, and $\strain$ and~$\p$ are usually referred to as the elastic and the plastic strain, respectively.

%  accumulation of plastic strain is easier where~$z \sim 0$ (void region) and more difficult where~$z \sim 1$. The assumption~\eqref{e:hd}, in agreement with~\eqref{e:1}--\eqref{e:2}, says that even in the void region there is a residual plastic dissipation.

Given a state~$(u, \strain, \p) \in \A(w)$ and  a field 
 $z \in L^{\infty}(\Om)$, we define the  {\it
   incremental elastoplastic state 
   functional} 
\begin{align}\label{e:energy}
\E( z, u, \strain, \p) \coloneq & \  \frac{1}{2} \int_{\Om} \C(z)   \strain {\, \cdot \,} \strain \, \di x + \frac{1}{2} \int_{\Om} \ha(z) \p{\,\cdot \,} \p \, \di x + \int_{\Om} d(z) | \p | \, \di x
\\
&
 - \int_{\Om}  \ell(z)  f {\, \cdot \, } u \, \di x - \int_{\Gamma_{N}} g {\, \cdot\, } u \, \di \HH^{n-1} \,. \nonumber
\end{align}
%As we did for~$\C$,~$\ha$, and~$d$, in~\eqref{e:energy} we assume
%that~$\ell \in C^{1}(\R)$ is such that~$\ell$ is constant out of the
%interval~$[0,1]$ with $\ell(0) = 0$, $\ell(1) = 1$.

% At   the incremental (or static) \EEE level, the functional~$\E$ represents
% the total energy of a linearly elastoplastic material occupying~$\Om$
% with  density $\ell(z)$  and
% subjected to a displacement~$u$, an elastic strain~$\strain$, and a
% plastic strain~$\p$. In particular,
 Owing to our assumptions on $\C$, $\ha$, $d$, and $\ell$,
%we notice that because of our assumptions on the tensors~$\C$
%and~$\ha$ and on the functions~$d$ and~$\ell$, 
we have that $\E(z, u, \strain, \p) = \E( 0 \vee z \wedge 1, u,
\strain, \p)$ for every $z \in L^{\infty}(\Om)$. Thus, the constraint~$z \in [0,1]$ needs not to be explicitly imposed at this level, for it will follow directly from the minimization of a cost functional, as shown in~\eqref{e:optimization1}--\eqref{e:optimization2} below. This avoids some technicalities in Sections~\ref{s:reg} and~\ref{s:optimality}.

\subsection{ Topology optimization}
%We now turn to the topology optimization problem. 
The primal aim of
topology optimization is to detect the optimal distribution of a
material inside a known box, which is the open set~$\Om$ in our
context. From a mathematical standpoint, such a problem can be
formulated as an optimal control problem, in which the
set~$E\subseteq\Om$ to be filled with the material serves as a
\emph{control parameter}, while the physical properties of the
material act as a constraint through the equilibrium and
constitutive equations. 
%While such a mathematical approach has been already presented in many
%papers in the setting of linear or nonlinear elasticity (see,
%e.g.,~\cite{Blank1,Blank2,Burger,Carraturo}), topology optimization
%in presence of inelastic structures has been considered only at an
%engineering and computational level~\cite{James,Maute}. 
The aim of our work is to present and discuss a model for topology
optimization which accounts for possible plastic behaviors  in the
linear kinematic-hardening case.  The target functional  that we
want to minimize is the augmented {\it compliance}
\begin{equation}\label{e:J_sharp}
\J(E, u) \coloneq \int_{E}  \ell (z) \EEE f{\, \cdot\,} u \, \di x + \int_{\Gamma_{N}} g { \, \cdot\, } u \, \di \HH^{n-1} +  \frac{1}{6}  \, \mathrm{Per}( E; \Om) \,.
\end{equation} 
In the latter, the first two terms correspond to the classical
compliance and measure the ability of the body to resist to the
applied load~$\ell(z)f$ and traction $g$. In particular, at equilibrium the first two
terms are nothing but $\int_\Omega \stress\cdot ({\rm E}u -\p)\, {\rm
  d}x$, namely the stored elastic energy.

In \eqref{e:J_sharp}, $E \subseteq \Om$ is assumed to be a set of finite
perimeter in~$\Om$, i.e., the characteristic
function~$\mathbf{1}_{E}$ of~$E$ belongs to the space~$BV(\Om)$ of
functions of bounded variations~\cite{MR1857292}.
The last term in \eqref{e:J_sharp} hence penalizes  sets $E$ with large boundary, limiting the onset
of a very fine microstructure. In this regard, the specific choice of
the constant $1/6$ is merely motivated to ease notations with respect to the  
phase-field 
approximation argument, see  Theorem \ref{t:gamma_conv}. In particular, the analysis is independent from the
specific value of this constant, which can be changed with no
difficulty. Before moving on, let us 
% where  $u \in H^{1}(\Om; \R^{n})$,~$E \subseteq \Om$ is a set of finite
% perimeter in~$\Om$, i.e., such that the characteristic
% function~$\mathbf{1}_{E}$ of~$E$ belongs to the space~$BV(\Om)$ of
% functions of bounded variations~\cite{MR1857292}, and the symbol
% $\mathrm{Per}(E; \Om)$ denotes the perimeter of~$E$ in~$\Om$.  The
% constant $1/6$ in front of the perimeter term in $\J$ is just intended
% to ease notations with respect to the  
% phase-field 
% approximation argument, see  Theorem \ref{t:gamma_conv}. In particular, the analysis is independent from the
% specific value of this constant, which can be changed with no difficulty. 
% With an
% abuse of notation, we write $E \in BV(\Om)$ to indicate that~$E$ is a
% set of finite perimeter in~$\Om$. Moreover, for a sequence~$E_{k} \in
% BV(\Om)$ we say that $E_{k}$ converges to~$E \in BV(\Om)$
% in~$L^{1}(\Om)$ and write $E_{k} \to E$ if~$| E_{k} \triangle E| \to 0$, where~$\triangle$ stands for the symmetric difference of sets. 
% Let us briefly comment on the cost functional~$\J$. The first two
% integrals appearing in~\eqref{e:J_sharp} represent the
% \emph{compliance} and measure the ability of the body to resist to the
% applied loads~$f$ and~$g$. The perimeter functional, instead,
% penalizes sets $E$ with large boundary, limiting the onset
% of a very fine microstructure.
notice that the functional~$\J$
 could be modified to take into account, for instance,
 accumulation of plastic strain~$\p$ or a cost of production depending
 on the size of~$E$. We stay with the specific form above for the
 sake of definiteness.

The {\it sharp-interface} optimization problem  \eqref{min} is
therefore specified as   
\begin{eqnarray}
&& \displaystyle \min_{E \in BV(\Om)} \, \J ( E , u ) \,, \label{e:optimization1.1} \\[1mm]
&& \displaystyle \text{subjected to} \qquad \min_{( u, \strain , \p) \in \A (w)} \, \E (\mathbf{1}_{E}, u, \strain, \p) \,.  \label{e:optimization2.1}
\end{eqnarray}

The forward problem~\eqref{e:optimization2.1} represents the
elastoplastic equilibrium condition, and is formulated at a static,
incremental level. Under this constraint, we look for the optimal 
shape $E\subset \Omega $  by minimizing~$\J$ in~\eqref{e:optimization1.1}.
% At this level, we have to admit that the physical meaning of the optimization problem~\eqref{e:optimization1.1}--\eqref{e:optimization2.1} is limited, as elasto-plasticity is evolutionary in nature. Nevertheless, we stress that~\eqref{e:optimization1.1}--\eqref{e:optimization2.1} can be regarded as a time-discrete approximation of a quasi-static optimal control problem (see, e.g.,~\cite{MR3380972, MR2466091, MR2576524}), in which the minimization of the cost functional~$\J$ is coupled with the evolution equations of elasto-plasticity with hardening (see~\cite{MR3380972} and references therein). Therefore, the analysis of~\eqref{e:optimization1.1}--\eqref{e:optimization2.1} is essential for numerical simulations of time-continuous problems.

We now briefly discuss the existence of solutions to~\eqref{e:optimization1.1}--\eqref{e:optimization2.1}.

\begin{proposition}[Existence of optimal sets]
\label{p:1.1}
Let $f \in L^{2}(\Om;\R^{n})$, $g \in L^{2}(\Gamma_{N}; \R^{n})$, and $w \in H^{1}(\Om; \R^{n})$. Then, there exists a solution $E \in BV(\Om)$ to the optimization problem~\eqref{e:optimization1.1}--\eqref{e:optimization2.1}.
\end{proposition}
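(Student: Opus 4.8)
The plan is to run the direct method on the reduced cost $E \mapsto \J(E, u_E)$, where for a measurable $E \subseteq \Om$ we write $(u_E, \strain_E, \p_E)$ for the minimizer of $\E(\mathbf 1_E, \cdot, \cdot, \cdot)$ over $\A(w)$. First I would record that this forward (state) problem is well posed: for every $z \in L^\infty(\Om)$ the functional $\E(z, \cdot, \cdot, \cdot)$ is convex, lower semicontinuous, and strictly convex in $(\strain, \p)$ by \eqref{e:1}--\eqref{e:2}; moreover, combining the additive constraint $\e u = \strain + \p$ built into $\A(w)$, the Korn inequality on $\{u = w$ on $\Gamma_D\}$ (available since $\HH^{n-1}(\Gamma_D) > 0$), and the trace inequality $\|u\|_{L^2(\Gamma_N)} \le C\|u\|_{H^1(\Om)}$, one gets that $\E(z, \cdot, \cdot, \cdot)$ is coercive on $\A(w)$ in the $H^1 \times L^2 \times L^2$ norm, hence it admits a unique minimizer $(u_E, \strain_E, \p_E)$.

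Next I would establish uniform a priori bounds along a minimizing sequence $(E_k)$ for $\J$. Comparing the state $(u_{E_k}, \strain_{E_k}, \p_{E_k})$ with the competitor $(w, \e w, \boldsymbol 0) \in \A(w)$, and using $0 \le \mathbf 1_{E_k} \le 1$ together with \eqref{e:1}, one obtains $\E(\mathbf 1_{E_k}, u_{E_k}, \strain_{E_k}, \p_{E_k}) \le \E(\mathbf 1_{E_k}, w, \e w, \boldsymbol 0) \le M$ with $M$ depending only on $\|w\|_{H^1}$, $\|f\|_{L^2}$, $\|g\|_{L^2(\Gamma_N)}$ and $\beta_\C$, not on $k$. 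Dropping the nonnegative dissipation term and using \eqref{e:1}--\eqref{e:2}, Korn and the trace inequality exactly as in the coercivity estimate, this yields $\|u_{E_k}\|_{H^1} + \|\strain_{E_k}\|_{L^2} + \|\p_{E_k}\|_{L^2} \le C$ uniformly. In particular the compliance $\int_{E_k} f \cdot u_{E_k} + \int_{\Gamma_N} g \cdot u_{E_k}$ is bounded, so $\inf \J$ is finite, and since $\J(E_k, u_{E_k})$ is bounded above we also get $\mathrm{Per}(E_k; \Om) \le C$. Thus $(\mathbf 1_{E_k})$ is bounded in $BV(\Om)$; by compact embedding, along a subsequence $\mathbf 1_{E_k} \to \mathbf 1_E$ strongly in $L^1(\Om)$ and a.e., with $E$ of finite perimeter (the limit is again $\{0,1\}$-valued), while $u_{E_k} \rightharpoonup \bar u$ in $H^1$, $\strain_{E_k} \rightharpoonup \bar\strain$, $\p_{E_k} \rightharpoonup \bar\p$ in $L^2$; the constraints defining $\A(w)$ are stable under weak convergence, so $(\bar u, \bar\strain, \bar\p) \in \A(w)$.

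The key step is to identify $(\bar u, \bar\strain, \bar\p) = (u_E, \strain_E, \p_E)$. Since $z_k := \mathbf 1_{E_k} \to z := \mathbf 1_E$ boundedly and a.e., dominated convergence gives $\C(z_k), \ha(z_k), d(z_k), \ell(z_k) \to \C(z), \ha(z), d(z), \ell(z)$ strongly in $L^q(\Om)$ for every $q < \infty$. For fixed $(u, \strain, \p) \in \A(w)$ this immediately gives $\E(z_k, u, \strain, \p) \to \E(z, u, \strain, \p)$. On the other hand, I would show $\liminf_k \E(z_k, u_{E_k}, \strain_{E_k}, \p_{E_k}) \ge \E(z, \bar u, \bar\strain, \bar\p)$: the linear terms pass to the limit because $u_{E_k} \to \bar u$ strongly in $L^2(\Om)$ and in $L^2(\Gamma_N)$ (Rellich and compact trace) and $\ell(z_k) \to \ell(z)$ boundedly; for the elastic term I would use positive definiteness to write $\int_\Om \C(z_k)\strain_{E_k}\cdot\strain_{E_k} \ge 2\int_\Om \C(z_k)\strain_{E_k}\cdot\bar\strain - \int_\Om \C(z_k)\bar\strain\cdot\bar\strain$ and pass to the limit on the right-hand side via $\C(z_k)\strain_{E_k} \rightharpoonup \C(z)\bar\strain$ in $L^2$ (a weak–strong argument exploiting the symmetry of $\C(z_k)$ and $\C(z_k)\phi \to \C(z)\phi$ in $L^2$ for fixed $\phi$) together with dominated convergence, and likewise for the hardening term; the dissipation term splits as $\int_\Om d(z_k)|\p_{E_k}| = \int_\Om d(z)|\p_{E_k}| + \int_\Om (d(z_k)-d(z))|\p_{E_k}|$, the first being weakly lower semicontinuous in $\p$ by convexity and the second tending to $0$ since $\|d(z_k)-d(z)\|_{L^2} \to 0$ while $\|\p_{E_k}\|_{L^2}$ is bounded. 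Chaining these facts, $\E(z, \bar u, \bar\strain, \bar\p) \le \liminf_k \E(z_k, u_{E_k}, \strain_{E_k}, \p_{E_k}) \le \liminf_k \E(z_k, u, \strain, \p) = \E(z, u, \strain, \p)$ for every $(u, \strain, \p) \in \A(w)$, so $(\bar u, \bar\strain, \bar\p)$ minimizes $\E(\mathbf 1_E, \cdot)$ and, by uniqueness, coincides with $(u_E, \strain_E, \p_E)$.

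Finally, the compliance terms in $\J(E_k, u_{E_k})$ converge to those of $\J(E, u_E) = \J(E, \bar u)$ by the same strong/weak convergences, whereas $\mathrm{Per}(E; \Om) \le \liminf_k \mathrm{Per}(E_k; \Om)$ by lower semicontinuity of the perimeter under $L^1$ convergence, so $\J(E, u_E) \le \liminf_k \J(E_k, u_{E_k}) = \inf \J$ and $E$ is optimal. I expect the main obstacle to be exactly the stability of the state problem under the dependence of the coefficients on $z$ — here merely $\{0,1\}$-valued, but genuinely nonlinear in the later sections: since $\C(z_k)$ and $\ha(z_k)$ do not converge uniformly, weak lower semicontinuity of the quadratic parts of $\E$ is not automatic, and this is precisely what the positive-definiteness trick above is designed to handle.
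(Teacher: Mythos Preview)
Your proof is correct and follows essentially the same direct-method strategy as the paper's: compare with the competitor $(w,\e w,\boldsymbol 0)$ to get uniform state bounds, extract $L^1$/weak limits via $BV$-compactness, and conclude by lower semicontinuity of~$\E$ and~$\J$. The paper is terser --- it simply invokes ``the lower semicontinuity of~$\E$'' --- whereas you spell out the positive-definiteness trick for the quadratic terms and the splitting for the dissipation; this is a welcome level of detail but not a different argument.
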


\begin{proof}
The existence of a solution follows from an application of the Direct Method. Indeed, denoted with $(u_E, \strain_E, \p_E) \in \A(w)$ the unique solution of~\eqref{e:optimization2} for~$E\in BV(\Om)$, we clearly have
\begin{displaymath}
\E(\mathbf{1}_{E}, u_E, \strain_E, \p_E) \leq \E (\mathbf{1}_{E}, w, \e w, 0) \,.
\end{displaymath}
From~\eqref{e:1}--\eqref{e:2} and the previous inequality we deduce that $(u_E, \strain_E, \p_E)$ is bounded in~$H^{1}(\Om ; \R^{n}) \times L^{2}(\Om; \M^{n}_{S}) \times L^{2}(\Om ; \M^{n}_{D})$ uniformly w.r.t.~$E$.

Let~$ E_{k}  \in BV(\Om)$ be  a minimizing sequence. By definition of~$\J$ in~\eqref{e:J_sharp}, we immediately infer that~$E_{k}$ is bounded in~$BV(\Om)$, so that, up to a subsequence, $E_{k} \to E$ in~$L^{1}(\Om)$. Furthermore, also~$(u_{E_{k}}, \strain_{E_{k}}, \p_{E_{k}})$ admits a weak limit~$(u, \strain, \p)$ in $H^{1}(\Om ; \R^{n}) \times L^{2}(\Om; \M^{n}_{S}) \times L^{2}(\Om; \M^{n}_{D})$ with $(u, \strain, \p) \in \A(w)$.

By the lower semicontinuity of~$\E$ we have that the triple~$(u, \strain, \p)$ minimizes $\E(z, \cdot, \cdot, \cdot)$ in~$\A(w)$. Finally, the lower semicontinuity of~$\J$ implies that $E$ is a solution of~\eqref{e:optimization1.1}--\eqref{e:optimization2.1}.
\end{proof}

\begin{remark}
We remark that the forward problem~\eqref{e:optimization2.1} admits a unique solution for every control~$E \in BV(\Om)$, since~$\E(\mathbf{1}_{E}, \cdot, \cdot, \cdot)$ is strictly convex in the state variables. The same holds for the two phase-field optimization problem we consider below, namely~\eqref{e:optimization2} and~\eqref{e:optimization4}.
\end{remark}

In what follows we focus on a phase-field approximation of the
optimization
problem~\eqref{e:optimization1.1}--\eqref{e:optimization2.1}. 
More precisely,  for $\delta>0$ we consider the target functional
\begin{equation}\label{e:J}
\J_{\delta} (z, u) \coloneq \int_{\Om} \ell(z) f {\, \cdot \,} u \, \di x + \int_{\Gamma_{N}} g { \, \cdot \, } u \, \di \HH^{n-1} +  \int_{\Om} \frac{\delta}{2} | \nabla{z} |^{2} + \frac{ z^{2} (1-z)^{2}}{ 2 \delta} \, \di x
\end{equation}
defined for $z\in H^{1}(\Om) \cap L^{\infty} (\Om)$ and $u \in H^{1} (
\Om ; \R^{n} )$. The first two integrals in~\eqref{e:J} are an
approximation of the compliance appearing in~\eqref{e:J_sharp}. The
last integral, instead, is a typical Modica-Mortola
term~\cite{MR866718, MR0445362}, which, according to the value
of~$\delta$, forces~$z$ to take the values~$0$ or~$1$ and penalizes
the transition between material ($\{ z= 1\}$) and void ($\{ z=0 \}$)
regions. In particular, such a term $\Gamma$-converges to the
perimeter functional~$\mathrm{Per} ( \cdot ;\Om) /6$  in~$\mathcal
J$.  

The {\it phase-field} optimization problem  \eqref{min2} is hence
rewritten as  %we investigate from now on writes as follows:
\begin{eqnarray}
&& \displaystyle \min_{z \in H^{1}(\Om) \cap L^{\infty}(\Om)} \, \J_{\delta} ( z , u ) \,, \label{e:optimization1} \\[1mm]
&& \displaystyle \text{subjected to} \qquad \min_{( u, \strain , \p) \in \A (w)} \, \E (z, u, \strain, \p) \,.  \label{e:optimization2}
\end{eqnarray}

\begin{remark}\label{r:2}
We remark that the usual constraint~$z \in H^{1}(\Om; [0,1])$ has not been explicitly imposed in~\eqref{e:optimization1}. However, if~$z \in H^{1}(\Om) \cap L^{\infty}(\Om)$ is a solution of~\eqref{e:optimization1}--\eqref{e:optimization2}, then also $\overline{z} = 0 \vee z \wedge 1$ is a solution, with the same state configuration. This is due to the assumptions on~$\C$,~$\ha$,~$d$, and~$\ell$ above, which are supposed to be continuously and constantly extended outside the interval~$[0,1]$. 
\end{remark}

 In what follows  we prove the existence of solutions
 for~\eqref{e:optimization1}--\eqref{e:optimization2} and show the
 $\Gamma$-convergence of the phase-field problem to the sharp-interface one 
 as $\delta\to 0$. %The next sections, instead, are devoted to the computation of the first-order optimality conditions for~\eqref{e:optimization1}--\eqref{e:optimization2}.
%We now briefly discuss the existence of solutions to~\eqref{e:optimization1}--\eqref{e:optimization2}.

\begin{proposition}[Existence of optimal phase fields]
\label{p:1}
 Let $f \in L^{2}(\Om;\R^{n})$, $g \in L^{2}(\Gamma_{N}; \R^{n})$, and $w \in H^{1}(\Om; \R^{n})$. Then, there exists a solution $z \in H^{1}(\Om;[0,1])$ to the optimization problem~\eqref{e:optimization1}--\eqref{e:optimization2}.
\end{proposition}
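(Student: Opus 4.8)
The plan is to run the Direct Method, along the lines of the proof of Proposition~\ref{p:1.1}, with the Modica--Mortola term of $\J_{\delta}$ playing the compactness role that the perimeter played there, and with the lower-semicontinuity step, only sketched in Proposition~\ref{p:1.1}, spelled out to account for the fact that the coefficients $\C$, $\ha$, $d$, $\ell$ depend on $z$ and, along a minimizing sequence, converge only almost everywhere. First, for every fixed $z \in H^1(\Om)\cap L^\infty(\Om)$ the forward problem~\eqref{e:optimization2} admits a unique solution $(u_z,\strain_z,\p_z) \in \A(w)$: $\E(z,\cdot,\cdot,\cdot)$ is strictly convex in the state and, by~\eqref{e:1}--\eqref{e:2}, Korn's inequality, and $\HH^{n-1}(\Gamma_D)>0$, coercive on $\A(w)$. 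Testing minimality against the competitor $(w,\e w,0)\in\A(w)$ yields $\E(z,u_z,\strain_z,\p_z)\le\E(z,w,\e w,0)$, whose right-hand side is bounded uniformly for $z\in[0,1]$; hence $(u_z,\strain_z,\p_z)$ is bounded in $H^1(\Om;\R^n)\times L^2(\Om;\M^n_S)\times L^2(\Om;\M^n_D)$ uniformly in $z\in H^1(\Om;[0,1])$.

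By Remark~\ref{r:2} it suffices to find a minimizer within $H^1(\Om;[0,1])$; moreover one may take the minimizing sequence in this class, since the truncation $z\mapsto 0\vee z\wedge 1$ leaves the forward state unchanged and does not increase $\J_{\delta}$. Let then $z_k\in H^1(\Om;[0,1])$ be minimizing. Boundedness of $\J_{\delta}(z_k,u_{z_k})$ controls $\int_\Om\frac{\delta}{2}|\nabla z_k|^2\,\di x$, and $0\le z_k\le 1$ controls the $L^2$-norm, so $z_k$ is bounded in $H^1(\Om)$; along a subsequence, $z_k\rightharpoonup z$ weakly in $H^1(\Om)$, $z_k\to z$ strongly in $L^2(\Om)$ and a.e., with $z\in H^1(\Om;[0,1])$. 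The corresponding states are bounded by the previous step, so along a further subsequence $(u_{z_k},\strain_{z_k},\p_{z_k})\rightharpoonup(\bar u,\bar\strain,\bar\p)$, which lies in $\A(w)$ because the constraints $\e u=\strain+\p$ and $u=w$ on $\Gamma_D$ are weakly closed.

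The key step is to identify $(\bar u,\bar\strain,\bar\p)$ with the forward solution $(u_z,\strain_z,\p_z)$ of $z$. From minimality, $\E(z_k,u_{z_k},\strain_{z_k},\p_{z_k})\le\E(z_k,u_z,\strain_z,\p_z)$, and the right-hand side tends to $\E(z,u_z,\strain_z,\p_z)$ by dominated convergence, the coefficients being continuous, bounded, and evaluated at $z_k\to z$ a.e.\ while $(u_z,\strain_z,\p_z)$ stays fixed. For the left-hand side one shows $\liminf_k\E(z_k,u_{z_k},\strain_{z_k},\p_{z_k})\ge\E(z,\bar u,\bar\strain,\bar\p)$. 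The load terms in fact pass to the limit: $\ell(z_k)f\to\ell(z)f$ strongly in $L^2$ and $u_{z_k}\rightharpoonup\bar u$ weakly in $L^2$, while the trace $H^1(\Om)\to L^2(\Gamma_N)$ is compact, so $u_{z_k}|_{\Gamma_N}\to\bar u|_{\Gamma_N}$ in $L^2(\Gamma_N)$. For the elastic quadratic term, since $\C(z_k)\psi\to\C(z)\psi$ strongly in $L^2$ for every fixed $\psi$, the weak--strong pairing gives $\int_\Om\C(z_k)\strain_{z_k}\cdot\psi\,\di x\to\int_\Om\C(z)\bar\strain\cdot\psi\,\di x$ and $\int_\Om\C(z_k)\psi\cdot\psi\,\di x\to\int_\Om\C(z)\psi\cdot\psi\,\di x$; expanding the nonnegative quantity $\int_\Om\C(z_k)(\strain_{z_k}-\psi)\cdot(\strain_{z_k}-\psi)\,\di x$ and passing to the limit then yields $\liminf_k\int_\Om\C(z_k)\strain_{z_k}\cdot\strain_{z_k}\,\di x\ge 2\int_\Om\C(z)\bar\strain\cdot\psi\,\di x-\int_\Om\C(z)\psi\cdot\psi\,\di x$ for all $\psi\in L^2(\Om;\M^n_S)$, and the choice $\psi=\bar\strain$ gives the desired semicontinuity; the hardening term is handled identically. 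For the dissipation, write $\int_\Om d(z_k)|\p_{z_k}|\,\di x=\int_\Om d(z)|\p_{z_k}|\,\di x+\int_\Om(d(z_k)-d(z))|\p_{z_k}|\,\di x$, where the remainder is $\le\|d(z_k)-d(z)\|_{L^2}\|\p_{z_k}\|_{L^2}\to 0$ and $\p\mapsto\int_\Om d(z)|\p|\,\di x$ is convex and continuous on $L^2$, hence weakly lower semicontinuous. Combining, $\E(z,\bar u,\bar\strain,\bar\p)\le\E(z,u_z,\strain_z,\p_z)$, so by strict convexity $(\bar u,\bar\strain,\bar\p)=(u_z,\strain_z,\p_z)$.

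Finally, one passes to the limit in the cost: the compliance terms of $\J_{\delta}(z_k,u_{z_k})$ converge to those of $\J_{\delta}(z,u_z)$ by the same weak--strong and compact-trace arguments, the gradient term $\int_\Om\frac{\delta}{2}|\nabla z_k|^2\,\di x$ is weakly lower semicontinuous on $H^1(\Om)$ by convexity, and the double-well term converges by dominated convergence. Hence $\J_{\delta}(z,u_z)\le\liminf_k\J_{\delta}(z_k,u_{z_k})$, which equals the infimum in~\eqref{e:optimization1}--\eqref{e:optimization2}, so $z\in H^1(\Om;[0,1])$ is a solution. The main obstacle, and essentially the only point requiring more care than in Proposition~\ref{p:1.1}, is the lower semicontinuity of $\E$ along the minimizing sequence with $z$, $u$, $\strain$, $\p$ all varying while $\C(z_k)$, $\ha(z_k)$, $d(z_k)$ converge merely a.e.; the ``complete-the-square'' device for lower semicontinuity of convex integrands used above is what settles it.
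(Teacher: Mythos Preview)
Your proof is correct and follows exactly the Direct Method strategy the paper uses; the paper's own proof is a two-line pointer back to Proposition~\ref{p:1.1} together with Remark~\ref{r:2} for the truncation to $[0,1]$, and you have simply unpacked in full the lower-semicontinuity of~$\E$ along sequences with varying $z_k$ that the paper invokes without detail. The complete-the-square device you use for the quadratic terms and the splitting for the dissipation are the standard way to make that step rigorous, so nothing here departs from the paper's intended argument.
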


\begin{proof}
 The existence of solutions follows from the same argument of Proposition~\ref{p:1.1}. We only have to notice that, by Remark~\ref{r:2}, we may assume, without loss of generality, that a minimizing sequence~$z_{k} \in H^{1}(\Om) \cap L^{\infty}(\Om)$ satisfies $0 \leq z_{k} \leq 1$ in~$\Om$.
%
%The existence of a solution follows from the Direct Method. Indeed, denoted with $(u_z, \strain_z, \p_z) \in \A(w)$ the unique solution of~\eqref{e:optimization2} for~$z \in H^{1}(\Om) \cap L^{\infty}(\Om)$, we clearly have
%\begin{displaymath}
%\E(z, u_z, \strain_z, \p_z) \leq \E (z, w, 0, 0) \,.
%\end{displaymath}
%From~\eqref{e:1}--\eqref{e:2} and the previous inequality we deduce that $(u_z, \strain_z, \p_z)$ is bounded in~$H^{1}(\Om ; \R^{n}) \times L^{2}(\Om; \M^{n}_{S}) \times L^{2}(\Om ; \M^{n}_{D})$ uniformly w.r.t.~$z$.
%
%Let~$z_{k} \in H^{1}(\Om) \cap L^{\infty}(\Om)$ be  a minimizing sequence. As pointed out in Remark~\ref{r:2}, we may assume, without loss of generality, that $0 \leq z_{k} \leq 1$ in~$\Om$. By definition of~$\J$ in~\eqref{e:J}, we immediately infer that~$z_{k}$ is bounded in~$H^{1}(\Om;[0,1])$, so that, up to a subsequence, $z_{k} \rightharpoonup z$ weakly in~$H^{1}(\Om; [0,1])$. Furthermore, also~$(u_{z_{k}}, \strain_{z_{k}}, \p_{z_{k}})$ admits a weak limit~$(u, \strain, \p)$ in $H^{1}(\Om ; \R^{n}) \times L^{2}(\Om; \M^{n}_{S}) \times L^{2}(\Om; \M^{n}_{D})$, and $(u, \strain, \p) \in \A(w)$.
%
%By lower semicontinuity of~$\E$ we have that the triple~$(u, \strain, \p)$ minimizes $\E(z, \cdot, \cdot, \cdot)$ in~$\A(w)$. Finally, the lower semicontinuity of~$\J$ implies that $z$ is a solution of~\eqref{e:optimization1}--\eqref{e:optimization2}.
\end{proof}

In order to give a compact statement of the $\Gamma$-convergence
result, we introduce \linebreak 
$\G_{\delta} \colon H^{1}(\Om; [0,1]) \times H^{1}(\Om; \R^{n}) \to \R\cup\{+\infty\}$ defined as
\begin{equation}\label{e:G}
\G_{\delta} (z, u) \coloneq \left\{
\begin{array}{ll}
\J_{\delta}(z, u) & \text{if $(u, \strain, \p) \in \A(w)$ is a solution of~\eqref{e:optimization2}}\,,\\[1mm]
+\infty & \text{elsewhere}\,.
\end{array}
\right.
\end{equation}
In a similar way we may define~$\G \colon BV(\Om;[0,1]) \times H^{1}(\Om; \R^{n}) \to \R\cup\{+\infty\}$ by replacing~$\J_{\delta}$ with~$\J$ and~\eqref{e:optimization2} with~\eqref{e:optimization2.1}. 

\begin{remark}
Notice that in the definition~\eqref{e:G} of~$\G_{\delta}$ we have
imposed~$z \in H^{1}(\Om; [0,1])$. As pointed out in Remark~\ref{r:2},
such a requirement is not necessary when we look for minimizers
of~\eqref{e:optimization1}--\eqref{e:optimization2}, as it can be
imposed a posteriori on a solution. However, such a constraint has to
be considered to state a precise $\Gamma$-convergence result, as the
space of sets of finite perimeter is described by characteristic
functions taking values in~$[0,1]$ only.
\end{remark}

\begin{theorem}[Phase-field approximation of the sharp-interface model]
\label{t:gamma_conv}
Let $f \in L^{2}(\Om;\R^{n})$, \linebreak $g \in L^{2}(\Gamma_{N}; \R^{n})$, and $w \in H^{1}(\Om; \R^{n})$. Then, $\G_{\delta}$ $\Gamma$-converges to~$\G$ as $\delta \to 0$ in the $L^{1}$-topology.
\end{theorem}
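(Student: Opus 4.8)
The plan is to prove the $\Gamma$-convergence by establishing the standard liminf inequality and the recovery-sequence (limsup) inequality separately, working in the $L^1(\Om)$-topology for the variable $z$ while carrying along the state variables $(u,\strain,\p)$ as auxiliary data determined by $z$. The backbone is the classical Modica--Mortola $\Gamma$-convergence of the Cahn--Hilliard functional $\int_\Om \frac{\delta}{2}|\nabla z|^2 + \frac{z^2(1-z)^2}{2\delta}\,\di x$ to $\frac16 \mathrm{Per}(\{z=1\};\Om)$ on $L^1(\Om;[0,1])$ (the constant $1/6$ coming from $2\int_0^1 \sqrt{t^2(1-t)^2}\,\di t = 1/3$, halved); this is cited in the excerpt, so I would invoke it directly. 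The extra ingredient is the \emph{continuity} of the compliance part $z \mapsto \int_\Om \ell(z) f\cdot u_z\,\di x + \int_{\Gamma_N} g\cdot u_z\,\di\HH^{n-1}$ along $L^1$-convergent sequences, where $(u_z,\strain_z,\p_z)$ is the unique minimizer of $\E(z,\cdot,\cdot,\cdot)$ in $\A(w)$.

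For the \textbf{liminf inequality}: take $z_\delta \to z$ in $L^1(\Om)$ with $\liminf_\delta \G_\delta(z_\delta,u_\delta) < +\infty$ (otherwise nothing to prove), where $u_\delta$ is the displacement component of the state solving \eqref{e:optimization2} for $z_\delta$. The bound on the Modica--Mortola term forces $z_\delta$ bounded in $BV$ and, after passing to a non-relabelled subsequence realizing the liminf, $z \in BV(\Om;\{0,1\})$. By the energy comparison $\E(z_\delta,u_\delta,\strain_\delta,\p_\delta) \leq \E(z_\delta, w, \e w, 0)$ together with \eqref{e:1}--\eqref{e:2}, the states $(u_\delta,\strain_\delta,\p_\delta)$ are bounded uniformly in $H^1\times L^2\times L^2$, so up to a further subsequence they converge weakly to some $(u,\strain,\p)\in\A(w)$. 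One then checks, using lower semicontinuity of $\E$ in the state variables and a standard argument (the energy inequality passes to the limit, and strict convexity plus uniqueness identify the limit), that $(u,\strain,\p)$ is the minimizer for the limit $z$, i.e.\ it solves \eqref{e:optimization2.1}. The compliance terms pass to the limit by weak convergence of $u_\delta$ in $H^1$ (hence strong in $L^2(\Om)$ and in $L^2(\Gamma_N)$ by compact trace embedding) combined with $\ell(z_\delta)\to\ell(z)$ in, say, every $L^p$ by dominated convergence (the $\ell$ are bounded and continuous, $z_\delta\to z$ a.e.\ along a subsequence). The Modica--Mortola term is handled by the cited lower bound, giving $\liminf_\delta \int_\Om(\tfrac{\delta}{2}|\nabla z_\delta|^2 + \tfrac{z_\delta^2(1-z_\delta)^2}{2\delta})\,\di x \geq \tfrac16\mathrm{Per}(\{z=1\};\Om)$. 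Adding these yields $\liminf_\delta \G_\delta(z_\delta,u_\delta) \geq \G(z,u)$.

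For the \textbf{limsup inequality}: given $z\in BV(\Om;\{0,1\})$ with $\G(z,u)<+\infty$ (so $(u,\strain,\p)$ solves \eqref{e:optimization2.1}), I would take the standard Modica--Mortola optimal profile recovery sequence $z_\delta \to z$ in $L^1(\Om)$ with $0\leq z_\delta\leq 1$ and $\int_\Om(\tfrac{\delta}{2}|\nabla z_\delta|^2 + \tfrac{z_\delta^2(1-z_\delta)^2}{2\delta})\,\di x \to \tfrac16\mathrm{Per}(\{z=1\};\Om)$ — this is the classical construction, which produces $z_\delta\in H^1(\Om;[0,1])$. Let $(u_\delta,\strain_\delta,\p_\delta)$ be the associated state solutions. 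By the same uniform a priori bound as above and a subsequence argument, they converge weakly to the state for $z$, which by uniqueness is $(u,\strain,\p)$; hence the compliance terms of $\J_\delta(z_\delta,u_\delta)$ converge to those of $\J(z,u)$. Combining with the convergence of the Modica--Mortola term gives $\limsup_\delta \G_\delta(z_\delta,u_\delta) \leq \G(z,u)$, completing the proof. (One should note that the recovery sequence works verbatim for the compliance part precisely because it only enters continuously through the control-to-state map, so no modification of the Modica--Mortola construction is needed.)

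The step I expect to be the main obstacle — or at least the one requiring genuine care rather than citation — is the \emph{stability of the control-to-state map}: showing that if $z_\delta\to z$ in $L^1$ (with uniform bounds) then the minimizers of $\E(z_\delta,\cdot,\cdot,\cdot)$ converge weakly to the minimizer of $\E(z,\cdot,\cdot,\cdot)$ in $\A(w)$, and in particular that the displacement traces converge strongly enough in $L^2(\Gamma_N)$ to make the compliance terms converge. This requires a careful $\liminf$/$\limsup$ comparison of energies along the sequence: the $\liminf$ of $\E(z_\delta,u_\delta,\strain_\delta,\p_\delta)$ is $\geq \E(z,u,\strain,\p)$ by weak lower semicontinuity once we know $\C(z_\delta)\to\C(z)$, $\ha(z_\delta)\to\ha(z)$, $d(z_\delta)\to d(z)$, and $\ell(z_\delta)\to\ell(z)$ in suitable strong topologies (which follows from a.e.\ convergence and boundedness of the coefficients along a subsequence, invoking e.g.\ a lower semicontinuity theorem for integrands of the form $A(x)\xi\cdot\xi$ with $A_\delta\to A$ strongly and $\xi_\delta\rightharpoonup\xi$), while the $\limsup$ is controlled by testing $\E(z_\delta,\cdot)$ against the limit state $(u,\strain,\p)\in\A(w)$ and passing to the limit in the (now fixed) test configuration; together with strict convexity this pins down the limit and upgrades weak to strong convergence of $\strain_\delta$ and $\p_\delta$, hence strong $H^1$-convergence of $u_\delta$. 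This is routine but must be written with attention to which coefficient convergences are needed where; everything else reduces to the cited Modica--Mortola result and the lower semicontinuity of $\E$ and of the perimeter.
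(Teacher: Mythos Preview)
Your proposal is correct and follows essentially the same approach as the paper: split into the Modica--Mortola part (handled by citation) and the compliance part (handled by stability of the control-to-state map under $L^1$-convergence of $z$), establishing the $\Gamma$-liminf and the recovery sequence separately. The paper's proof is terser, deferring the control-to-state stability to ``following the steps of the proof of Proposition~\ref{p:1.1}'', whereas you spell out the uniform a priori bound, weak compactness, and the lower/upper energy comparison that identifies the weak limit of the states with the unique minimizer for the limiting~$z$; this is exactly the content hidden behind that cross-reference.
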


\begin{proof}
We start by proving the $\Gamma$-liminf inequality. Let
$\delta_{k} \to 0$, $E \in BV(\Om)$, and $z_{k} \in H^{1}(\Om
; [0,1])$ be such that $z_{k} \to \mathbf{1}_{E}$ in
$L^{1}(\Om)$. Let us denote by $(u_{E},
\strain_{E}, \p_{E}), (u_{z_{k}}, \strain_{z_{k}}, \p_{z_{k}}) \in
\A(w)$ the corresponding state variables, solutions
of~\eqref{e:optimization2.1} and~\eqref{e:optimization2},
respectively. Following the steps of the proof of
Proposition~\ref{p:1.1}, we get that $(u_{z_{k}}, \strain_{z_{k}},
\p_{z_{k}}) \rightharpoonup  (u_{E}, \strain_{E}, \p_{E})$ weakly in $H^{1}(\Om;\R^{n}) \times L^{2}(\Om; \M^{n}_{S}) \times L^{2} (\Om; \M^{n}_{D})$. Furthermore, from the $\Gamma$-convergence of the Modica-Mortola term to the perimeter functional~\cite{MR866718, MR0445362} we deduce
\begin{displaymath}
\G(E, u_{E}) \leq \liminf_{k\to\infty} \, \G_{\delta_{k}} ( z_{k}, u_{z_{k}})\,.
\end{displaymath}

As for the $\Gamma$-limsup inequality, for every $E \in BV(\Om)$ we
consider the recovery sequence $z_{k} \in H^{1}(\Om; [0,1])$
constructed in~\cite{MR866718, MR0445362} and such that $z_{k} \to
\mathbf{1}_{E}$ in~$L^{1}(\Om)$. Again, let $(u_{E}, \strain_{E},
\p_{E}),\, (u_{z_{k}}$, $\strain_{z_{k}}, \p_{z_{k}}) \in \A(w)$ be the corresponding state variables. It is easy to check that $(u_{z_{k}},
\strain_{z_{k}}, \p_{z_{k}}) \rightharpoonup   (u_{E}, \strain_{E}, \p_{E})$
weakly in $H^{1}(\Om;\R^{n}) \times L^{2}(\Om; \M^{n}_{S}) \times L^{2} (\Om; \M^{n}_{D})$ and
\begin{equation}\label{e:s1}
\limsup_{k \to \infty} \, \G_{\delta_{k}} ( z_{k}, u_{z_{k}}) \leq \G(E, u_{E})\,.
\end{equation}
This concludes the proof of the theorem.
\end{proof}

\begin{corollary}[Convergence of optimal controls]\label{cor}
Under the assumptions of Theorem~\emph{\ref{t:gamma_conv}}, every sequence~$z_{\delta} \in H^{1}(\Om; [0,1])$ of solutions to~\eqref{e:optimization1}--\eqref{e:optimization2} admits, up to a subsequence, a limit~$\mathbf{1}_{E}$ in~$L^{1}(\Om)$, where $E \in BV(\Om)$ is a solution to~\eqref{e:optimization1.1}--\eqref{e:optimization2.1}.
\end{corollary}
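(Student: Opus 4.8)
The plan is to derive the statement from Theorem~\ref{t:gamma_conv} by the usual ``$\Gamma$-convergence plus equi-coercivity yields convergence of minimizers'' argument, the only nontrivial ingredient being the coercivity. First I would recast the phase-field problem in terms of the functional $\G_{\delta}$ of~\eqref{e:G}: to each solution $z_{\delta}$ of~\eqref{e:optimization1}--\eqref{e:optimization2} we associate its unique state $(u_{z_{\delta}}, \strain_{z_{\delta}}, \p_{z_{\delta}}) \in \A(w)$ solving~\eqref{e:optimization2}, so that the pair $(z_{\delta}, u_{z_{\delta}})$ is a minimizer of $\G_{\delta}$ over $H^{1}(\Om;[0,1]) \times H^{1}(\Om;\R^{n})$; likewise, a solution of~\eqref{e:optimization1.1}--\eqref{e:optimization2.1} is the same as a minimizer of $\G$.

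The key preliminary step is equi-coercivity in $L^{1}(\Om)$. Using the constant field $z \equiv 1$ (with state $(u_{1}, \strain_{1}, \p_{1})$) as a competitor, minimality gives
\[
\G_{\delta}(z_{\delta}, u_{z_{\delta}}) \leq \G_{\delta}(1, u_{1}) = \int_{\Om} \ell(1) f \cdot u_{1} \, \di x + \int_{\Gamma_{N}} g \cdot u_{1} \, \di \HH^{n-1} \eqcolon C \,,
\]
a constant independent of $\delta$, since the Modica--Mortola part of $\J_{\delta}(1, u_{1})$ vanishes ($\nabla 1 = 0$ and $z^{2}(1-z)^{2} = 0$ at $z = 1$). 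On the other hand, exactly as in the proof of Proposition~\ref{p:1.1}, comparing $\E(z_{\delta}, u_{z_{\delta}}, \strain_{z_{\delta}}, \p_{z_{\delta}})$ with $\E(z_{\delta}, w, \e w, 0)$ and using~\eqref{e:1}--\eqref{e:2} shows that $u_{z_{\delta}}$ is bounded in $H^{1}(\Om;\R^{n})$ uniformly in $\delta$; hence the compliance contribution $\int_{\Om} \ell(z_{\delta}) f \cdot u_{z_{\delta}} \, \di x + \int_{\Gamma_{N}} g \cdot u_{z_{\delta}} \, \di \HH^{n-1}$ to $\J_{\delta}(z_{\delta}, u_{z_{\delta}})$ is bounded in absolute value, uniformly in $\delta$. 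Subtracting, the Modica--Mortola term $\int_{\Om} \bigl( \tfrac{\delta}{2} | \nabla z_{\delta} |^{2} + \tfrac{z_{\delta}^{2}(1-z_{\delta})^{2}}{2\delta} \bigr) \, \di x$ stays bounded uniformly in $\delta$, and the classical compactness theorem for this functional~\cite{MR866718, MR0445362} yields, up to a subsequence, $z_{\delta} \to \mathbf{1}_{E}$ in $L^{1}(\Om)$ with $E \in BV(\Om)$.

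It then remains to show that $E$ minimizes $\G$. By the $\Gamma$-$\liminf$ inequality of Theorem~\ref{t:gamma_conv} --- whose proof moreover provides the weak convergence $(u_{z_{\delta}}, \strain_{z_{\delta}}, \p_{z_{\delta}}) \rightharpoonup (u_{E}, \strain_{E}, \p_{E})$, so that $(u_{E}, \strain_{E}, \p_{E})$ is the solution of~\eqref{e:optimization2.1} and $\G(E, u_{E}) = \J(E, u_{E}) < +\infty$ --- we get $\G(E, u_{E}) \leq \liminf_{\delta \to 0} \G_{\delta}(z_{\delta}, u_{z_{\delta}})$. Given any competitor $E' \in BV(\Om)$, let $z'_{\delta} \to \mathbf{1}_{E'}$ be the recovery sequence of Theorem~\ref{t:gamma_conv}, with state $u_{z'_{\delta}}$; combining the previous inequality with the minimality of $(z_{\delta}, u_{z_{\delta}})$ and the $\Gamma$-$\limsup$ inequality gives
\[
\G(E, u_{E}) \leq \liminf_{\delta \to 0} \G_{\delta}(z_{\delta}, u_{z_{\delta}}) \leq \limsup_{\delta \to 0} \G_{\delta}(z'_{\delta}, u_{z'_{\delta}}) \leq \G(E', u_{E'}) \,.
\]
Since $E'$ was arbitrary, $E$ is a minimizer of $\G$, i.e.\ a solution of~\eqref{e:optimization1.1}--\eqref{e:optimization2.1}, which is the claim. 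The one point that is not a purely mechanical application of abstract $\Gamma$-convergence is the equi-coercivity: because the compliance terms are not sign-definite, one genuinely needs the uniform $H^{1}$-bound on the states to isolate and control the Modica--Mortola energy before the compactness theorem can be invoked; everything else is routine.
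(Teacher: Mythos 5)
Your argument is correct and follows the same route as the paper, whose proof of this corollary is a one-line appeal to the $\Gamma$-convergence of Theorem~\ref{t:gamma_conv} together with the standard fact that $\Gamma$-convergence plus equi-coercivity yields convergence of minimizers. You additionally spell out the equi-coercivity step (comparison with $z\equiv 1$, the uniform $H^{1}$-bound on the states to control the sign-indefinite compliance terms, and the Modica--Mortola compactness theorem), which the paper leaves implicit; this is exactly the missing detail one would want to see, and it is handled correctly.
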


\begin{proof}
The thesis follows by the $\Gamma$-convergence shown in Theorem~\ref{t:gamma_conv}.
\end{proof}

In the rest of the paper we focus on the first-order optimality conditions for~\eqref{e:optimization1}--\eqref{e:optimization2}. In this respect, we notice that the presence of the positively $1$-homogeneous plastic dissipation prevents us from deducing suitable optimality conditions by operating directly on~\eqref{e:optimization1}--\eqref{e:optimization2}. For this reason, we introduce a regularization of the energy functional~\eqref{e:energy} depending on a parameter~$\gamma>0$. Namely, for every  $z \in L^{\infty}(\Om)$,  every $(u, \strain, \p) \in \A(w)$, and every $\gamma \in (0, +\infty)$, we define
\begin{equation}\label{e:energygamma}
\begin{split}
\E_{\gamma} (z, u, \strain, \p) \coloneq & \  \frac{1}{2} \int_{\Om} \C(z) \strain {\, \cdot \,} \strain \, \di x + \frac{1}{2} \int_{\Om} \ha(z) \p{\,\cdot \,} \p \, \di x + \int_{\Om} d(z) \left( \sqrt{ | \p |^{2} + \frac{1}{\gamma^{2}}} - \frac{1}{\gamma} \right) \, \di x
\\
&
 - \int_{\Om}  \ell(z)  f {\, \cdot \, } u \, \di x - \int_{\Gamma_{N}} g {\, \cdot\,} u \, \di \HH^{n-1} \,.
 \end{split}
\end{equation} 
For later use, we set
\begin{displaymath}
h_{\gamma} (\q) \coloneq  \sqrt{|\boldsymbol{\q}|^{2} + \frac{1}{\gamma^{2}}} - \frac{1}{\gamma} \qquad \text{for every $\boldsymbol{\q} \in \M^{n}_{D}$},
\end{displaymath}
and notice that~$h_{\gamma} \in C^{\infty} (\M^{n}_{D})$ is convex and such that 
\begin{eqnarray}
&& \displaystyle h_{\gamma}(\mathrm{0}) = 0 \,, \qquad |\boldsymbol{\q}| - \frac{1}{\gamma} \leq h_{\gamma} (\boldsymbol{\q}) \leq |\boldsymbol{\q} | \,, \label{e:h1} \\[1mm]
&& \displaystyle \vphantom{\frac12}| h_{\gamma} (\boldsymbol{\q}_{1})  - h_{\gamma} (\boldsymbol{\q}_{2})| \leq  | \boldsymbol{\q}_{1} - \boldsymbol{\q}_{2}|  \,, \label{e:h2} \\ [1mm]
&& \displaystyle \vphantom{\frac12}| \nabla_{\boldsymbol{\q}} h_{\gamma} (\boldsymbol{\q}_{1}) - \nabla_{\boldsymbol{\q}} h_{\gamma}( \boldsymbol{\q}) | \leq 2 \gamma | \boldsymbol{\q}_{1} - \boldsymbol{\q}_{2} |  \label{e:h3}
\end{eqnarray}
for every $\gamma \in (0,+\infty)$ and every $\boldsymbol{\q}, \boldsymbol{\q}_{1}, \boldsymbol{\q}_{2} \in \M^{n}_{D}$.

We now consider the regularized optimization problem
\begin{eqnarray}
&& \displaystyle \min_{z \in H^{1}(\Om) \cap L^{\infty}(\Om)} \, \J_{\delta}(z, u) \,, \label{e:optimization3} \\[1mm]
&& \displaystyle \text{subjected to} \qquad \min_{( u, \epsilon , \p) \in \A(w)} \, \E_{\gamma} (z, u, \strain, \p) \,.  \label{e:optimization4}
\end{eqnarray}
We notice that the minimization problem~\eqref{e:optimization3}--\eqref{e:optimization4} is expected to be regular w.r.t.~$z$ (see Theorem~\ref{t:regoptim}) as the regularization of the plastic dissipation
\begin{displaymath}
\int_{\Om} d(z) h_{\gamma}(\p) \, \di x 
\end{displaymath}
is differentiable w.r.t.~$\p \in L^{2}(\Om; \M^{n}_{D})$.

In the following proposition we state the existence of solutions of~\eqref{e:optimization3}--\eqref{e:optimization4} and the convergence as~$\gamma\to +\infty$ to solutions of~\eqref{e:optimization1}--\eqref{e:optimization2}.

\begin{proposition}[Existence and convergence of regularized optimal controls]
\label{p:2}  
Assume to be \linebreak given $f\in L^{2}(\Om; \R^{n})$, $g \in L^{2}(\Gamma_{N}; \R^{n})$, and $w \in H^{1}(\Om ; \R^{n})$. Then, for every $\gamma \in (0,+\infty)$ there exists a solution $z_{\gamma} \in H^{1}( \Om ; [0,1])$ to~\eqref{e:optimization3}--\eqref{e:optimization4}.

Moreover, every sequence~$z_{\gamma}$ of solutions
of~\eqref{e:optimization3}--\eqref{e:optimization4} admits a
subsequence which is weakly convergent to some~$z \in H^{1}(\Om ;
[0,1])$ solving~\eqref{e:optimization1}--\eqref{e:optimization2}, and
the corresponding state configuration~$(u_{\gamma}, \strain_{\gamma},
\p_{\gamma})$ converges in~$H^{1}(\Om; \R^{n}) \times L^{2}(\Om;
\M^{n}_{S}) \times L^{2}(\Om; \M^{n}_{D})$ to~$(u, \strain, \p)$ solving~\eqref{e:optimization2}.
\end{proposition}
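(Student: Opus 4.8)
The plan is to prove Proposition~\ref{p:2} in two movements: first the existence of $z_\gamma$ for each fixed $\gamma$, then the convergence of the family $\{z_\gamma\}$ as $\gamma\to+\infty$.

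\medskip

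\noindent\textbf{Existence for fixed $\gamma$.}
I would mirror the Direct Method argument already used in Propositions~\ref{p:1.1} and~\ref{p:1}. For fixed $z$, the functional $\E_\gamma(z,\cdot,\cdot,\cdot)$ is still strictly convex in the state variables and, by~\eqref{e:h1}, differs from $\E(z,\cdot,\cdot,\cdot)$ only by a term bounded in $L^\infty(\Om)$ in absolute value by $\mathcal{L}^n(\Om)\,\beta/\gamma$; hence the coercivity estimates coming from~\eqref{e:1}--\eqref{e:2} (combined with Korn's and trace inequalities, exactly as in the proof of Proposition~\ref{p:1.1}) give a bound on the unique minimizer $(u_{z},\strain_{z},\p_{z})\in\A(w)$ that is uniform in both $z\in H^1(\Om;[0,1])$ and $\gamma\ge 1$ (for $\gamma$ small one simply keeps $\gamma$ fixed). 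Take a minimizing sequence $z_k$ for~\eqref{e:optimization3}--\eqref{e:optimization4}; by Remark~\ref{r:2} we may assume $0\le z_k\le 1$, so $\{z_k\}$ is bounded in $H^1(\Om)$ by the Modica--Mortola gradient term in $\J_\delta$, and up to a subsequence $z_k\rightharpoonup z_\gamma$ in $H^1(\Om)$, strongly in $L^p(\Om)$ for all $p<\infty$ and a.e. The associated states converge weakly in $H^1\times L^2\times L^2$ to some $(u,\strain,\p)\in\A(w)$ (the constraint $\e u=\strain+\p$ and $u=w$ on $\Gamma_D$ passing to the weak limit). One then checks that $(u,\strain,\p)$ is the minimizer of $\E_\gamma(z_\gamma,\cdot,\cdot,\cdot)$: for an arbitrary competitor $(v,\eta,\q)\in\A(w)$ one uses $\E_\gamma(z_k,u_{z_k},\strain_{z_k},\p_{z_k})\le\E_\gamma(z_k,v,\eta,\q)$, passing to the $\liminf$ on the left via convexity and weak lower semicontinuity (the coefficients $\C(z_k),\ha(z_k),d(z_k)$ converging a.e. and boundedly, $h_\gamma$ continuous) and to the limit on the right by dominated convergence. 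Finally weak lower semicontinuity of $\J_\delta$ (convexity of $|\nabla z|^2$, strong $L^1$ convergence of the double-well term) shows $z_\gamma$ is optimal.

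\medskip

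\noindent\textbf{Convergence as $\gamma\to+\infty$.}
Let $z_\gamma$ be optimal for~\eqref{e:optimization3}--\eqref{e:optimization4} with states $(u_\gamma,\strain_\gamma,\p_\gamma)$. As above, after truncation we may assume $0\le z_\gamma\le 1$; comparing $\J_\delta(z_\gamma,u_\gamma)$ with the value at any fixed admissible competitor (e.g. the solution associated to a fixed $\bar z$), and using the uniform state bounds, we get $\{z_\gamma\}$ bounded in $H^1(\Om)$ and $\{(u_\gamma,\strain_\gamma,\p_\gamma)\}$ bounded in $H^1\times L^2\times L^2$. Extract $z_\gamma\rightharpoonup z$ in $H^1$ (hence in $L^p$, a.e.) and $(u_\gamma,\strain_\gamma,\p_\gamma)\rightharpoonup(u,\strain,\p)\in\A(w)$. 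The crucial point is to identify $(u,\strain,\p)$ as the solution of the \emph{unregularized} problem~\eqref{e:optimization2} for the control $z$. For this I would use the Euler--Lagrange / minimality inequality and a $\liminf$ argument: for any competitor $(v,\eta,\q)\in\A(w)$,
\begin{align*}
\E(z,u,\strain,\p)
&\le\liminf_{\gamma\to\infty}\E_\gamma(z_\gamma,u_\gamma,\strain_\gamma,\p_\gamma)
\le\limsup_{\gamma\to\infty}\E_\gamma(z_\gamma,v,\eta,\q)
=\E(z,v,\eta,\q),
\end{align*}
where the first inequality combines weak lower semicontinuity of the quadratic elastic and hardening terms, weak lower semicontinuity of $\q\mapsto\int d(z)|\q|$, the a.e.\ convergence of the coefficients, and the bound $h_\gamma(\q)\ge|\q|-1/\gamma$ from~\eqref{e:h1}; the last equality uses $h_\gamma(\q)\to|\q|$ uniformly (again~\eqref{e:h1}, giving $\int d(z_\gamma)h_\gamma(\q)\to\int d(z)|\q|$) and dominated convergence in the linear terms. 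By strict convexity, $(u,\strain,\p)$ is then \emph{the} solution of~\eqref{e:optimization2} for $z$. Upgrading the weak convergence of the states to strong convergence is then standard: test the minimality of $(u_\gamma,\ldots)$ for $\E_\gamma(z_\gamma,\cdot)$ against $(u,\strain,\p)$, obtain $\limsup_\gamma\E_\gamma(z_\gamma,u_\gamma,\strain_\gamma,\p_\gamma)\le\E(z,u,\strain,\p)$, hence equality of energies; the quadratic part of $\E_\gamma$ being uniformly convex in $(\strain,\p)$ (by~\eqref{e:1}--\eqref{e:2}) forces $\strain_\gamma\to\strain$, $\p_\gamma\to\p$ in $L^2$, and then $\e u_\gamma=\strain_\gamma+\p_\gamma\to\strain+\p=\e u$ in $L^2$ together with $u_\gamma=w$ on $\Gamma_D$ and Korn's inequality give $u_\gamma\to u$ in $H^1$.

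\medskip

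\noindent\textbf{Optimality of the limit control.}
It remains to show $z$ solves~\eqref{e:optimization1}--\eqref{e:optimization2}. Lower semicontinuity gives $\J_\delta(z,u)\le\liminf_\gamma\J_\delta(z_\gamma,u_\gamma)$. For the reverse bound against an arbitrary competitor $\tilde z\in H^1(\Om;[0,1])$ with state $(\tilde u,\tilde\strain,\tilde\p)$ solving~\eqref{e:optimization2}, I build a recovery sequence for the $\gamma$-problem: keep $\tilde z$ fixed and let $(\tilde u_\gamma,\tilde\strain_\gamma,\tilde\p_\gamma)$ be the solution of~\eqref{e:optimization4} with control $\tilde z$. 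By the same strong-convergence argument as above, $(\tilde u_\gamma,\tilde\strain_\gamma,\tilde\p_\gamma)\to(\tilde u,\tilde\strain,\tilde\p)$, so $\J_\delta(\tilde z,\tilde u_\gamma)\to\J_\delta(\tilde z,\tilde u)$ (continuity of the compliance terms under strong $H^1$ convergence of $u$, the Modica--Mortola part not involving $\gamma$). Then
\begin{align*}
\J_\delta(z,u)\le\liminf_{\gamma\to\infty}\J_\delta(z_\gamma,u_\gamma)
\le\limsup_{\gamma\to\infty}\J_\delta(\tilde z,\tilde u_\gamma)=\J_\delta(\tilde z,\tilde u),
\end{align*}
using optimality of $z_\gamma$ in the middle inequality. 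Since $\tilde z$ was arbitrary, $z$ is optimal, and Remark~\ref{r:2} lets us assume $z\in H^1(\Om;[0,1])$.

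\medskip

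\noindent\textbf{Main obstacle.}
The only genuinely delicate point is the identification of the limit state as the solution of the \emph{unregularized} state problem: one must simultaneously pass to the limit in $\gamma$ (in the non-smooth dissipation, via $h_\gamma\to|\cdot|$) and in $z_\gamma$ (in the non-constant, nonlinearly-appearing coefficients $\C(z_\gamma),\ha(z_\gamma),d(z_\gamma),\ell(z_\gamma)$). The comparison-inequality formulation above handles this cleanly because it never requires passing to the limit in an Euler--Lagrange equation; the a.e.\ convergence of $z_\gamma$ together with the uniform bounds $\alpha\le d(z_\gamma),\mu(z_\gamma),\ldots\le\beta$ make all the coefficient convergences routine applications of dominated convergence. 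Once the limit state is identified, everything else is the Direct Method.
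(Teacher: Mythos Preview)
Your proof is correct and follows essentially the same approach as the paper: Direct Method for existence, then the sandwich inequality coming from~\eqref{e:h1} (the paper writes it as $\E(z_\gamma,\cdot)-M_d|\Om|/\gamma\le\E_\gamma(z_\gamma,\cdot)\le\E(z_\gamma,\cdot)$) to identify the limit state, followed by energy convergence to upgrade weak to strong convergence of the states. One remark: the paper's own proof actually omits the step you carry out under ``Optimality of the limit control''---it stops after identifying $(u,\strain,\p)$ as the minimizer of $\E(z,\cdot)$ and proving strong convergence of the states, without explicitly verifying that $z$ solves~\eqref{e:optimization1}--\eqref{e:optimization2}; your recovery-sequence argument (keep $\tilde z$ fixed, let the $\gamma$-states converge) is the natural way to close this and makes your write-up more complete than the paper's.
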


\begin{proof}
The existence of solutions of~\eqref{e:optimization3}--\eqref{e:optimization4} can be shown as in Propositions~\ref{p:1.1} and~\ref{p:1}.

Let us prove the second part of the statement. Let $z_{\gamma} \in H^{1}(\Om; [0,1])$ be a sequence of solutions of~\eqref{e:optimization3}--\eqref{e:optimization4} for $\gamma \in (0,+\infty)$, and let us denote with~$(u_{\gamma}, \strain_{\gamma}, \p_{\gamma})$ the corresponding solution of the forward problem~\eqref{e:optimization4}. Then, we have that
\begin{displaymath}
\E_{\gamma} (z_{\gamma}, u_{\gamma}, \strain_{\gamma}, \p_{\gamma}) \leq \E_{\gamma} (z_{\gamma}, w, \e w , 0) \leq \beta_{\C} \| w\|_{H^{1}}^{2} + ( \| f\|_{2} + \| g \|_{2})  \|w\|_{H^{1}} \,.
\end{displaymath} 
Hence, $(u_{\gamma}, \strain_{\gamma}, \p_{\gamma})$ is bounded in~$H^{1}(\Om; \R^{n}) \times L^{2}(\Om ; \M^{n}_{S}) \times L^{2}(\Om; \M^{n}_{D})$ independently of~$\gamma$ and admits, up to a subsequence, a weak limit $(u, \strain, \p) \in \A(w)$. By optimality of~$z_{\gamma}$ we also infer that~$z_{\gamma}$ is bounded in~$H^{1}(\Om ; [0,1])$ and, up to a further subsequence,~$z_{\gamma} \rightharpoonup z$ weakly in~$H^{1}(\Om; [0,1])$.

Let us show that~$(u, \strain, \p) \in \A(w)$ is the minimizer of~$\E(z, \cdot, \cdot, \cdot)$ in~$\A(w)$. Thanks to~\eqref{e:h1} we have that for every $(v, \eeta, \qq) \in \A(w)$ it holds
\begin{equation}\label{e:4}
\E ( z_{\gamma}, u_{\gamma}, \strain_{\gamma}, \p_{\gamma}) - \frac{M_d}{\gamma} |\Om|  \leq \E_{\gamma}( z_{\gamma}, u_{\gamma}, \strain_{\gamma}, \p_{\gamma}) \leq \E_{\gamma}( z_{\gamma}, v, \eeta, \qq) \leq \E ( z_{\gamma}, v, \eeta , \qq) \,,
\end{equation}
where we have set $M_{d} \coloneq \max \{ d( z ) : \,  z  \in[0,1]\}$. Passing to the liminf as~$\gamma \to +\infty$ on both sides of~\eqref{e:4} we get
\begin{displaymath}
\E(z, u, \strain, \p) \leq \E(z, v, \eeta, \qq) \qquad \text{for every $(v, \eeta, \qq) \in \A(w)$}\,.
\end{displaymath}
Hence, $(u, \strain , \p) \in \A(w)$ is a minimizer of~$\E(z, \cdot, \cdot, \cdot)$ in~$\A(w)$. Moreover, from the argument above we deduce that $\E(z_{\gamma}, u_{\gamma}, \strain_{\gamma}, \p_{\gamma})$ converges to $\E(z, u, \strain, \p)$, which implies the strong convergence of $(u_{\gamma}, \strain_{\gamma}, \p_{\gamma})$ to $(u, \strain, \p)$ in~$H^{1}(\Om ; \R^{n}) \times L^{2}(\Om; \M^{n}_{S}) \times L^{2}(\Om; \M^{n}_{D})$.
\end{proof}

\begin{remark}
We remark that the solutions of~\eqref{e:optimization1}--\eqref{e:optimization2} and~\eqref{e:optimization3}--\eqref{e:optimization4} are not unique, since the functional~$\J_{\delta}$ is not convex. In particular, we can not ensure that all the solutions of~\eqref{e:optimization1}--\eqref{e:optimization2} can be approximated by solutions of~\eqref{e:optimization3}--\eqref{e:optimization4}. 

Since in the next sections we are going to deduce the optimality
conditions for~\eqref{e:optimization1}--\eqref{e:optimization2} as
limit of those for~\eqref{e:optimization3}--\eqref{e:optimization4}, these will be valid only for a subclass of solutions of~\eqref{e:optimization1}--\eqref{e:optimization2}. Clearly, if~\eqref{e:optimization1}--\eqref{e:optimization2} admits a unique solution, in view of Proposition~\ref{p:2} it can be approximated by solutions of~\eqref{e:optimization3}--\eqref{e:optimization4}, and its first order optimality conditions follows from Theorems~\ref{t:regoptim} and~\ref{t:lim_optim}.
\end{remark}

%%%%%%%%%%%%%%%%%%%%%%%%%%%%%%%%%%%%%%%

\section{Optimality of the regularized problem}
\label{s:reg}

This section is devoted to the computation of the first-order optimality conditions for the regularized problem~\eqref{e:optimization3}--\eqref{e:optimization4}. In particular, we follow here the main lines of~\cite{delosReyes}.

In order to state the main result of this section, we need some additional notation. For $\gamma \in (0,+\infty)$, $z, \varphi \in L^{\infty}(\Om )$, and $( u , \strain, \p), (v, \eeta, \qq) \in H^{1}(\Om ; \R^{n}) \times L^{2}(\Om ; \M^{n}_{S}) \times L^{2}(\Om ; \M^{n}_{D})$, we set
 \begin{align}\label{e:F}
\F^{(u, \strain, \p)}_{\gamma} (v, \eeta, \qq) \coloneq & \ \frac{1}{2} \int_{\Om} \C(z) \eeta{\, \cdot\, }\eeta \, \di x + \frac{1}{2} \int_{\Om} \ha(z) \qq{ \, \cdot \,} \qq \, \di x + \int_{\Om} ( \C'(z) \varphi) \strain {\, \cdot\, } \eeta \, \di x 
\\
&
+ \int_{\Om} (\ha'(z) \varphi) \p {\, \cdot\,} \qq \, \di x + \int_{\Om} \varphi \, d'(z)  \nabla_{\q} h_{\gamma}(\p)  {\, \cdot\,} \qq \, \di x \nonumber
\\
&
+ \int_{\Om} d(z) \big( \nabla^{2}_{\q} h_{\gamma} (\p) \qq\big){ \, \cdot \,} \qq \, \di x  - \int_{\Om} \varphi\, \ell'(z)  \, f {\,\cdot\,} v \, \di x\,. \nonumber %- \int_{\Om} f {\, \cdot \,}  v \, \di x - \int_{\Gamma_{N}} g { \, \cdot\,} v \, \di \HH^{n-1} \,.
\end{align}  

\begin{theorem}[Differentiability of the control-to-state map]
\label{t:regoptim}
Let $p\in (2, +\infty)$, $f\in L^{p}(\Om; \R^{n})$, $g \in L^{p}(\Gamma_{N}; \R^{n})$, $w\in W^{1, p}( \Om ; \R^{n})$, and $\gamma \in (0,+\infty)$. Then, the control-to-state operator $S_{\gamma} \colon L^{\infty} ( \Om ) \to \A(w)$ defined as
\begin{displaymath}
S_{\gamma} (z) \coloneq \argmin \, \{ \E_{\gamma} (z, u, \strain, \p) : \,( u, \strain , \p) \in \A(w) \}
\end{displaymath} 
is Frech\'et differentiable. If $(u_{\gamma}, \strain_{\gamma}, \p_{\gamma}) = S_{\gamma}(z)$, the derivative of~$S_{\gamma}$ in the direction~$\varphi \in L^{\infty}(\Om)$ is given by 
\begin{equation}\label{e:der}
S'_{\gamma}(z) [\varphi] =  \argmin \, \{ \F^{(u_{\gamma}, \strain_{\gamma}, \p_{\gamma})}_{\gamma} (v, \eeta, \qq): \, (v, \eeta, \qq) \in \A(0) \} \,. 
\end{equation}
\end{theorem}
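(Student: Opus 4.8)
The plan is to characterise $S_\gamma(z)$ through the Euler--Lagrange equation of the strictly convex functional $\E_\gamma(z,\cdot)$, to establish Lipschitz continuity of $z\mapsto S_\gamma(z)$ together with a gain of integrability, and then to prove Fréchet differentiability by a direct difference-quotient estimate, identifying the candidate derivative with the minimiser of $\F_\gamma$ over $\A(0)$. \textbf{Step 1 (state equation and basic stability).} Since the nonsmooth term $\int_\Om d(z)|\p|$ has been replaced by the smooth convex $\int_\Om d(z)h_\gamma(\p)$, the functional $\E_\gamma(z,\cdot,\cdot,\cdot)$ is $C^1$, strictly convex and coercive on the affine space $\A(w)$, so $(u_z,\strain_z,\p_z)=S_\gamma(z)$ is the unique solution in $\A(w)$ of
\[
\int_\Om\C(z)\strain_z\cdot\eeta+\int_\Om\ha(z)\p_z\cdot\qq+\int_\Om d(z)\nabla_\q h_\gamma(\p_z)\cdot\qq=\int_\Om\ell(z)f\cdot v+\int_{\Gamma_N}g\cdot v\,\di\HH^{n-1}
\]
for every $(v,\eeta,\qq)\in\A(0)$. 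Testing with states of the form $(0,-\qq,\qq)$ shows that $\p_z$ is determined pointwise from the deviatoric part of $\C(z)\strain_z$ through a smooth return map $\Pi_\gamma$ that is Lipschitz in both arguments, uniformly in $\gamma$; eliminating $\p$, the displacement $u_z$ solves a quasilinear, strongly monotone elliptic system with mixed boundary conditions. Subtracting the equations for two controls $z_1,z_2$, testing with the difference of states and using monotonicity of $\nabla_\q h_\gamma$, uniform positivity of $\C,\ha$ and Korn's inequality (licit since $\HH^{n-1}(\Gamma_D)>0$) gives $\|S_\gamma(z_1)-S_\gamma(z_2)\|_{H^1\times L^2\times L^2}\le C\|z_1-z_2\|_{L^\infty}$.

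\textbf{Step 2 (higher integrability).} This is the point where the Gröger regularity of $\Om\cup\Gamma_N$ enters. Applying Gröger's $W^{1,q}$-estimate for monotone systems with mixed boundary conditions on Gröger-regular sets~\cite{MR990595}, together with the assumption that $f,g\in L^p$ and $w\in W^{1,p}$ with $p>2$, produces a fixed exponent $q\in(2,p]$ (depending only on $\Om,\Gamma_N,\gamma$ and the ellipticity constants) such that $S_\gamma$ maps $L^\infty(\Om)$ into $W^{1,q}(\Om;\R^n)\times L^q(\Om;\M^n_S)\times L^q(\Om;\M^n_D)$; applying the same estimate to the linear system solved by $u_{z_1}-u_{z_2}$, whose coefficient field is the uniformly bounded and elliptic secant of the nonlinearity, one upgrades Step~1 to $\|\strain_{z_1}-\strain_{z_2}\|_{L^q}+\|\p_{z_1}-\p_{z_2}\|_{L^q}\le C\|z_1-z_2\|_{L^\infty}$.

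\textbf{Step 3 (derivative).} Given $z$ and $\varphi\in L^\infty(\Om)$, the quadratic functional $\F_\gamma^{S_\gamma(z)}$ has a unique minimiser $(\hat u,\hat\strain,\hat\p)$ over $\A(0)$: its quadratic part is coercive on $\A(0)$ by positivity of $\C,\ha$, nonnegativity of $\nabla^2_\q h_\gamma$ and Korn, and its linear part is bounded with norm $\le C\|\varphi\|_{L^\infty}$. One checks that this minimiser is the unique solution of the equation obtained by linearising the state equation of Step~1 in the direction $\varphi$, and that $\varphi\mapsto(\hat u,\hat\strain,\hat\p)=:S'_\gamma(z)[\varphi]$ is linear and bounded from $L^\infty(\Om)$ into $\A(0)$. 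To see that this is the Fréchet derivative, put $R_\varphi:=S_\gamma(z+\varphi)-S_\gamma(z)-S'_\gamma(z)[\varphi]\in\A(0)$; subtracting the state equation at $z+\varphi$, the state equation at $z$ and the linearised equation, and Taylor-expanding $\C,\ha,d,\ell$ at $z$ and $\nabla_\q h_\gamma$ at $\p_z$, one obtains $B_z(R_\varphi,\cdot)=\langle\rho_\varphi,\cdot\rangle$ on $\A(0)$, where $B_z$ is the coercive symmetric bilinear form $B_z\big((\hat u,\hat\strain,\hat\p),(v,\eeta,\qq)\big)=\int_\Om\C(z)\hat\strain\cdot\eeta+\int_\Om\ha(z)\hat\p\cdot\qq+\int_\Om d(z)(\nabla^2_\q h_\gamma(\p_z)\hat\p)\cdot\qq$ and $\rho_\varphi\in\A(0)^*$ gathers: the first-order Taylor remainders of the coefficients, which are $o(\|\varphi\|_{L^\infty})$ in $L^\infty$ times $L^2$-bounded state quantities; the cross terms $(\C'(z)\varphi)(\strain_{z+\varphi}-\strain_z)$, $(\ha'(z)\varphi)(\p_{z+\varphi}-\p_z)$ and the like, which are $O(\|\varphi\|_{L^\infty}^2)$ in $L^2$ by Step~1; and the quadratic plastic remainder $d(z)\big(\nabla_\q h_\gamma(\p_{z+\varphi})-\nabla_\q h_\gamma(\p_z)-\nabla^2_\q h_\gamma(\p_z)(\p_{z+\varphi}-\p_z)\big)$. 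For the last term one uses the pointwise bound $|\nabla_\q h_\gamma(\qq_1)-\nabla_\q h_\gamma(\qq_2)-\nabla^2_\q h_\gamma(\qq_2)(\qq_1-\qq_2)|\le C_\gamma\min\{|\qq_1-\qq_2|,|\qq_1-\qq_2|^2\}\le C_\gamma|\qq_1-\qq_2|^{1+\theta}$, valid for any $\theta\in[0,1]$, chooses $\theta>0$ with $2(1+\theta)\le q$ — possible precisely because $q>2$ — and concludes, via Step~2,
\[
\|d(z)\big(\nabla_\q h_\gamma(\p_{z+\varphi})-\nabla_\q h_\gamma(\p_z)-\nabla^2_\q h_\gamma(\p_z)(\p_{z+\varphi}-\p_z)\big)\|_{L^2}\le C_\gamma\|\p_{z+\varphi}-\p_z\|_{L^q}^{1+\theta}\le C_\gamma\|\varphi\|_{L^\infty}^{1+\theta}.
\]
Hence $\|\rho_\varphi\|_{\A(0)^*}=o(\|\varphi\|_{L^\infty})$, and coercivity of $B_z$ combined with Korn gives $\|R_\varphi\|_{H^1\times L^2\times L^2}\le C\|\rho_\varphi\|_{\A(0)^*}=o(\|\varphi\|_{L^\infty})$, which proves differentiability and the formula~\eqref{e:der}.

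The main obstacle is Step~2 and its use in Step~3: the superposition operator $\p\mapsto\nabla_\q h_\gamma(\p)$ is \emph{not} Fréchet differentiable from $L^2$ into $L^2$, so the naïve implicit function theorem on the natural $L^2$-based state space does not apply; the Gröger regularity of $\Om\cup\Gamma_N$ and the $L^p$-data with $p>2$ are used exactly to gain an integrability margin $q>2$ for the plastic strain, which is what turns the quadratic plastic remainder from $O(\|\varphi\|_{L^\infty})$ into $o(\|\varphi\|_{L^\infty})$. The rest is a careful but essentially routine accounting of the remainder terms, which in the paper is organised into a chain of technical lemmas.
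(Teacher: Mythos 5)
Your proposal is correct and follows essentially the same route as the paper: the Euler--Lagrange characterisation with pointwise elimination of $\p$ through the return map, the Gr\"oger-type $W^{1,q}$-estimate with $q>2$ (the paper invokes the Herzog--Meyer--Wachsmuth theorem for the monotone operator $B_{z,\gamma}$), and the subtraction of the linearised equation followed by a coercivity estimate on the remainder correspond exactly to Lemmas~\ref{l:1}--\ref{l:integrability} and the proof of Theorem~\ref{t:regoptim}. The only divergence is local: for the quadratic plastic remainder the paper uses a mean-value point $\boldsymbol{\xi}_{t}$ on the segment $[\p,\p_{t}]$ and dominated convergence of $\nabla^{2}_{\q}h_{\gamma}(\boldsymbol{\xi}_{t})$ in $L^{\nu}$, which yields $o(t)$ along each ray, whereas your pointwise bound $\min\{a,a^{2}\}\le a^{1+\theta}$ with $2(1+\theta)\le q$ gives the quantitative rate $O(\|\varphi\|_{\infty}^{1+\theta})$ and hence Fr\'echet differentiability directly.
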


\begin{remark}
We notice that the minimization problem~\eqref{e:der} admits a unique solution, since the functional~$\F^{(u_{\gamma}, \strain_{\gamma}, \p_{\gamma})}_{\gamma}$ is strictly convex as a consequence of assumptions~\eqref{e:1}--\eqref{e:2}, of the positivity of~$d$, and of the convexity of~$h_{\gamma}$.
\end{remark}

\begin{remark}
We remark that the additional $p$-integrability of the   density of the applied body force~$f$\EEE, of the applied traction $g$, and of the boundary datum~$w$ is necessary to prove the differentiability of the control-to-state operator~$S_{\gamma}$, while they are not needed to show existence of solutions of the optimality problems~\eqref{e:optimization1.1}--\eqref{e:optimization2.1},\eqref{e:optimization1}--\eqref{e:optimization2}, and~\eqref{e:optimization3}--\eqref{e:optimization4}.
\end{remark}

From Theorem~\ref{t:regoptim} we will infer the first-order optimality conditions for the regularized problem~\eqref{e:optimization3}--\eqref{e:optimization4}.

\begin{corollary}[First-order conditions for the regularized problem]
\label{c:regoptim}
In the framework of Theorem~\ref{t:regoptim}, assume that $z_{\gamma} \in H^{1} (\Om; [0,1])$ is a solution of~\eqref{e:optimization3}--\eqref{e:optimization4} with corresponding displacement~$( u_{\gamma}, \strain_{\gamma}, \p_{\gamma} ) \in \A(w)$. Then, there exists~$ ( \overline{u}_{\gamma}, \overline{\strain}_{\gamma}, \overline{\p}_{\gamma} ) \in \A(0)$ such that for every $(v, \eeta, \qq) \in \A(0)$ and every $\varphi \in L^{\infty}(\Om) \cap H^{1}(\Om)$
 \begin{align}
 & \int_{\Om} \C(z_{\gamma})  \overline{\strain}_{\gamma} {\, \cdot\, }\eeta \, \di x 
+ \int_{\Om} \ha(z_{\gamma}) \overline{\p}_{\gamma} { \, \cdot \,} \qq \, \di x \label{e:optreg2}
%+ \int_{\Om} ( \C'(z_{\gamma}) \varphi) \strain_{\gamma} {\, \cdot\, } \eeta \, \di x  
%+ \int_{\Om} (\ha'(z_{\gamma}) \varphi) \p_{\gamma} {\, \cdot\,} \qq \, \di x \label{e:34}
\\
& \qquad\qquad
%+ \int_{\Om} \varphi \, d'(z_{\gamma}) \nabla_{\q} h_{\gamma}(\p_{\gamma})  {\, \cdot\,} \qq \, \di x
+ \int_{\Om} d (z_{\gamma} ) \big( \nabla^{2}_{\q} h_{\gamma} (\p_{\gamma}) \overline{\p}_{\gamma} \big){ \, \cdot \,} \qq \, \di x  - \int_{\Om} \ell (z) \, f{\, \cdot\,} v \, \di x - \int_{\Gamma_{N}} g {\, \cdot\,} v \, \di \HH^{n-1} = 0 \,,\nonumber %- \int_{\Om} f {\, \cdot \,}  v \, \di x = 0 \,,
  \\[1mm] 
&  \int_{\Om}  \varphi \, \ell'(z_{\gamma} ) f {\, \cdot\,}( \overline{ u}_{\gamma} + u_{\gamma}) \, \di x  -\int_{\Om} \big( \C'(z_{\gamma}) \varphi\big) \overline{\strain}_{\gamma}{\, \cdot \, } \strain_{\gamma} \, \di x \label{e:optreg}
 \\
 & \qquad\qquad
 - \int_{\Om} \big( \ha'(z_{\gamma}) \varphi \big) \overline{\p}_{\gamma} {\, \cdot \,} \p_{\gamma} \, \di x 
   - \int_{\Om} \varphi d' ( z_{\gamma}) \nabla_{\q} h_{\gamma} ( \p_{\gamma} ) {\, \cdot\,} \overline{\p}_{\gamma} \, \di x  \nonumber
    \\
 &
 \qquad\qquad
 + \int_{\Om}  \delta \nabla{z}_{\gamma} {\, \cdot \, } \nabla{\varphi}  + \frac{1}{\delta}\,\varphi  (z_{\gamma} (1 - z_{\gamma})^{2} - z_{\gamma}^{2} (1 - z_{\gamma})) \, \di x  = 0 \,. \nonumber
\end{align}
\end{corollary}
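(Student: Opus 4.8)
The plan is to derive the optimality system from the abstract first-order condition for the composite functional $z \mapsto \J_\delta(z, u_\gamma(z))$ together with the differentiability established in Theorem \ref{t:regoptim}, using a standard adjoint-state argument. First I would record the reduced optimization problem: since for each $z$ the forward problem \eqref{e:optimization4} has a unique solution $(u_\gamma, \strain_\gamma, \p_\gamma) = S_\gamma(z)$, problem \eqref{e:optimization3}--\eqref{e:optimization4} is equivalent to minimizing the reduced functional $j_\gamma(z) \coloneq \J_\delta(z, u_\gamma(z))$ over $z \in H^1(\Om) \cap L^\infty(\Om)$. By Theorem \ref{t:regoptim} the map $z \mapsto S_\gamma(z)$ is Fréchet differentiable from $L^\infty(\Om)$ into $\A(w)$, and $\J_\delta$ is clearly differentiable in its arguments; hence $j_\gamma$ is differentiable and at a minimizer $z_\gamma$ we have $j_\gamma'(z_\gamma)[\varphi] = 0$ for all admissible directions $\varphi \in H^1(\Om)\cap L^\infty(\Om)$ (note the constraint $z \in [0,1]$ imposes no restriction on directions, by Remark \ref{r:2} and the construction of the coefficients). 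Writing this out with the chain rule gives
\begin{displaymath}
\partial_u \J_\delta(z_\gamma, u_\gamma)\big[ S'_\gamma(z_\gamma)[\varphi] \big] + \partial_z \J_\delta(z_\gamma, u_\gamma)[\varphi] = 0 \,,
\end{displaymath}
where $\partial_u \J_\delta(z_\gamma, u_\gamma)[v] = \int_\Om \ell(z_\gamma) f \cdot v \, \di x + \int_{\Gamma_N} g \cdot v \, \di \HH^{n-1}$ picks out only the displacement component of $S'_\gamma(z_\gamma)[\varphi]$, and $\partial_z \J_\delta$ produces the Modica--Mortola terms in the last line of \eqref{e:optreg}.

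The second step is to introduce the adjoint state. The derivative $S'_\gamma(z_\gamma)[\varphi] = (v_\varphi, \eeta_\varphi, \qq_\varphi)$ is, by \eqref{e:der}, characterized as the unique minimizer of the strictly convex quadratic $\F_\gamma^{(u_\gamma, \strain_\gamma, \p_\gamma)}$ over $\A(0)$, equivalently by its Euler--Lagrange equation: for all $(v,\eeta,\qq) \in \A(0)$,
\begin{align*}
& \int_\Om \C(z_\gamma) \eeta_\varphi \cdot \eeta \, \di x + \int_\Om \ha(z_\gamma) \qq_\varphi \cdot \qq \, \di x + \int_\Om d(z_\gamma)\big(\nabla^2_\q h_\gamma(\p_\gamma)\qq_\varphi\big)\cdot\qq \, \di x
\\
& \quad = - \int_\Om (\C'(z_\gamma)\varphi)\strain_\gamma \cdot \eeta \, \di x - \int_\Om (\ha'(z_\gamma)\varphi)\p_\gamma \cdot \qq \, \di x - \int_\Om \varphi\, d'(z_\gamma)\nabla_\q h_\gamma(\p_\gamma)\cdot\qq \, \di x + \int_\Om \varphi\,\ell'(z_\gamma) f\cdot v \, \di x \,.
\end{align*}
I would then define the adjoint state $(\overline u_\gamma, \overline\strain_\gamma, \overline\p_\gamma) \in \A(0)$ as the unique solution of the adjoint equation \eqref{e:optreg2}: this is exactly the statement that the symmetric bilinear form on the left-hand side above, evaluated at $(\overline u_\gamma,\overline\strain_\gamma,\overline\p_\gamma)$ against a test triple, reproduces the linear compliance functional $\int_\Om \ell(z)f\cdot v\,\di x + \int_{\Gamma_N} g\cdot v\,\di\HH^{n-1}$. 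Existence and uniqueness of $(\overline u_\gamma,\overline\strain_\gamma,\overline\p_\gamma)$ follow from Lax--Milgram applied to this bilinear form, which is coercive on $\A(0)$ by \eqref{e:1}, \eqref{e:2}, positivity of $d$, convexity of $h_\gamma$ (so $\nabla^2_\q h_\gamma \ge 0$), and Korn's inequality on $\A(0)$ (using $\HH^{n-1}(\Gamma_D)>0$).

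The final step is the adjoint elimination. I would test the Euler--Lagrange equation for $S'_\gamma(z_\gamma)[\varphi]$ with the adjoint triple $(\overline u_\gamma, \overline\strain_\gamma, \overline\p_\gamma) \in \A(0)$, and test the adjoint equation \eqref{e:optreg2} with $S'_\gamma(z_\gamma)[\varphi] = (v_\varphi, \eeta_\varphi, \qq_\varphi) \in \A(0)$; the symmetry of the bilinear form makes the two left-hand sides coincide, yielding
\begin{displaymath}
\int_\Om \ell(z_\gamma) f \cdot v_\varphi \, \di x + \int_{\Gamma_N} g \cdot v_\varphi \, \di \HH^{n-1} = - \int_\Om (\C'(z_\gamma)\varphi)\strain_\gamma\cdot\overline\strain_\gamma \, \di x - \int_\Om (\ha'(z_\gamma)\varphi)\p_\gamma\cdot\overline\p_\gamma \, \di x - \int_\Om \varphi\, d'(z_\gamma)\nabla_\q h_\gamma(\p_\gamma)\cdot\overline\p_\gamma \, \di x + \int_\Om \varphi\,\ell'(z_\gamma) f\cdot\overline u_\gamma \, \di x \,.
\end{displaymath}
Substituting the left-hand side here for $\partial_u\J_\delta(z_\gamma,u_\gamma)[S'_\gamma(z_\gamma)[\varphi]]$ in the reduced optimality condition from the first step, and adding $\partial_z\J_\delta(z_\gamma,u_\gamma)[\varphi]$, produces precisely \eqref{e:optreg} after collecting terms (the $\int_\Om \varphi\,\ell'(z_\gamma)f\cdot u_\gamma\,\di x$ contribution comes from $\partial_z\J_\delta$ differentiating $\ell(z)f\cdot u_\gamma$ in $z$, combined with the $\overline u_\gamma$ term above). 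The main obstacle I anticipate is bookkeeping rather than analysis: one must keep careful track of signs and of which copies of $\strain_\gamma, \p_\gamma$ are the \emph{state} (frozen in $\F_\gamma$) versus the \emph{linearized state} $S'_\gamma(z_\gamma)[\varphi]$, and verify that the compliance term differentiated in the reduced functional is consistently split between the adjoint term and the explicit $\partial_z$ term; the Lax--Milgram/Korn coercivity check for the adjoint, while routine, is where the Gröger-regularity and $p>2$ hypotheses of Theorem \ref{t:regoptim} are implicitly needed to make $S_\gamma$ well-defined and differentiable into $\A(w)$.
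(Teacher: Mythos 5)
Your proposal is correct and follows essentially the same route as the paper: the paper likewise defines $(\overline{u}_{\gamma}, \overline{\strain}_{\gamma}, \overline{\p}_{\gamma})$ as the unique minimizer of the quadratic functional whose Euler--Lagrange equation is~\eqref{e:optreg2} (equivalent to your Lax--Milgram formulation), obtains the reduced first-order condition by differentiating $t \mapsto \J_{\delta}(z_{\gamma}+t\varphi, u_{\gamma}^{t})$ at $t=0$ using Theorem~\ref{t:regoptim}, and then eliminates the linearized state $S'_{\gamma}(z_{\gamma})[\varphi]$ by testing~\eqref{e:optreg2} against it and exploiting the symmetry of the bilinear form, exactly as in your adjoint-elimination step. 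The sign and term bookkeeping in your final display matches the paper's equations~\eqref{e:optreg4}--\eqref{e:someeq3}.
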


In order to prove the Frech\'et differentiability of the control-to-state operator~$S_{\gamma}$ stated in Theorem~\ref{t:regoptim} we first need to investigate the integrability and continuity properties of~$S_{\gamma} (z)$ for~$z\in L^{\infty}(\Om)$. This is the subject of the following three lemmas.

\begin{lemma}\label{l:1}
For every $ z  \in \R$ and every $\gamma \in (0, + \infty)$ let $F_{ z , \gamma} \colon \M^{n}_{D} \to \M^{n}_{D}$ be the map defined as
\begin{equation}\label{e:Fgamma}
F_{ z , \gamma} ( \q) \coloneq \C( z ) \q + \ha ( z ) \q + d(z) \nabla_{\q} h_{\gamma} (\q)  \qquad \text{for every $\q \in \M^{n}_{D}$}\,.
\end{equation}
Then, there exist three constants~$C_{1}, C_{2}, C_{3} > 0$ independent of~$\gamma$ and~$ z $ and a constant~$C_{\gamma}>0$ (depending only on~$\gamma$) such that the following inequalities hold:
\begin{align}
& | F_{ z , \gamma} (\q_{1}) - F_{ z , \gamma} (\q_{2}) | \leq C_{\gamma} | \q_{1} - \q_{2} | \,, \label{e:11} \\[1mm]
&  \big( F_{ z , \gamma} (\q_{1}) - F_{ z , \gamma} (\q_{2}) \big) \cdot (\q_{1} - \q_{2}) \geq C_{1} | \q_{1} - \q_{2} |^{2} . \label{e:12} \\[1mm]
& (F_{ z _{1}, \gamma} (\q_{1}) - F_{ z _{2} , \gamma} (\q_{2} ) ) \cdot (\q_{1} - \q_{2} ) \label{e:9}
\\
&
\quad \geq  C_{1} | \q_{1} - \q_{2} |^{2} - C_{2} | \q_{2} | | z _{1} -  z _{2} | |\q_{1} - \q_{2} | 
- C_{3} |  z _{1} -  z _{2} | | \q_{1} - \q_{2} | \nonumber \,, 
\end{align}
for every $ z ,  z _{1},  z _{2} \in \R$ and every $\q_{1}, \q_{2} \in \M^{n}_{D}$.

Moreover,~$F_{ z , \gamma}$ is invertible for every $ z  \in \R$ and every~$\gamma \in (0,+\infty)$, and its inverse satisfies
\begin{equation} \label{e:15}
| F_{ z , \gamma}^{-1} ( \q_{1}) - F_{ z , \gamma}^{-1} (\q_{2}) | \leq \widetilde{C} | \q_{1} - \q_{2} |  
\end{equation}
for every $\q_{1}, \q_{2} \in \M^{n}_{D}$, for a positive constant~$\widetilde{C}$ independent of~$\gamma$ and~$ z $.
\end{lemma}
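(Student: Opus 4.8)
The plan is to establish \eqref{e:11}, \eqref{e:12}, \eqref{e:9} by direct estimates on the three summands of $F_{z,\gamma}$ in \eqref{e:Fgamma}, and then to deduce invertibility of $F_{z,\gamma}$ together with \eqref{e:15} from its strong monotonicity. Before the estimates I would record three elementary facts. A direct computation gives $\nabla_\q h_\gamma(\q) = \q\,(|\q|^2 + \gamma^{-2})^{-1/2}$, so that $\nabla_\q h_\gamma(0)=0$ — whence $F_{z,\gamma}(0)=0$ — and $|\nabla_\q h_\gamma(\q)|\le 1$ for every $\q\in\M^n_D$ and every $\gamma\in(0,+\infty)$. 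Since $h_\gamma$ is convex, $\nabla_\q h_\gamma$ is monotone: $(\nabla_\q h_\gamma(\q_1)-\nabla_\q h_\gamma(\q_2))\cdot(\q_1-\q_2)\ge 0$. Finally, because $\mu,\lambda,h,d\in C^1(\R)$ are constant outside $[0,1]$, their derivatives are continuous with compact support, hence bounded; therefore $z\mapsto\C(z)$, $z\mapsto\ha(z)$, $z\mapsto d(z)$ are \emph{globally} Lipschitz on $\R$, with constants $L_\C,L_\ha,L_d$ independent of $\gamma$ and $z$. This global Lipschitz property is the key structural input.

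For \eqref{e:11} I would use the triangle inequality together with the upper bounds $\beta_\C,\beta_\ha$ in \eqref{e:1}--\eqref{e:2}, the bound $d(z)\le\beta$, and the estimate \eqref{e:h3} for $\nabla_\q h_\gamma$, obtaining $C_\gamma:=\beta_\C+\beta_\ha+2\gamma\beta$ — the only constant allowed to blow up as $\gamma\to+\infty$. For \eqref{e:12} I would pair $F_{z,\gamma}(\q_1)-F_{z,\gamma}(\q_2)$ with $\q_1-\q_2$: the elastic term contributes at least $\alpha_\C|\q_1-\q_2|^2$ by \eqref{e:1}, the hardening term is $\ge 0$ by \eqref{e:2}, and the dissipation term is $\ge 0$ by $d(z)>0$ and monotonicity of $\nabla_\q h_\gamma$; hence $C_1:=\alpha_\C$, independent of $\gamma$ and $z$. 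For \eqref{e:9} I would split $F_{z_1,\gamma}(\q_1)-F_{z_2,\gamma}(\q_2)=[F_{z_1,\gamma}(\q_1)-F_{z_1,\gamma}(\q_2)]+[F_{z_1,\gamma}(\q_2)-F_{z_2,\gamma}(\q_2)]$; the first bracket paired with $\q_1-\q_2$ is bounded below by $C_1|\q_1-\q_2|^2$ via \eqref{e:12}, while the second bracket equals $(\C(z_1)-\C(z_2))\q_2+(\ha(z_1)-\ha(z_2))\q_2+(d(z_1)-d(z_2))\nabla_\q h_\gamma(\q_2)$ and hence has norm at most $(L_\C+L_\ha)|z_1-z_2|\,|\q_2|+L_d|z_1-z_2|$ using $|\nabla_\q h_\gamma(\q_2)|\le 1$; pairing with $\q_1-\q_2$ and applying Cauchy--Schwarz yields \eqref{e:9} with $C_2:=L_\C+L_\ha$ and $C_3:=L_d$.

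For invertibility, fixing $z\in\R$, $\gamma\in(0,+\infty)$ and $\p\in\M^n_D$, I would note that $F_{z,\gamma}(\q)=\p$ is equivalent to $\q$ being a fixed point of $T_\tau(\q):=\q-\tau\,(F_{z,\gamma}(\q)-\p)$; expanding $|T_\tau(\q_1)-T_\tau(\q_2)|^2$ and invoking \eqref{e:11} and \eqref{e:12} gives $|T_\tau(\q_1)-T_\tau(\q_2)|^2\le(1-2\tau C_1+\tau^2 C_\gamma^2)|\q_1-\q_2|^2$, which is a strict contraction for $0<\tau<2C_1/C_\gamma^2$, so Banach's fixed-point theorem on the finite-dimensional space $\M^n_D$ provides a unique preimage and $F_{z,\gamma}$ is a bijection. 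Finally, if $F_{z,\gamma}(\q_i)=\p_i$, then \eqref{e:12} gives $(\p_1-\p_2)\cdot(\q_1-\q_2)\ge C_1|\q_1-\q_2|^2$, and Cauchy--Schwarz yields $|\q_1-\q_2|\le C_1^{-1}|\p_1-\p_2|$, i.e.\ \eqref{e:15} with $\widetilde C:=1/C_1$, independent of $\gamma$ and $z$. The argument is entirely elementary; the only genuine subtlety is the bookkeeping of constants — checking that $C_1,C_2,C_3,\widetilde C$ hide no dependence on $\gamma$ (they do not, since $|\nabla_\q h_\gamma|\le 1$ and the monotonicity of $\nabla_\q h_\gamma$ are $\gamma$-uniform) nor a merely local Lipschitz dependence on $z$ (ruled out by the coefficients being $C^1$ and constant off $[0,1]$).
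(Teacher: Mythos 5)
Your proof is correct and follows essentially the same route as the paper's: the same term-by-term estimates via \eqref{e:1}--\eqref{e:2}, \eqref{e:h3}, the bound $|\nabla_{\q}h_{\gamma}|\le 1$, and convexity of $h_{\gamma}$ for \eqref{e:11}--\eqref{e:12}, and the identical splitting $F_{z_1,\gamma}(\q_1)-F_{z_2,\gamma}(\q_2)=[F_{z_1,\gamma}(\q_1)-F_{z_1,\gamma}(\q_2)]+[F_{z_1,\gamma}(\q_2)-F_{z_2,\gamma}(\q_2)]$ for \eqref{e:9}. The only addition is that you make explicit (via the Banach fixed-point contraction) the standard invertibility argument for a Lipschitz, strongly monotone map, which the paper leaves implicit when it deduces invertibility and \eqref{e:15} from \eqref{e:11}--\eqref{e:12}.
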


\begin{proof}
Inequality~\eqref{e:11} follows from assumptions~\eqref{e:1}--\eqref{e:2}, from the Lipschitz continuity of~$d$, and from inequality~\eqref{e:h3}. In particular,~$C_{\gamma}$ depends on~$\gamma$ because the Lipschitz constant in~\eqref{e:h3} degenerates with~$\gamma$. Inequality~\eqref{e:12} is a consequence of~\eqref{e:1}--\eqref{e:2}, of the convexity of~$h_{\gamma}$, and of the sign of~$d$. The constant~$C_{1}$ is independent of~$\gamma \in (0,+\infty)$ as~\eqref{e:1}--\eqref{e:2} are. From~\eqref{e:11}--\eqref{e:12} we infer that~$F_{ z , \gamma}$ is invertible, with inverse~$F_{ z , \gamma}^{-1} \colon \M^{n}_{D} \to \M^{n}_{D}$ satisfying~\eqref{e:15}.

Let us now show~\eqref{e:9}. By definition of~$F_{ z , \gamma}$, for every $ z _{1},  z _{2} \in \R$ and every~$\q_{1}, \q_{2} \in \M^{n}_{D}$ we have 
\begin{align}\label{e:10}
& ( F_{ z _{1}, \gamma}  (\q_{1}) - F_{ z _{2} , \gamma} (\q_{2} ) ) \cdot (\q_{1} - \q_{2} ) 
\\
&
= \big( \C ( z _{1}) \q_{1} + \ha( z _{1}) \q_{1} + d( z _{1} ) \nabla_{\q}h_{\gamma} (\q_{1}) - \C( z _{2}) \q_{2} \big) \cdot ( \q_{1} - \q_{2} ) \nonumber
\\
&
\qquad + \big( - \ha( z _{2}) \q_{2} - d( z _{2} ) \nabla_{\q} h_{\gamma} (\q_{2}) \big) \cdot ( \q_{1} - \q_{2} ) \nonumber 
\\
&
= \big( F_{ z _{1}, \gamma} (\q_{1}) - F_{ z _{1}, \gamma}( \q_{2} ) \big) \cdot (\q_{1} - \q_{2} ) + \big( (\C ( z _{1}) - \C( z _{2})) \q_{2} + (\ha( z _{1}) - \ha( z _{2})) \q_{2} \nonumber
\\
& \qquad + ( d( z _{1}) - d( z _{2}) )\nabla_{\q} h_{\gamma}(\q_{2}) \big) \cdot (\q_{1} - \q_{2}) \,. \nonumber
\end{align}
Inequality~\eqref{e:9} can be deduced from~\eqref{e:10} by taking into account the assumptions~\eqref{e:1}--\eqref{e:2}, the regularity of~$\C$,~$\ha$, and~$d$ w.r.t.~$ z $, inequality~\eqref{e:12}, and the bound $ | \nabla_{\q} h_{\gamma}(\q_{2}) | \leq 1$. 
\end{proof}

\begin{lemma}\label{l:2}
For every $\gamma \in (0,+\infty)$ and every $ z  \in \R$ let the map $b_{ z , \gamma} \colon \M^{n}_{S} \to \M^{n}_{S}$ be defined as
\begin{equation}\label{e:bgamma}
b_{ z , \gamma}( \boldsymbol{\e}) \coloneq \C ( z ) \big ( \boldsymbol{\e}- F^{-1}_{ z , \gamma} ( \Pi_{\M^{n}_{D}} ( \C(  z ) \boldsymbol{\e} ))\big) \qquad \text{for every $\boldsymbol{\e} \in \M^{n}_{S}$},
\end{equation}
where $\Pi_{\M^{n}_{D}} \colon \M^{n} \to \M^{n}_{D}$ denotes the projection operator on~$\M^{n}_{D}$. Then, there exist two positive constants~$c_{1}, c_{2}$ such that for every~$\gamma \in (0, +\infty)$, every $ z  \in \R$, and every $\boldsymbol{\e}_{1}, \boldsymbol{\e}_{2} \in \M^{n}_{S}$
\begin{eqnarray}
&& \displaystyle | b_{ z , \gamma} ( \boldsymbol{\e}_{1}) - b_{ z , \gamma} (\boldsymbol{\e}_{2} ) | \leq c_{1} |\boldsymbol{\e} _{1} - \boldsymbol{\e}_{2} | \,, \label{e:13}\\[1mm]
&& \displaystyle ( b_{ z , \gamma} (\boldsymbol{\e}_{1}) - b_{ z ,\gamma} (\boldsymbol{\e}_{2}) ) \cdot (\boldsymbol{\e}_{1} - \boldsymbol{\e}_{2} ) \geq c_{2} | \boldsymbol{\e}_{1} - \boldsymbol{\e}_{2} |^{2} \,. \label{e:14}
\end{eqnarray}
\end{lemma}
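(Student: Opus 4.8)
The plan is to read $b_{z,\gamma}$ as the pointwise stress response obtained after eliminating the plastic strain. Given $\boldsymbol{\e}\in\M^{n}_{S}$, set $\p\coloneq F_{z,\gamma}^{-1}(\Pi_{\M^{n}_{D}}(\C(z)\boldsymbol{\e}))\in\M^{n}_{D}$ and $\strain\coloneq\boldsymbol{\e}-\p$, so that $b_{z,\gamma}(\boldsymbol{\e})=\C(z)\strain$. Note first that, by isotropy, $\C(z)$ maps $\M^{n}_{D}$ into itself — whence $F_{z,\gamma}$ in~\eqref{e:Fgamma} is well defined as a map $\M^{n}_{D}\to\M^{n}_{D}$, invertible with the $\gamma$-uniform Lipschitz inverse provided by Lemma~\ref{l:1} — and that $\Pi_{\M^{n}_{D}}$ commutes with $\C(z)$ on $\M^{n}_{S}$. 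Then, writing $F_{z,\gamma}(\p)=\Pi_{\M^{n}_{D}}(\C(z)\boldsymbol{\e})=\Pi_{\M^{n}_{D}}(\C(z)\strain)+\C(z)\p$ and comparing with the identity $F_{z,\gamma}(\p)=\C(z)\p+\ha(z)\p+d(z)\nabla_{\q}h_{\gamma}(\p)$ from~\eqref{e:Fgamma}, cancelling $\C(z)\p$ yields the key relation
\[
\Pi_{\M^{n}_{D}}(\C(z)\strain)=\ha(z)\p+d(z)\nabla_{\q}h_{\gamma}(\p)\,,
\]
which transfers the coercivity of $\ha$ and the monotonicity of $\nabla_{\q}h_{\gamma}$ onto the $\p$-component.

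For~\eqref{e:13}, I would take $\boldsymbol{\e}_{1},\boldsymbol{\e}_{2}$ with associated $(\strain_{i},\p_{i})$ and estimate $|b_{z,\gamma}(\boldsymbol{\e}_{1})-b_{z,\gamma}(\boldsymbol{\e}_{2})|=|\C(z)(\strain_{1}-\strain_{2})|\le\beta_{\C}(|\boldsymbol{\e}_{1}-\boldsymbol{\e}_{2}|+|\p_{1}-\p_{2}|)$, using $|\C(z)\boldsymbol{\e}|\le\beta_{\C}|\boldsymbol{\e}|$ and $\strain_{i}=\boldsymbol{\e}_{i}-\p_{i}$. The term $|\p_{1}-\p_{2}|$ is then controlled by $|\p_{1}-\p_{2}|\le\widetilde{C}\,|\Pi_{\M^{n}_{D}}(\C(z)(\boldsymbol{\e}_{1}-\boldsymbol{\e}_{2}))|\le\widetilde{C}\beta_{\C}\,|\boldsymbol{\e}_{1}-\boldsymbol{\e}_{2}|$, invoking the $\gamma$-uniform bound~\eqref{e:15}, the $1$-Lipschitz continuity of the orthogonal projection $\Pi_{\M^{n}_{D}}$, and again the bound on $\C(z)$. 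This gives~\eqref{e:13} with $c_{1}=\beta_{\C}(1+\widetilde{C}\beta_{\C})$, uniform in $z$ and $\gamma$.

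For~\eqref{e:14}, with $\boldsymbol{\sigma}_{i}\coloneq b_{z,\gamma}(\boldsymbol{\e}_{i})=\C(z)\strain_{i}$ I would expand
\[
(\boldsymbol{\sigma}_{1}-\boldsymbol{\sigma}_{2})\cdot(\boldsymbol{\e}_{1}-\boldsymbol{\e}_{2})=\C(z)(\strain_{1}-\strain_{2})\cdot(\strain_{1}-\strain_{2})+\C(z)(\strain_{1}-\strain_{2})\cdot(\p_{1}-\p_{2})\,.
\]
Since $\p_{1}-\p_{2}\in\M^{n}_{D}$, the second summand equals $\Pi_{\M^{n}_{D}}(\C(z)(\strain_{1}-\strain_{2}))\cdot(\p_{1}-\p_{2})$, which by the key relation is $\ha(z)(\p_{1}-\p_{2})\cdot(\p_{1}-\p_{2})+d(z)\,(\nabla_{\q}h_{\gamma}(\p_{1})-\nabla_{\q}h_{\gamma}(\p_{2}))\cdot(\p_{1}-\p_{2})\ge\alpha_{\ha}|\p_{1}-\p_{2}|^{2}$, using~\eqref{e:2}, the convexity of $h_{\gamma}$, and $d(z)>0$. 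Combining with~\eqref{e:1} bounds the whole expression below by $\alpha_{\C}|\strain_{1}-\strain_{2}|^{2}+\alpha_{\ha}|\p_{1}-\p_{2}|^{2}\ge\tfrac{1}{2}\min\{\alpha_{\C},\alpha_{\ha}\}\,|\boldsymbol{\e}_{1}-\boldsymbol{\e}_{2}|^{2}$, since $|\boldsymbol{\e}_{1}-\boldsymbol{\e}_{2}|^{2}\le 2|\strain_{1}-\strain_{2}|^{2}+2|\p_{1}-\p_{2}|^{2}$; this gives~\eqref{e:14} with $c_{2}=\tfrac{1}{2}\min\{\alpha_{\C},\alpha_{\ha}\}$.

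The main point to get right is the bookkeeping of the deviatoric/spherical splitting together with the isotropy of $\C(z)$: it is isotropy that makes $\C(z)$ preserve $\M^{n}_{D}$ and commute with $\Pi_{\M^{n}_{D}}$, which is what renders the reduction through $F_{z,\gamma}$ consistent and produces the key relation driving both estimates. A secondary subtlety worth stressing is that $F_{z,\gamma}$ itself is only $C_{\gamma}$-Lipschitz with $C_{\gamma}\to+\infty$, so that the $\gamma$-uniform Lipschitz bound~\eqref{e:13} for $b_{z,\gamma}$ rests precisely on the $\gamma$-independent Lipschitz estimate~\eqref{e:15} for $F_{z,\gamma}^{-1}$ established in Lemma~\ref{l:1}.
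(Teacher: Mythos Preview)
Your proof is correct and follows essentially the same route as the paper. The only difference is packaging: the paper introduces an auxiliary map $G_{z,\gamma}(\boldsymbol{\e},\q)=\big(\C(z)(\boldsymbol{\e}-\q),\,F_{z,\gamma}(\q)-\Pi_{\M^{n}_{D}}(\C(z)\boldsymbol{\e})\big)$, proves its strong monotonicity in $(\boldsymbol{\e},\q)$, and then specializes $\q_{i}=F_{z,\gamma}^{-1}(\Pi_{\M^{n}_{D}}(\C(z)\boldsymbol{\e}_{i}))$ so that the second component vanishes; you instead carry out the same computation directly via the splitting $\boldsymbol{\e}_{i}=\strain_{i}+\p_{i}$ and your ``key relation'' $\Pi_{\M^{n}_{D}}(\C(z)\strain)=\ha(z)\p+d(z)\nabla_{\q}h_{\gamma}(\p)$, which is precisely what encodes the vanishing of the second component of $G_{z,\gamma}$. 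Both arguments reduce to the same lower bound $\alpha_{\C}|\strain_{1}-\strain_{2}|^{2}+\alpha_{\ha}|\p_{1}-\p_{2}|^{2}$ and use the $\gamma$-independent Lipschitz estimate~\eqref{e:15} for~\eqref{e:13}; your version is slightly more streamlined and makes the role of isotropy of $\C(z)$ explicit.
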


\begin{proof}
Property~\eqref{e:13} is a direct consequence of~\eqref{e:15}. Let us prove~\eqref{e:14}, instead. Following the ideas of~\cite[Section~5]{delosReyes}, let us define for $ z  \in \R$, $\boldsymbol{\e} \in \M^{n}_{S}$, and~$\q \in \M^{n}_{D}$ the auxiliary function
\begin{displaymath}
G_{ z ,\gamma} (\e, \q) \coloneq \left(  \begin{array}{cc}
\C( z )( \boldsymbol{\e} - \q) \\ [1mm]
F_{ z ,\gamma} (\q) - \Pi_{\M^{n}_{D}} ( \C( z ) \boldsymbol{\e}) 
\end{array}
\right).
\end{displaymath} 
Then, for every $ z  \in \R$, every $\boldsymbol{\e}_{1}, \boldsymbol{\e}_{2} \in \M^{n}_{S}$, and every $\q_{1}, \q_{2} \in \M^{n}_{D}$ we have
\begin{align}\label{e:20}
& \big(   G_{ z ,\gamma}   (\boldsymbol{\e}_{1}, \q_{1}) - G_{ z , \gamma} (\boldsymbol{\e}_{2}, \q_{2}) \big) \cdot \left( \begin{array}{cc} \boldsymbol{\e}_{1} - \boldsymbol{\e}_{2} \\ \q_{1} - \q_{2} \end{array} \right)
\\
&
\quad = \C( z ) \big( (\boldsymbol{\e}_{1} - \q_{1}) - (\boldsymbol{\e}_{2} - \q_{2}) \big) \cdot (\boldsymbol{\e}_{1} - \boldsymbol{\e}_{2}) \nonumber
\\
&
 \qquad + \Pi_{\M^{n}_{D}} \big( \C( z ) (\q_{1} - \q_{2}) + \ha( z ) (\q_{1} - \q_{2}) \big) \cdot (\q_{1} - \q_{2}) \nonumber
 \\
 &
 \qquad+  \big(d( z ) \nabla_{\q} h_{\gamma} (\q_{1}) - d( z )\nabla_{\q} h_{\gamma} (\q_{2})\big) \cdot (\q_{1} - \q_{2})\nonumber
 \\
 &
 \qquad  - \Pi_{\M^{n}_{D}} \big( \C(z) ( \boldsymbol\e_{1} - \boldsymbol\e_{2}) \big) \cdot ( \q_{1} - \q_{2}) \,.  \nonumber
\end{align}
By the convexity of~$h_{\gamma}$ and the assumptions~\eqref{e:1}--\eqref{e:2} we deduce from~\eqref{e:20} that there exists~$C>0$ independent of~$\gamma$,~$ z $,~$\boldsymbol{\e}_{1}, \boldsymbol{\e}_{2}$, and~$\q_{1}, \q_{2}$ such that
\begin{equation}\label{e:22}
\big( G_{ z ,\gamma} (\boldsymbol{\e}_{1}, \q_{1}) - G_{ z , \gamma} (\boldsymbol{\e}_{2}, \q_{2}) \big) \cdot \left( \begin{array}{cc} \boldsymbol{\e}_{1} - \boldsymbol{\e}_{2} \\ \q_{1} - \q_{2} \end{array} \right) \geq C( |\boldsymbol{\e}_{1} - \boldsymbol{\e}_{2} |^{2} + | \q_{1} - \q_{2}|^{2})\,.
\end{equation}
By rewriting~\eqref{e:22} with the particular choice $\q_{i} = F_{ z , \gamma}^{-1}( \Pi_{\M^{n}_{D}} (\C( z ) \boldsymbol{\e}_{i}))$ for $i = 1, 2$ we get~\eqref{e:14}, as $\C( z ) (\boldsymbol{\e}_{i} - \q_{i}) = b_{ z , \gamma} (\boldsymbol{\e}_{i})$ and  $F_{ z , \gamma}(\q_{i}) - \Pi_{\M^{n}_{D}} ( \C ( z ) \boldsymbol{\e}_{i}) = 0$. 
\end{proof}

\begin{lemma}[Bounds on the regularized control-to-state map]
\label{l:integrability}
Let $p\in (2, +\infty)$, $f\in L^{p}(\Om; \R^{n})$, $g \in L^{p}(\Gamma_{N}; \R^{n})$, and $w\in W^{1, p}( \Om ; \R^{n})$. Then, there exist $\tilde{p} \in (2, p)$ and a positive constant~$C>0$ such that for every   $q \in (2, \tilde{p}]$\EEE, every $z, z_{1}, z_{2} \in L^{\infty}(\Om)$, and every $\gamma \in (0,+\infty)$, the following holds:
\begin{eqnarray}
&&\displaystyle \| S_{\gamma}(z) \|_{W^{1, \tilde{p}}\times L^{\tilde{p}} \times L^{\tilde{p}}} \leq C (\| f \|_{p}  + \| g \|_{p} + \| w \|_{W^{1,p}}) \,, \label{e:integrability1} \\[1mm]
&& \displaystyle   \| S_{\gamma}(z_{1}) - S_{\gamma} (z_{2}) \|_{W^{1, q}\times L^{q} \times L^{q}} \leq C   \| z_{1} - z_{2} \|_{\infty} \EEE ( \| f \|_{p} + \| g \|_{p}  + \| w \|_{W^{1, p}} + 1) \,. \label{e:integrability2} 
\end{eqnarray}
\end{lemma}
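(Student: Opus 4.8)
The plan is to eliminate the plastic strain from the state problem~\eqref{e:optimization4}, thereby reducing it to a single quasilinear elliptic system for the displacement whose nonlinear principal part is Lipschitz continuous and strongly monotone with constants independent of $z$ \emph{and} $\gamma$, and then to invoke the Gröger-type $W^{1,p}$-regularity theory for mixed boundary value problems, whose gain of integrability depends only on those two constants and on the assumed Gröger regularity of $\Om\cup\Gamma_{N}$.

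\emph{Step 1 (reduction to an equation for $u$).} For fixed $z$ and $u$, the functional $\p\mapsto\E_{\gamma}(z,u,\e u-\p,\p)$ is strictly convex, coercive and Gâteaux differentiable on $L^{2}(\Om;\M^{n}_{D})$, and its unique minimizer is characterized by the pointwise identity $F_{z,\gamma}(\p)=\Pi_{\M^{n}_{D}}(\C(z)\e u)$; by Lemma~\ref{l:1} this yields $\p=F_{z,\gamma}^{-1}(\Pi_{\M^{n}_{D}}(\C(z)\e u))$ and, with $\strain=\e u-\p$, the stress $\C(z)\strain=b_{z,\gamma}(\e u)$, where $b_{z,\gamma}$ is as in~\eqref{e:bgamma}. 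Substituting this into the optimality condition with respect to $u$ shows that $(u,\strain,\p)=S_{\gamma}(z)$ if and only if $u=w$ on $\Gamma_{D}$, $\strain=\e u-\p$, $\p=F_{z,\gamma}^{-1}(\Pi_{\M^{n}_{D}}(\C(z)\e u))$, and, for every $v\in H^{1}(\Om;\R^{n})$ with $v=0$ on $\Gamma_{D}$,
\begin{equation*}
\int_{\Om}b_{z,\gamma}(\e u)\cdot\e v\,\di x=\int_{\Om}\ell(z)\,f\cdot v\,\di x+\int_{\Gamma_{N}}g\cdot v\,\di\HH^{n-1}.
\end{equation*}
Writing $u=w+U$ with $U=0$ on $\Gamma_{D}$ turns this into a homogeneous mixed problem for $U$ driven by the operator $U\mapsto(v\mapsto\int_{\Om}b_{z,\gamma}(\e w+\e U)\cdot\e v\,\di x)$.

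\emph{Step 2 (the uniform bound~\eqref{e:integrability1}).} By Lemma~\ref{l:2}, $(x,A)\mapsto b_{z(x),\gamma}(\mathrm{sym}\,A)$ is a Carathéodory map on $\Om\times\M^{n}$ vanishing at $A=0$ that is Lipschitz continuous and strongly monotone on symmetric matrices, with constants $c_{1},c_{2}$ independent of $z$ and $\gamma$. Since $\Om\cup\Gamma_{N}$ is regular in the sense of Gröger, the $W^{1,p}$-regularity theory for such mixed elasticity systems (see~\cite{MR990595} and its use in~\cite{delosReyes}; Korn's inequality enters here) produces an exponent $\tilde p\in(2,p)$, depending only on $c_{1},c_{2}$ and on the geometry — hence on neither $z$ nor $\gamma$ — together with the a priori estimate $\|U\|_{W^{1,\tilde p}}\le C(\|b_{z,\gamma}(\e w)\|_{L^{\tilde p}}+\|\ell(z)f\|_{L^{\tilde p}}+\|g\|_{L^{\tilde p}(\Gamma_{N})})$. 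Using $b_{z,\gamma}(0)=0$, \eqref{e:13}, $|\ell(z)|\le1$, $\tilde p<p$, and standard trace and Sobolev embeddings, the right-hand side is $\le C(\|f\|_{p}+\|g\|_{p}+\|w\|_{W^{1,p}})$; adding $\|w\|_{W^{1,\tilde p}}$ bounds $u$. The bounds for $\p$ and $\strain$ then follow from $\p=F_{z,\gamma}^{-1}(\Pi_{\M^{n}_{D}}(\C(z)\e u))$ together with $F_{z,\gamma}^{-1}(0)=0$, \eqref{e:15}, \eqref{e:1}, and $\strain=\e u-\p$.

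\emph{Step 3 (the Lipschitz bound~\eqref{e:integrability2}).} Combining~\eqref{e:9} with~\eqref{e:15} and the Lipschitz continuity of $\C,\ha,d$ in $z$ first yields, uniformly in $\gamma$, the pointwise estimates $|F_{z_{1},\gamma}^{-1}(y)-F_{z_{2},\gamma}^{-1}(y)|\le C(1+|y|)\,|z_{1}-z_{2}|$ and $|b_{z_{1},\gamma}(A)-b_{z_{2},\gamma}(A)|\le C(1+|A|)\,|z_{1}-z_{2}|$. Writing $(u_{i},\strain_{i},\p_{i})=S_{\gamma}(z_{i})$ and subtracting the Step~1 equations for $z_{1}$ and $z_{2}$,
\begin{multline*}
\int_{\Om}\bigl(b_{z_{1},\gamma}(\e u_{1})-b_{z_{1},\gamma}(\e u_{2})\bigr)\cdot\e v\,\di x\\
=\int_{\Om}\bigl(\ell(z_{1})-\ell(z_{2})\bigr)f\cdot v\,\di x-\int_{\Om}\bigl(b_{z_{1},\gamma}(\e u_{2})-b_{z_{2},\gamma}(\e u_{2})\bigr)\cdot\e v\,\di x.
\end{multline*}
By the pointwise estimates, Hölder's inequality with $q\le\tilde p<p$, and~\eqref{e:integrability1} (to control $\|\e u_{2}\|_{L^{\tilde p}}$), the right-hand side is a functional in $W^{-1,q}$ of norm $\le C\|z_{1}-z_{2}\|_{\infty}(\|f\|_{p}+\|g\|_{p}+\|w\|_{W^{1,p}}+1)$. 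Since $b_{z_{1},\gamma}$ retains the constants $c_{1},c_{2}$, the Gröger estimate of Step~2 applied to this difference equation — whose $W^{1,\tilde p}$-solution is $u_{1}-u_{2}$ by uniqueness — gives the corresponding $W^{1,q}$-bound on $u_{1}-u_{2}$. Finally, splitting $\p_{1}-\p_{2}=F_{z_{1},\gamma}^{-1}(\Pi_{\M^{n}_{D}}(\C(z_{1})\e u_{1}))-F_{z_{2},\gamma}^{-1}(\Pi_{\M^{n}_{D}}(\C(z_{2})\e u_{2}))$ into an $\e u$-increment (handled by~\eqref{e:15}) and a $z$-increment (handled by the pointwise bound for $F_{z_{1},\gamma}^{-1}-F_{z_{2},\gamma}^{-1}$ above, the Lipschitz continuity of $\C$, and~\eqref{e:integrability1}), and using $\strain_{1}-\strain_{2}=(\e u_{1}-\e u_{2})-(\p_{1}-\p_{2})$, one obtains~\eqref{e:integrability2}. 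The main obstacle is precisely the uniformity of $\tilde p$ and of all constants in both $z$ and $\gamma$: this is delicate because the Lipschitz constant of $F_{z,\gamma}$, hence the naive ellipticity data of the $\p$-equation, blows up as $\gamma\to+\infty$ by~\eqref{e:11}; what saves the day is that, after eliminating $\p$, the effective operator $b_{z,\gamma}$ inherits from Lemma~\ref{l:2} Lipschitz and monotonicity constants that do not depend on $\gamma$, so the Gröger exponent depends only on those and on the assumed Gröger regularity of $\Om\cup\Gamma_{N}$. Everything else — the elimination of $\p$, the $z$-Lipschitz estimates built on~\eqref{e:9}, the recovery of $\p$ and $\strain$, and the treatment of the loads and boundary datum using $p>2$ — is essentially bookkeeping.
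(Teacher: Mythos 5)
Your proposal follows essentially the same route as the paper: eliminate $\p$ via $F_{z,\gamma}^{-1}$, reduce to the quasilinear mixed problem driven by $b_{z,\gamma}$ whose Lipschitz and monotonicity constants from Lemma~\ref{l:2} are uniform in $z$ and $\gamma$, apply the Gr\"oger-type $W^{1,p}$-theory (the paper invokes~\cite{Herzog}) for both the a priori bound and the difference equation, and recover $\p$ and $\strain$ from~\eqref{e:15} and~\eqref{e:9}. The only cosmetic difference is that you package the $z$-dependence of the operator into a single pointwise estimate for $b_{z_1,\gamma}-b_{z_2,\gamma}$, whereas the paper splits the corresponding right-hand side into the three terms $I_1,I_2,I_3$ of~\eqref{e:25}; the ingredients and the key observation about uniformity in $\gamma$ are identical.
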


\begin{proof}
The proof of~\eqref{e:integrability1} and~\eqref{e:integrability2} follows from an application of~\cite[Theorem~1.1]{Herzog}. To apply such result, we first have to recast the Euler-Lagrange equations associated to the minimization problem~\eqref{e:optimization4} in terms of the sole displacement variable~$u$.

Let us fix $\gamma>0$ and~$z \in L^{\infty}(\Om)$. For simplicity of notation, let $(u, \strain, \p) = S_{\gamma}(z)$ and $(u_{i}, \strain_{i}, \p_{i}) = S_{\gamma} (z_{i})$, $i=1, 2$. From the minimization problem~\eqref{e:optimization4} we deduce that the following Euler-Lagrange equation holds: for every~$ (v, \eeta, \qq) \in \A(0)$
\begin{align}\label{e:EL}
&\int_{\Om} \C(z) (\e u - \p){\, \cdot\,} \eeta \, \di x  + \int_{\Om} \ha(z) \p{\, \cdot\,} \qq \, \di x + \int_{\Om} d(z) \nabla_{\q} h_{\gamma} ( \p ){\, \cdot\,} \qq \, \di x
\\
& \qquad
 - \int_{\Om}  \ell(z)  f {\, \cdot\, } v \, \di x - \int_{\Gamma_{N}} g { \, \cdot\,} v \, \HH^{n-1} = 0 \,, \nonumber
\end{align}
where $\e u$ denotes the symmetric part of the gradient of~$u$. By testing~\eqref{e:EL} with $(0, \eeta, -\eeta) \in \A(0)$ for $\eeta \in L^{2}(\Om; \M^{n}_{D})$ we get that
\begin{equation}\label{e:7}
\C ( z) \p + \ha(z) \p + d(z) \nabla_{\q} h_{\gamma} ( \p ) = \Pi_{\M^{n}_{D}}(  \C (z) \e u) \qquad \text{a.e.~in~$\Om$}\,.
\end{equation}
In view of the definition~\eqref{e:Fgamma} of~$F_{ z , \gamma}$, we  have $F_{z(x), \gamma} (\p (x) ) = \Pi_{\M^{n}_{D} }\big ( \C(z(x)) \e u (x) \big)$ and $\p(x) = F_{z(x), \gamma}^{-1} \big (\Pi_{\M^{n}_{D}} \big ( \C(z(x)) \e u (x) \big)\big)$ for a.e.~$x \in \Om$.

Recalling definition~\eqref{e:bgamma}, we define for $x \in \Om$
\begin{displaymath}
b_{z, \gamma} (x, \boldsymbol{\e}) \coloneq b_{z(x), \gamma} (\boldsymbol{\e}) =  \C (z(x)) \big (\boldsymbol{\e} - F^{-1}_{z(x), \gamma} (\Pi_{\M^{n}_{D}} (\C(z(x)) \boldsymbol{\e} ) \big) \qquad \text{for~$x \in \Om$ and~$\boldsymbol{\e} \in \M^{n}_{S}$}.
\end{displaymath}
From now on, when not explicitly needed, we drop the dependence on the spatial variable~$x \in \Om$ in the definition of~$F_{z, \gamma}^{-1}$, since all the arguments discussed below are valid uniformly in~$\Om$. We rewrite the Euler-Lagrange equation~\eqref{e:EL} in terms of the sole displacement~$u$ and for test functions of the form $(\psi, \e \psi, 0) \in \A(0)$ for $\psi \in H^{1} (\Om; \R^{n})$ with $\psi = 0$ on~$\Gamma_{D}$:
\begin{equation}\label{e:EL2}
\int_{\Om} b_{z, \gamma} (x, \e u) {\, \cdot\,} \e \psi \, \di x = \int_{\Om}  \ell(z)  f {\, \cdot\, } \psi \, \di x + \int_{\Gamma_{N}} g {\, \cdot\,} \psi \, \di \HH^{n-1} \,.
\end{equation}

In view of~\eqref{e:13}--\eqref{e:14}, the nonlinear operator~$B_{z, \gamma} \colon W^{1, p}( \Om; \R^{n}) \to W^{-1, p}( \Om; \R^{n})$ defined as $B_{z, \gamma} (u) \coloneq b_{z, \gamma} (x, \e u)$ satisfies the hypotheses of~\cite[Theorem~1.1]{Herzog}. Since $\Om \cup \Gamma_{N}$ is Gr\"oger regular, $ p \in (2, +\infty)$, $f \in L^{p} (\Om ; \R^{n})$, $g \in L^{p}(\Gamma_{N}; \R^{n})$, and $w \in W^{1, p}(\Om; \R^{n})$, we infer from~\cite[Theorem~1.1]{Herzog} applied to equation~\eqref{e:EL2} that there exist~$\tilde{p} \in (2, p)$ and a constant~$C>0$ such that
 \begin{equation}\label{e:26.1}
 \| u \|_{W^{1, q}} \leq C ( \| f \|_{p} + \| g \|_{p} + \| w \|_{W^{1, p}})
 \end{equation}
 for every $q \in (2, \tilde{p}]$. In particular, $C$ is independent of~$z \in L^{\infty}(\Om)$, of~$\gamma \in (0,+\infty)$, and of~$q \in (2, \tilde{p}]$. Inequality~\eqref{e:integrability1}  can be deduced by combining~\eqref{e:12} and~\eqref{e:26.1}. Indeed, we have that
 \begin{displaymath}
 \| F^{-1}_{z, \gamma} (\Pi_{\M^{n}_{D}} (\C(z) \e u)) \|_{q} \leq C \| \Pi_{\M^{n}_{D}} (\C(z) \e u) \|_{q} \leq C \| u \|_{W^{1, q}}\,.
 \end{displaymath}
 The last inequality implies~\eqref{e:integrability1}.
 
 In order to prove~\eqref{e:integrability2}, we first rewrite the Euler-Lagrange equation~\eqref{e:EL2} satisfied by~$u_{2}$. Namely, for every~$\psi \in W^{1, \tilde{p}'}(\Om; \R^{n})$ with~$\psi = 0 $ on~$\Gamma_{D}$ we have, after a simple algebraic manipulation,
 \begin{align}\label{e:24}
 \int_{\Om} & B_{z_{1}, \gamma} (u_2) {\, \cdot \, } \e \psi \, \di x 
 \\
 &
 = \int_{\Om} \C(z_{1}) \big(  F_{z_{1}, \gamma}^{-1} ( \Pi_{\M^{n}_{D}} (\C(z_{2}) \e u_{2}) ) - F^{-1}_{z_{1}, \gamma} ( \Pi_{\M^{n}_{D}} (\C(z_{1}) \e u_{2}) ) \big) \cdot \e \psi \, \di x \nonumber
 \\
 &
 \qquad + \int_{\Om} \C(z_{1}) \big( F_{z_{2}, \gamma}^{-1} ( \Pi_{\M^{n}_{D}} (\C(z_{2}) \e u_{2})) - F^{-1}_{z_{1}, \gamma} (\Pi_{\M^{n}_{D}} (\C (z_{2}) \e u_{2} ) ) \big) \cdot \e \psi \, \di x \nonumber
 \\
 &
 \qquad + \int_{\Om} \big( \C (z_{1}) - \C(z_{2}) \big) \big( \e u_{2} - F^{-1}_{z_{2}, \gamma} (\Pi_{\M^{n}_{D}} ( \C (z_{2}) \e u_{2})) \big) \cdot \e \psi \, \di x \nonumber
 \\
 &
\qquad  + \int_{\Om} \ell(z_{2}) f {\, \cdot \, } \psi \, \di x  + \int_{\Gamma_{N}} g {\, \cdot\,} \psi \, \di \HH^{n-1} \,. \nonumber
 \end{align}

Comparing~\eqref{e:24} with~\eqref{e:EL2} written for~$(z_{1}, u_{1})$, we deduce that~$u_{1}$ and~$u_{2}$ solve the same kind of equation, with a different right-hand side, always belonging to $W^{-1,\tilde{p}}(\Om; \R^{n})$. Thus, applying once more~\cite[Theorem~1.1]{Herzog}, we infer that there exists~$C>0$ independent of~$z_{1}, z_{2}$ and of~$\gamma$ such that for every $q \in   (2, \tilde{p}]\EEE$
\begin{align}\label{e:25}
\| u_{1} - u_{2} \|_{W^{1, q}} & \leq C \Big( \left\| \C(z_{1}) \big( F_{z_{1}, \gamma}^{-1} ( \Pi_{\M^{n}_{D}} (\C(z_{2}) \e u_{2}) ) - F^{-1}_{z_{1}, \gamma} ( \Pi_{\M^{n}_{D}} (\C(z_{1}) \e u_{2}) ) \big) \right \|_{W^{-1, q}} 
\\
&
\qquad+ \left \| \C(z_{1}) \big( F_{z_{2}, \gamma}^{-1} ( \Pi_{\M^{n}_{D}} (\C(z_{2}) \e u_{2})) - F^{-1}_{z_{1}, \gamma} (\Pi_{\M^{n}_{D}} (\C (z_{2}) \e u_{2} ) ) \big) \right \|_{W^{-1, q}} \nonumber
\\
&
\qquad + \left \| \big( \C (z_{1}) - \C(z_{2}) \big) \big( \e u_{2} - F^{-1}_{z_{2}, \gamma} (\Pi_{\M^{n}_{D}} ( \C (z_{2}) \e u_{2})) \big) \right \|_{W^{- 1, q}} \nonumber
\\
&
\qquad + \left \| \big(\ell(z_{1}) - \ell(z_{2}) \big) f \right \|_{W^{-1, q}}\Big) \nonumber
\\
&
=: C ( I_{1} + I_{2} + I_{3} + I_{4}) \,. \nonumber
\end{align}
By the Lipschitz continuity~\eqref{e:15} of $F^{-1}_{ z ,\gamma}$   and by~\eqref{e:integrability1} \EEE we deduce that
\begin{align}\label{e:I1}
I_{1} & \leq C \| (\C (z_{1}) - \C(z_{2})) \e u_{2} \|_{q} \leq C   \| z_{1} - z_{2} \|_{\infty} \EEE\| u \|_{W^{1, \tilde{p}}} 
\\
&
\leq C ( \| f \|_{p} + \| g \|_{p} + \| w \|_{W^{1, p}})   \| z_{1} - z_{2} \|_{\infty} \EEE  \,. \nonumber
\end{align}

Rewriting~\eqref{e:9} for $\q_{i} = F^{-1}_{z_{i} , \gamma} ( \Pi_{\M^{n}_{D}} (\C (z_{2}) \e u_{2}) ) $ we get that for a.e.~$x \in \Om$
\begin{align}\label{e:smth}
C_{1} | F^{-1}_{z_{1}, \gamma}  ( \Pi_{\M^{n}_{D}} & (\C (z_{2}) \e u_{2}) ) - F^{-1}_{z_{2}, \gamma} (\Pi_{\M^{n}_{D}} (\C (z_{2}) \e u_{2}) ) | 
\\
&
\leq  C_{2} | F^{-1}_{z_{2}, \gamma} (\Pi_{\M^{n}_{D}} (\C (z_{2}) \e u_{2}) ) | \, |z_{1} - z_{2} | + C_{3}  | z_{1} - z_{2} |\,. \nonumber
\end{align}
The  identification $\p_{2} = S_{\gamma, 3}(z_{2}) = F^{-1}_{z_{2}, \gamma} ( \Pi_{\M^{n}_{D}} (\C (z_{2} ) \e u_{2}) )$   and inequalities~\eqref{e:integrability1} and~\eqref{e:smth} \EEE imply that
\begin{equation}\label{e:I2}
I_{2} \leq C ( \| \p_{2} \|_{\tilde{p}} + 1)  \| z_{1} - z_{2} \|_{\infty} \EEE \leq C ( \| f \|_{p} + \| g \|_{p} + \| w \|_{W^{1, p}} + 1 )   \| z_{1} - z_{2} \|_{\infty} \EEE \,.
\end{equation}

As for~$I_{3}$, we simply use the Lipschitz continuity of~$\C(\cdot)$ and inequality~\eqref{e:integrability1} to show that
\begin{equation}\label{e:I3}
I_{3} \leq C   \| z_{1} - z_{2} \|_{\infty} \EEE( \| u_{2} \|_{W^{1, \tilde{p}}} + \| \p_{2}  \|_{\tilde{p}} ) \leq C ( \| f \|_{p} + \| g \|_{p} + \| w \|_{W^{1, p}})   \| z_{1} - z_{2} \|_{\infty} \EEE  \,.
\end{equation}
 In a similar way, since $\ell$ is Lipschitz continuous we obtain
\begin{equation}\label{e:I4}
I_{4} \leq C   \| z_{1} - z_{2} \|_{\infty} \EEE\| f \|_{p}\,. 
\end{equation}

Finally, inserting~\eqref{e:I1}--\eqref{e:I4} into~\eqref{e:25} we infer
\begin{equation}\label{e:uLip}
 \| u_{1} - u_{2} \|_{W^{1, q}} \leq C ( \| f \|_{p} + \| g \|_{p}  + \| w \|_{W^{1, p}} + 1 )   \| z_{1} - z_{2} \|_{\infty} \EEE\,.
\end{equation}
In order to conclude for~\eqref{e:integrability2}, we notice that   inequality~\eqref{e:9} \EEE tested with
\begin{displaymath}
\q_{i} = F^{-1}_{z_{i}(x), \gamma} \big( \Pi_{\M^{n}_{D}} ( \C(  z_{i}(x) \EEE ) \e u_{i}(x)) \big) = \p_{i}(x) \qquad \text{for a.e.~$x \in \Om$}
\end{displaymath}
and integrated over~$\Om$ implies
\begin{align*}
 \| \p_{1} - \p_{2} \|_{q} & \leq   C \big(  \| u_{1} \|_{W^{1,q}} \| z_{1} - z_{2}\|_{\infty} + \| u_{1} - u_{2} \|_{W^{1, q}} + \| \p_{2} \|_{q} \| z_{1} - z_{2} \|_{\infty} \big)
 \\
 &
   \leq C ( \| f \|_{p} + \| g \|_{p}  + \| w \|_{W^{1, p}} + 1)   \| z_{1} - z_{2} \|_{\infty} \,, \EEE
\end{align*}  
  where in the last inequality we have used~\eqref{e:uLip}. \EEE By the triangle inequality, we also estimate~$ \| \strain_{1} - \strain_{2} \|_{q}$, and the proof of~\eqref{e:integrability2} is complete.
\end{proof}

We are now ready to prove Theorem~\ref{t:regoptim}.

\begin{proof}[Proof of Theorem~\ref{t:regoptim}]
Let us fix~$\gamma \in(0,+\infty)$ and~$z, \varphi \in L^{\infty}(\Om)$. For $t \in \R$, let~$z_{t} \coloneq z + t\varphi$,~$(u_{t}, \strain_{t}, \p_{t})\coloneq S_{\gamma}( z_{t})$. The solution for~$t=0$ will be simply denoted with~$(u, \strain, \p)$. Moreover, we denote  with~$(v^{\varphi}_{\gamma}, \eeta^{\varphi}_{\gamma}, \qq^{\varphi}_{\gamma})$  the solution of~\eqref{e:der} and we set
\begin{displaymath}
\overline{v}_{t} \coloneq u_{t} - u - t  v^{\varphi}_{\gamma}  \,, \qquad \overline{\eeta}_{t} \coloneq \strain_{t} - \strain - t  \eeta^{\varphi}_{\gamma}  \,, \qquad \overline{\qq}_{t} \coloneq \p_{t} - \p - t  \qq^{\varphi}_{\gamma}  \,.
\end{displaymath} 
In what follows, we show that
\begin{equation}\label{e:26}
\| ( \overline{v}_{t}, \overline{\eeta}_{t}, \overline{\qq}_{t}) \|_{H^{1} \times L^{2} \times L^{2}} = o ( t ) \,,
\end{equation}
which implies the statement of the Theorem.

Writing the Euler-Lagrange equations satisfied by~$(u_{t}, \strain_{t}, \p_{t})$, $(u, \strain, \p )$, and  $(v^{\varphi}_{\gamma}, \eeta^{\varphi}_{\gamma}, \qq^{\varphi}_{\gamma})$  and subtracting the second and the third from the first one, we obtain, for every $(v, \eeta, \qq) \in \A(0)$,
\begin{displaymath}
\begin{split}
\int_{\Om} & \C(z_{t}) \strain_{t} {\, \cdot\, } \eeta \, \di x - \int_{\Om} \C (z) \strain {\, \cdot\,} \eeta \, \di x - t \int_{\Om} \C(z)  \eeta^{\varphi}_{\gamma}  {\, \cdot\,} \eeta \, \di x - t \int_{\Om} (\C'(z) \varphi) \strain {\, \cdot \, } \eeta \, \di x + \int_{\Om} \ha (z_{t}) \p_{t} {\, \cdot\,} \qq \, \di x
\\
&
 - \int_{\Om} \ha (z) \p {\, \cdot\,} \qq \, \di x - t \int_{\Om} \ha(z)  \qq^{\varphi}_{\gamma}  {\, \cdot \,} \qq \, \di x  - t  \int_{\Om} (\ha'(z) \varphi) \p {\, \cdot\,} \qq \, \di x + \int_{\Om} d(z_{t} ) \nabla_{\q} h_{\gamma}( \p_{t}) {\,\cdot\,} \qq \, \di x 
 \\
 &
 - \int_{\Om} d (z ) \nabla_{\q} h_{\gamma} (\p) {\, \cdot\,} \qq \, \di x - t \int_{\Om} \varphi \, d'(z) \nabla_{\q} h_{\gamma} (\p) {\, \cdot\,} \qq \, \di x - t \int_{\Om} d(z) \big( \nabla^{2}_{\q} h_{\gamma} (\p)  \qq^{\varphi}_{\gamma}  \big){ \, \cdot \,} \qq \, \di x 
 \\
 &
 - \int_{\Om} \ell(z_{t}) f {\, \cdot\,} v \, \di x + \int_{\Om} \ell(z) f {\, \cdot\,} v \, \di x + t \int_{\Om} \ell'(z) \varphi \, f{\, \cdot\,} v \, \di x = 0 \,. 
\end{split}
\end{displaymath}
By a simple algebraic manipulation, we rewrite the previous equality as
\begin{align}\label{e:27}
&0  =  \bigg(\int_{\Om}  \C(z_{t}) \strain_{t} {\, \cdot\, } \eeta \, \di x - \int_{\Om} \C (z) \strain {\, \cdot\,} \eeta \, \di x - t \int_{\Om} \C(z)  \eeta^{\varphi}_{\gamma}  {\, \cdot\,} \eeta \, \di x - t \int_{\Om} (\C'(z) \varphi) \strain {\, \cdot \, } \eeta \, \di x \bigg)
\\
&
\quad + \bigg( \int_{\Om} \ha (z_{t}) \p_{t} {\, \cdot\,} \qq \, \di x  - \int_{\Om} \ha (z) \p {\, \cdot\,} \qq \, \di x -  t  \int_{\Om} \ha(z)  \qq^{\varphi}_{\gamma}  {\, \cdot \,} \qq \, \di x  -  t \int_{\Om} (\ha'(z) \varphi) \p {\, \cdot\,} \qq \, \di x  \bigg) \nonumber
\\
&
\quad + \bigg( \int_{\Om}  \big( d(z_{t}) - d(z) - t \varphi \, d' (z)\big) \nabla_{\q} h_{\gamma} (\p) {\, \cdot\,} \qq \, \di x  \nonumber 
\\
&
\quad + \int_{\Om} \big( d(z_{t}) - d( z) \big) \big( \nabla_{\q} h_{\gamma} (\p_{t}) - \nabla_{\q} h_{\gamma} ( \p) \big) \cdot \qq \, \di x  \bigg) \nonumber
\\ 
&
\quad + \bigg( \int_{\Om} d(z) \big(\nabla_{\q} h_{\gamma} (\p_{t}) - \nabla_{\q} h_{\gamma} (\p) - t \nabla_{\q}^{2} h_{\gamma} (\p)  \qq^{\varphi}_{\gamma}  \big) {\, \cdot \,} \qq \, \di x \bigg) \nonumber
 \\
&
\quad  - \bigg(  \int_{\Om} \big( \ell(z_{t}) - \ell(z) - t \ell'(z) \varphi \big)  f{\, \cdot\,} v \, \di x \bigg)\nonumber
\\
&
= : I_{t, 1} + I_{t, 2} + I_{t, 3} + I_{t, 4}  + I_{t, 5} \, . \nonumber
\end{align}

Let us now estimate $I_{t, j}$, $j = 1, \ldots,  5 $. We write~$I_{t, 1}$ as
\begin{displaymath}
I_{t, 1} =  \int_{\Om}  \! \C(z) \overline{\eeta}_{t} {\, \cdot\, } \eeta \, \di x +\! \int_{\Om} \! (\C(z_{t}) - \C(z)) ( \strain_{t} - \strain) {\, \cdot \, } \eeta \, \di x
+ \! \int_{\Om} \! \big(\C(z_{t}) - \C(z) - t (\C'(z) \varphi)\big) \strain {\, \cdot\,} \eeta \, \di x .
\end{displaymath}
In a similar way, we have that
\begin{displaymath}
I_{t, 2} =  \int_{\Om}  \! \ha(z) \overline{\qq}_{t} {\, \cdot\, } \qq \, \di x + \! \int_{\Om} \! (\ha(z_{t}) - \ha(z)) ( \p_{t} - \p) {\, \cdot \, } \qq \, \di x
+ \! \int_{\Om} \! \big(\ha(z_{t}) - \ha(z) - t (\ha'(z) \varphi)\big) \p {\, \cdot\,} \qq \, \di x .
\end{displaymath}

As for~$I_{t, 4}$, since~$h_{\gamma} \in C^{\infty}( \M^{n}_{D})$, for every $t>0$ there exists $\boldsymbol{\xi}_{t}$ laying on the segment~$[\p, \p_{t}]$ such that
\begin{displaymath}
\begin{split}
I_{t, 4} & = \int_{\Om} d(z) \big(\nabla^{2}_{\q} h_{\gamma} (\boldsymbol{\xi}_{t}) ( \p_{t} - \p) - t \nabla_{\q}^{2} h_{\gamma} (\p)  \qq^{\varphi}_{\gamma}  \big) {\, \cdot \,} \qq \, \di x
\\
&
= \int_{\Om} d( z ) \big( \nabla^{2}_{\q} h_{\gamma} (\boldsymbol\xi_{t}) - \nabla^{2}_{\q} h_{\gamma} (\p)\big) ( \p_{t} - \p) {\, \cdot\,} \qq \, \di x + \int_{\Om} d (z) \nabla_{\q}^{2} h_{\gamma} (\p)  \overline{\qq}_{t} {\, \cdot \, } \qq \, \di x \,. 
\end{split}
\end{displaymath}

Inserting the previous equalities in~\eqref{e:27}, choosing the test function $(v, \eeta, \qq) = ( \overline{v}_{t}, \overline{\eeta}_{t}, \overline{\qq}_{t}) \in \A(0)$, using~\eqref{e:1}--\eqref{e:2}, the Lipschitz continuity of~$\C(\cdot)$,~$\ha( \cdot)$,~$d(\cdot)$, and~$\nabla_{\q} h_{\gamma}$, the convexity of~$h_{\gamma}$, and Lemma~\ref{l:integrability}, we obtain the estimate
\begin{align}  \label{e:28}
  \| ( \overline{v}_{t}, \overline{\eeta}_{t}, \overline{\qq}_{t} ) \|_{H^{1} \times L^{2}\times L^{2}}^{2} \EEE \leq & \ C_{\gamma} t^{2} \| \varphi \|_{\infty}^{2}  \| ( \overline{v}_{t}, \overline{\eeta}_{t}, \overline{\qq}_{t} ) \|_{H^{1} \times L^{2}\times L^{2}} \EEE  
\\
&
+  \int_{\Om} \big(\C(z_{t}) - \C(z) - t (\C'(z) \varphi)\big) \strain {\, \cdot\,} \overline{\eeta}_{t} \, \di x \nonumber
\\
&
+  \int_{\Om} \big(\ha(z_{t}) - \ha(z) - t (\ha'(z) \varphi)\big) \p {\, \cdot\,} \overline{\qq}_{t} \, \di x \nonumber
\\
&
+ \int_{\Om}  \big( d(z_{t}) - d(z) - t \varphi \, d' (z)\big) \nabla_{\q} h_{\gamma} (\p) {\, \cdot\,} \overline{\qq}_{t} \, \di x \nonumber
\\
&
+  \int_{\Om}d ( z) \big( \nabla^{2}_{\q} h_{\gamma} (\boldsymbol\xi_{t}) - \nabla^{2}_{\q} h_{\gamma} (\p)\big) ( \p_{t} - \p) {\, \cdot\,} \overline{\qq}_{t} \, \di x  \nonumber
\\
&
 -   \int_{\Om} \big( \ell(z_{t}) - \ell(z) - t \ell'(z) \varphi \big)  f{\, \cdot\,} \overline{v}_{t} \, \di x \,,\nonumber
\end{align}
for some positive constant~$C_{\gamma}$ dependent on~$\gamma \in (0,+\infty)$. In view of the regularity of~$\C( \cdot)$,~$\ha( \cdot)$,~$d(\cdot)$, and~$\ell(\cdot)$, we can continue in~\eqref{e:28} with
\begin{align}\label{e:29}
&   \| ( \overline{v}_{t},  \overline{\eeta}_{t}, \overline{\qq}_{t} ) \|_{H^{1} \times L^{2}\times L^{2}}^{2} \EEE  
\\
& 
\qquad \leq  \widetilde{C}_{\gamma} t^{2} \| \varphi\|_{\infty}^{2}\big(   \| ( u, \strain , \p) \|_{H^{1} \times L^{2} \times L^{2}}\EEE + 1 \big)  \| ( \overline{v}_{t}, \overline{\eeta}_{t}, \overline{\qq}_{t} ) \|_{H^{1} \times L^{2}\times L^{2}} \EEE  \nonumber
\\
&
\qquad\qquad  +  \int_{\Om} d( z ) \big( \nabla^{2}_{\q} h_{\gamma} (\boldsymbol\xi_{t}) - \nabla^{2}_{\q} h_{\gamma} (\p)\big) ( \p_{t} - \p) {\, \cdot\,} \overline{\qq}_{t} \, \di x  \nonumber
\\
&
\qquad \leq \widetilde{C}_{\gamma} t \| \varphi\|_{\infty}   \| ( \overline{v}_{t}, \overline{\eeta}_{t}, \overline{\qq}_{t} ) \|_{H^{1} \times L^{2}\times L^{2}}^{2} \EEE \Big( t \| \varphi \|_{\infty} \big(   \| ( u, \strain , \p) \|_{H^{1} \times L^{2} \times L^{2}} \EEE + 1 \big) \nonumber
\\
&
\qquad\qquad + \|  \nabla^{2}_{\q} h_{\gamma} (\boldsymbol\xi_{t}) - \nabla^{2}_{\q} h_{\gamma} (\p) \|_{\nu} \Big)  \nonumber
\end{align}
for some~$\widetilde{C}_{\gamma} > 0 $ and some $\nu \in (1, +\infty)$. In order to conclude for~\eqref{e:26} we are led to show that the last term on the right-hand side of~\eqref{e:29} tends to~$0$ as $t\to 0$. To do this, we explicitly write~$\nabla^{2}_{\q} h_{\gamma}(\q)$ for $\q \in \M^{n}_{D}$:
\begin{equation}\label{e:hessian}
\nabla^{2}_{\q} h_{\gamma} (\q) = \frac{1}{\sqrt{|\q|^{2} + \frac{1}{\gamma^{2}}}} \left( \id - \frac{\q \otimes \q}{ |\q|^{2} + \frac{1}{\gamma^{2}}} \right).
\end{equation}
Since $ \p_{t} \to \p $ in $L^{2} (\Om; \M^{n}_{D})$ as $t\to0$, up to a subsequence we can assume that $\p_{t} \to \p$ a.e.~in~$\Om$. Hence,~$\boldsymbol\xi_{t} \to \p$ and ~$\nabla^{2}_{\q} h_{\gamma} (\boldsymbol\xi_{t}) \to \nabla^{2}_{\q} h_{\gamma}(\p)$ a.e.~in~$\Om$. In view of formula~\eqref{e:hessian} of~$\nabla^{2}_{\q} h_{\gamma}$, we have that $|  \nabla^{2}_{\q}  h_{\gamma} (\boldsymbol\xi_{t}) | \leq 2\gamma$ in~$\Om$. Thus, the dominated convergence theorem implies that $\|  \nabla^{2}_{\q} h_{\gamma} (\boldsymbol\xi_{t}) - \nabla^{2}_{\q} h_{\gamma} (\p) \|_{\nu} \to 0$ as $t\to 0$. This, together with~\eqref{e:29}, concludes the proof of~\eqref{e:26}.
\end{proof}

We conclude this section with the proof of Corollary~\ref{c:regoptim}.

\begin{proof}[Proof of Corollary~\ref{c:regoptim}]
 Let us define the triple $(\overline{u}_{\gamma}, \overline{\strain}_{\gamma}, \overline{\p}_{\gamma}) \in \A(0)$ as the unique solution of
\begin{align}\label{e:adjoint}
\min\, \bigg\{ \frac12 \int_{\Om} \C(z_{\gamma}) \strain {\, \cdot\, } \strain \, \di x & + \frac12 \int_{\Om} \ha (z_{\gamma}) \p{\, \cdot\,} \p \, \di x 
+ \frac12 \int_{\Om} d(z_{\gamma}) \big(\nabla^{2}_{\q} h_{\gamma} (p_{\gamma}) \p \big) {\, \cdot\,} \p \, \di x 
\\
&
- \int_{\Om} \ell (z_{\gamma}) f {\, \cdot\,} u \, \di x - \int_{\Gamma_{N}} g {\, \cdot\,} u \, \di \HH^{n-1}: \, (u, \strain, \p) \in \A(0) \bigg\}\,. \nonumber
\end{align}
In particular, the triple $(\overline{u}_{\gamma}, \overline{\strain}_{\gamma}, \overline{\p}_{\gamma})$ satisfies~\eqref{e:optreg2}. In what follows, we show that $(\overline{u}_{\gamma}, \overline{\strain}_{\gamma}, \overline{\p}_{\gamma})$ also satisfies~\eqref{e:optreg}.

Given $\varphi \in H^{1} (\Om) \cap L^{\infty}(\Om)$,  we set  $( v^{\varphi}_{\gamma}, \eeta^{\varphi}_{\gamma}, \qq^{\varphi}_{\gamma})  \coloneq S'_{\gamma}(z_{\gamma}) [\varphi]$.  By Theorem~\ref{t:regoptim},  $( v^{\varphi}_{\gamma}, \eeta^{\varphi}_{\gamma}, \qq^{\varphi}_{\gamma})$  is a solution of~\eqref{e:der}, which in turn is equivalent to the Euler-Lagrange equation 
\begin{align}
 & \int_{\Om} \C(z_{\gamma})  \eeta^{\varphi}_{\gamma} {\, \cdot\, }\eeta \, \di x 
+ \int_{\Om} \ha(z_{\gamma}) \qq^{\varphi}_{\gamma} { \, \cdot \,} \qq \, \di x 
+ \int_{\Om} ( \C'(z_{\gamma}) \varphi) \strain_{\gamma} {\, \cdot\, } \eeta \, \di x  
+ \int_{\Om} (\ha'(z_{\gamma}) \varphi) \p_{\gamma} {\, \cdot\,} \qq \, \di x \label{e:optreg3}
\\
&\! \quad
+ \int_{\Om} \varphi \, d'(z_{\gamma}) \nabla_{\q} h_{\gamma}(\p_{\gamma})  {\, \cdot\,} \qq \, \di x
+ \int_{\Om} d (z_{\gamma} ) \big( \nabla^{2}_{\q} h_{\gamma} (\p_{\gamma}) \qq^{\varphi}_{\gamma} \big){ \, \cdot \,} \qq \, \di x  - \int_{\Om} \!\! \varphi\, \ell' (z) \, f{\, \cdot\,} v \, \di x  = 0 \,,\nonumber %- \int_{\Om} f {\, \cdot \,}  v \, \di x = 0 \,,
\end{align}
for every $(v, \eeta, \qq) \in \A(0)$. 

%The equality~\eqref{e:optreg} also follows from Theorem~\ref{t:regoptim}. Indeed, 
 For~$t \in \R$ we consider $z^{t}_{\gamma} \coloneq z_{\gamma} + t\varphi$ and denote with $(u_{\gamma}^{t}, \strain_{\gamma}^{t}, \p_{\gamma}^{t})  = S_{\gamma}(z_{\gamma}^{t})$ the corresponding solution of the forward problem~\eqref{e:optimization4}. By optimality of~$z_{\gamma}$ we have that $\J_{\delta} (z_{\gamma}, u_{\gamma} )\leq \J_{\delta}( z_{\gamma}^{t}, u_{\gamma}^{t})$. We divide the previous expression by~$t$ and pass to the limit as~$t\to 0$ taking into account the differentiability of the control-to-state operator~$S_{\gamma}$ from Theorem~\ref{t:regoptim}, so that we deduce
\begin{align}
& \int_{\Om}  \varphi \, \ell'(z_{\gamma} ) f {\, \cdot\,} u_{\gamma}\, \di x  + \int_{\Om} \ell( z_{\gamma}) f{\, \cdot\,} v^{\varphi}_{\gamma} \, \di x + \int_{\Gamma_{N}} g {\, \cdot\,} v^{\varphi}_{\gamma} \, \di \HH^{n-1} \label{e:optreg4}
 \\
 &\quad
  + \int_{\Om}  \delta \nabla{z}_{\gamma} {\, \cdot \, } \nabla{\varphi}    + \frac{1}{\delta}\,\varphi  (z_{\gamma} (1 - z_{\gamma})^{2} - z_{\gamma}^{2} (1 - z_{\gamma})) \, \di x  = 0 \,. \nonumber
\end{align}

Since $( v^{\varphi}_{\gamma}, \eeta^{\varphi}_{\gamma}, \qq^{\varphi}_{\gamma}) \in \A(0)$, from~\eqref{e:optreg2} we infer that
\begin{align}
\int_{\Om} \ell(z_{\gamma})  f{\, \cdot\,} v^{\varphi}_{\gamma}\, \di x + \int_{\Gamma_{N}} g{\, \cdot\,} v^{\varphi}_{\gamma}\, \di \HH^{n-1} = & \int_{\Om} \C(z_{\gamma}) \overline{\strain}_{\gamma} {\, \cdot\,} \eeta^{\varphi}_{\gamma} \, \di x + \int_{\Om} \ha(z_{\gamma} ) \overline{\p}_{\gamma}{\, \cdot\,} \qq^{\varphi}_{\gamma}\, \di x  \label{e:someeq2}
\\
&
+ \int_{\Om} d(z_{\gamma}) \big( \nabla_{\q}^{2} h_{\gamma}(\p_{\gamma}) \overline{\p}_{\gamma}\big) {\, \cdot\,} \qq^{\varphi}_{\gamma} \, \di x \nonumber\,.
\end{align}
As also $(\overline{u}_{\gamma}, \overline{\strain}_{\gamma}, \overline{\p}_{\gamma})$ belongs to~$\A(0)$, by~\eqref{e:optreg3} we continue in~\eqref{e:someeq2} with
\begin{align}
\int_{\Om} \! \ell(z_{\gamma})  f{\, \cdot\,} v^{\varphi}_{\gamma}\, \di x + \int_{\Gamma_{N}} \!\!\! g{\, \cdot\,} v^{\varphi}_{\gamma}\, \di \HH^{n-1} = & -\! \int_{\Om} ( \C'(z_{\gamma}) \varphi) \strain_{\gamma} {\, \cdot\, } \overline{\strain}_{\gamma} \, \di x  
- \! \int_{\Om} (\ha'(z_{\gamma}) \varphi) \p_{\gamma} {\, \cdot\,} \overline{\p}_{\gamma} \, \di x \label{e:someeq3}
\\
& 
-\! \int_{\Om} \varphi \, d'(z_{\gamma}) \nabla_{\q} h_{\gamma}(\p_{\gamma})  {\, \cdot\,} \overline{\p}_{\gamma} \, \di x
+\! \int_{\Om} \! \varphi \ell' (z) \, f{\, \cdot\,} \overline{u}_{\gamma} \, \di x \,. \nonumber
\end{align}
Combining~\eqref{e:optreg4} and \eqref{e:someeq3} we obtain~\eqref{e:optreg}, and the proof is concluded.
\end{proof}

%%%%%%%%%%%%%%%%%%%%%%%%%%%%%%%%%%%%%%%%%%%%%%

\section{Optimality conditions for $\gamma \to + \infty$}
\label{s:optimality}

This section is devoted to the computation of the optimality conditions for~\eqref{e:optimization1}--\eqref{e:optimization2}. Such conditions are obtained by passing to the limit in~\eqref{e:optreg2} and~\eqref{e:optreg}. As the control-to-state operator for~\eqref{e:optimization1}--\eqref{e:optimization2} is not differentiable anymore and quantities such as~$\nabla_{\q} h_{\gamma}$ and~$\nabla^{2}_{\q} h_{\gamma}$ appearing in formulas~\eqref{e:F}--\eqref{e:der} degenerate as~$\gamma \to +\infty$, the limit passage is not completely trivial and deserves some further analysis. This is the content of the following theorem.

%Throughout this section we fix, for every $\gamma \in (0, +\infty)$, a solution $z_{\gamma} \in H^{1}(\Om; [0,1])$ of~\eqref{e:optimization3}--\eqref{e:optimization4}, with corresponding state variable~$(u_{\gamma}, \strain_{\gamma}, \p_{\gamma}) \in \A(w)$. In view of Proposition~\ref{p:2}, we know that, up to a subsequence, $z_{\gamma} \rightharpoonup z$ weakly in $H^{1}(\Om; [0,1])$ and $( u_{\gamma}, \strain_{\gamma}, p_{\gamma}) \to (u, \strain, \p)$ in $H^{1}(\Om; \R^{n}) \times L^{2}(\Om; \M^{n}_{S}) \times L^{2} (\Om; \M^{n}_{D})$ as $\gamma \to +\infty$, where~$z$ is a solution of~\eqref{e:optimization1}--\eqref{e:optimization2} with state~$(u, \strain, \p) \in \A(w)$. The following theorem is the main result of this section and provides the optimality conditions for the solution~$(z, (u, \strain, \p))$ to the optimal control problem~\eqref{e:optimization1}--\eqref{e:optimization2}.

\begin{theorem}[First-order conditions]
\label{t:lim_optim}
Let $p \in (2, +\infty)$, $f\in L^{p}(\Om; \R^{n})$, $g \in L^{p}(\Gamma_{N}; \R^{n})$, and $w \in W^{1, p}( \Om; \R^{n})$. For every $\gamma \in (0, +\infty)$ let $z_{\gamma} \in H^{1}(\Om; [0,1])$ be a solution of~\eqref{e:optimization3}--\eqref{e:optimization4}, with corresponding state variable~$(u_{\gamma}, \strain_{\gamma}, \p_{\gamma}) \in \A(w)$. Then, there exists $z \in H^{1}(\Om; [0,1])$ solution of~\eqref{e:optimization1}--\eqref{e:optimization2} with corresponding state variable~$(u, \strain, \p) \in \A(w)$ such that, up to a subsequence, $z_{\gamma} \rightharpoonup z$ weakly in~$H^{1}(\Om)$ and $(u_{\gamma}, \strain_{\gamma}, \p_{\gamma}) \to (u, \strain, \p)$ in $H^{1}(\Om; \R^{n}) \times L^{2} (\Om; \M^{n}_{S}) \times L^{2}(\Om; \M^{n}_{D})$ as $\gamma \to +\infty$. 

Moreover, there exists~$\boldsymbol{\rho} \in L^{\infty}(\Om)$ such that for every $( v, \eeta, \qq) \in \A(0)$ 
\begin{eqnarray}
& \displaystyle \,\,\, \int_{\Om} \C (z) \strain {\, \cdot\,} \eeta \, \di x + \int_{\Om} \ha(z) \p {\, \cdot\,} \qq \, \di x + \int_{\Om} \boldsymbol\rho {\, \cdot\,} \qq \, \di x - \int_{\Om} \ell(z)  f {\, \cdot \, } v \, \di x - \int_{\Gamma_{N}} g {\, \cdot\, } v \, \di \HH^{n-1} = 0\,. 
 \label{e:30} \\[1mm]
 & \displaystyle \vphantom{\int} \boldsymbol{\rho} {\, \cdot\, } \p = d( z ) | \p | \quad \text{in $\Om$} \,, \qquad \p=0 \quad \text{in~$\{ |\boldsymbol{\rho}| < d( z ) \}$}\,. \label{e:31}
 \end{eqnarray}

Finally, there exist the adjoint variables $(\overline{u}, \overline{\strain}, \overline{\p}) \in \A(0)$ and~$\boldsymbol\pi \in L^{2}(\Om; \M^{n}_{D})$ such that for every $(v, \eeta, \qq) \in \A(0)$ and every $\varphi \in L^{\infty}(\Om) \cap H^{1}(\Om)$ the following hold:
\begin{align}
 & \int_{\Om} \C(z)  \overline{\strain} {\, \cdot\, }\eeta \, \di x 
+ \int_{\Om} \ha(z) \overline{\p} { \, \cdot \,} \qq \, \di x 
+ \int_{\Om} \boldsymbol \pi {\, \cdot\,} \qq \, \di x  - \int_{\Om} \ell (z) \, f{\, \cdot\,} v \, \di x  - \int_{\Gamma_{N}} g {\, \cdot\,} v \, \di \HH^{n-1} = 0 \,, \label{e:32}
\\[1mm]
&  \int_{\Om}  \varphi \, \ell'(z ) f {\, \cdot\,}( \overline{ u} + u ) \, \di x  -\int_{\Om} \big( \C' (z) \varphi\big) \overline{\strain}{\, \cdot \, } \strain  \, \di x   \label{e:32.1}
 \\
 & \qquad\qquad
 - \int_{\Om} \big( \ha'(z) \varphi \big) \overline{\p} {\, \cdot \,} \p \, \di x 
   - \int_{\Om} \varphi \, \frac{d' ( z)}{d(z)} \boldsymbol \rho {\, \cdot\,} \overline{\p}_{\gamma} \, \di x  \nonumber
    \\
 &
 \qquad\qquad
 + \int_{\Om}  \delta \nabla{z} {\, \cdot \, } \nabla{\varphi}  + \frac{1}{\delta}\,\varphi  (z (1 - z )^{2} - z^{2} (1 - z )) \, \di x  = 0 \,. \nonumber\\
%&
%\int_{\Om}  \varphi\, \ell'(z) f {\, \cdot\,} u \, \di x + \int_{\Om} \ell(z) f {\, \cdot\,} \overline{u}_{\varphi} \, \di x + \int_{\Gamma_{N}} g { \, \cdot\,} \overline{u}_{\varphi} \, \di \HH^{n-1} 
% \\
% &
% + \int_{\Om} \delta \nabla{z}{\, \cdot\,} \nabla{\varphi} + \frac{1}{\delta} \,\varphi ( z (1 - z )^{2} - z^{2} (1 - z ) ) \, \di x = 0 \,, \nonumber
%\\[1mm]
%&
&
 \vphantom{\int} \boldsymbol \pi {\, \cdot\,} \p = 0 \quad\text{a.e.~in~$\Om$} \,, \qquad \boldsymbol \pi {\, \cdot\,} \overline{\p} \geq 0 \quad \text{a.e.~in~$\Om$} \,, \qquad \overline{\p} = 0 \quad \text{a.e.~in~$\{ |\boldsymbol\rho| < d(z) \}$}\,. \label{e:33}
\end{align}
\end{theorem}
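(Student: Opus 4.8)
The plan is to fix, once and for all, the subsequence furnished by Proposition~\ref{p:2}, along which $z_{\gamma}\rightharpoonup z$ in $H^{1}(\Om)$ (hence $z_{\gamma}\to z$ a.e.), $(u_{\gamma},\strain_{\gamma},\p_{\gamma})\to(u,\strain,\p)$ strongly in $H^{1}\times L^{2}\times L^{2}$, and $(u,\strain,\p)$ solves~\eqref{e:optimization2}; it then remains to identify the multipliers $\boldsymbol{\rho},\boldsymbol{\pi}$ and the adjoint state $(\overline{u},\overline{\strain},\overline{\p})$ as suitable limits of their regularized counterparts and to pass to the limit in~\eqref{e:optreg2}--\eqref{e:optreg}. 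For the forward multiplier I would set $\boldsymbol{\rho}_{\gamma}\coloneq d(z_{\gamma})\nabla_{\q}h_{\gamma}(\p_{\gamma})$, bounded in $L^{\infty}(\Om;\M^{n}_{D})$ by $M_{d}\coloneq\max_{[0,1]}d$ since $|\nabla_{\q}h_{\gamma}|\leq1$. In fact $\boldsymbol{\rho}_{\gamma}$ converges \emph{strongly} in $L^{2}(\Om;\M^{n}_{D})$ to some $\boldsymbol{\rho}$ with $|\boldsymbol{\rho}|\leq d(z)$: on $\{\p\neq0\}$ from $\nabla_{\q}h_{\gamma}(\p_{\gamma})\to\p/|\p|$ a.e.\ and dominated convergence, whereas on $\{\p=0\}$ one rewrites the forward Euler--Lagrange equation~\eqref{e:7} as $\boldsymbol{\rho}_{\gamma}=\Pi_{\M^{n}_{D}}(\C(z_{\gamma})\e u_{\gamma})-\C(z_{\gamma})\p_{\gamma}-\ha(z_{\gamma})\p_{\gamma}$ and uses the strong convergence of $\e u_{\gamma}$ and $\p_{\gamma}$. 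Passing to the limit in~\eqref{e:EL} yields~\eqref{e:30}, and~\eqref{e:31} follows from the convexity bound $\nabla_{\q}h_{\gamma}(\p_{\gamma})\cdot\p_{\gamma}\geq h_{\gamma}(\p_{\gamma})\geq|\p_{\gamma}|-\gamma^{-1}$: multiplying by $d(z_{\gamma})$, integrating and letting $\gamma\to+\infty$ gives $\int_{\Om}\boldsymbol{\rho}\cdot\p\,\di x\geq\int_{\Om}d(z)|\p|\,\di x$, which together with the pointwise estimate $\boldsymbol{\rho}\cdot\p\leq|\boldsymbol{\rho}|\,|\p|\leq d(z)|\p|$ forces $\boldsymbol{\rho}\cdot\p=d(z)|\p|$ a.e., whence also $\p=0$ on $\{|\boldsymbol{\rho}|<d(z)\}$.

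For the adjoint, testing~\eqref{e:adjoint} with $(0,0,0)$ and using~\eqref{e:1}--\eqref{e:2}, Korn's inequality on $\A(0)$ and the nonnegativity of $\nabla^{2}_{\q}h_{\gamma}$, one bounds $(\overline{u}_{\gamma},\overline{\strain}_{\gamma},\overline{\p}_{\gamma})$ in $H^{1}\times L^{2}\times L^{2}$ uniformly in $\gamma$ and extracts a weak limit $(\overline{u},\overline{\strain},\overline{\p})\in\A(0)$. Testing~\eqref{e:optreg2} with $(0,-\qq,\qq)\in\A(0)$ gives the pointwise identity $d(z_{\gamma})\nabla^{2}_{\q}h_{\gamma}(\p_{\gamma})\overline{\p}_{\gamma}+\ha(z_{\gamma})\overline{\p}_{\gamma}=\Pi_{\M^{n}_{D}}(\C(z_{\gamma})\overline{\strain}_{\gamma})$, so that $\boldsymbol{\pi}_{\gamma}\coloneq d(z_{\gamma})\nabla^{2}_{\q}h_{\gamma}(\p_{\gamma})\overline{\p}_{\gamma}$ is bounded in $L^{2}(\Om;\M^{n}_{D})$; I would take $\boldsymbol{\pi}$ to be a weak $L^{2}$ limit of it. Then~\eqref{e:32} follows by passing to the limit in~\eqref{e:optreg2}: the first two integrals converge because the coefficients converge a.e.\ and boundedly while the fields converge weakly, and the third one by weak convergence of $\boldsymbol{\pi}_{\gamma}$.

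Next I would pass to the limit in~\eqref{e:optreg}. Since $\nabla_{\q}h_{\gamma}(\p_{\gamma})=d(z_{\gamma})^{-1}\boldsymbol{\rho}_{\gamma}\to d(z)^{-1}\boldsymbol{\rho}$ strongly in $L^{2}$ by the previous step, every product in~\eqref{e:optreg} is a pairing of a strongly convergent with a weakly (or strongly) convergent factor — for the load term one additionally uses that $\overline{u}_{\gamma}\to\overline{u}$ strongly in $L^{p'}(\Om;\R^{n})$, $1/p+1/p'=1$, by Rellich's theorem — and one recovers~\eqref{e:32.1}. As for~\eqref{e:33}: first, $d(z_{\gamma})\nabla^{2}_{\q}h_{\gamma}(\p_{\gamma})\p_{\gamma}=d(z_{\gamma})\gamma^{-2}(|\p_{\gamma}|^{2}+\gamma^{-2})^{-3/2}\p_{\gamma}\to0$ strongly in $L^{2}(\{\p\neq0\})$ by~\eqref{e:hessian} and dominated convergence, while $\boldsymbol{\pi}\cdot\p\equiv0$ trivially on $\{\p=0\}$, so that, combined with $\p_{\gamma}\to\p$ strongly, $\boldsymbol{\pi}\cdot\p=0$ a.e.; second, on $\{|\boldsymbol{\rho}|<d(z)\}\subseteq\{\p=0\}$ the relation $|\boldsymbol{\rho}_{\gamma}|=d(z_{\gamma})|\p_{\gamma}|(|\p_{\gamma}|^{2}+\gamma^{-2})^{-1/2}\to|\boldsymbol{\rho}|<d(z)$ forces $\gamma|\p_{\gamma}|$ to remain bounded a.e.\ there, hence \emph{every} eigenvalue of $\nabla^{2}_{\q}h_{\gamma}(\p_{\gamma})$ is of order $\gamma$, and the uniform bound $\int_{\Om}d(z_{\gamma})(\nabla^{2}_{\q}h_{\gamma}(\p_{\gamma})\overline{\p}_{\gamma})\cdot\overline{\p}_{\gamma}\,\di x\leq C$, localized over the sets $\{|\boldsymbol{\rho}|\leq d(z)-\tfrac1k\}$ (with the help of Egorov's theorem), yields $\overline{\p}_{\gamma}\to0$ in $L^{2}(\{|\boldsymbol{\rho}|\leq d(z)-\tfrac1k\})$ for each $k$, hence $\overline{\p}=0$ on $\{|\boldsymbol{\rho}|<d(z)\}$; third, $\boldsymbol{\pi}\cdot\overline{\p}\geq0$ a.e.\ is read off from the pointwise inequality $\boldsymbol{\pi}_{\gamma}\cdot\overline{\p}_{\gamma}=d(z_{\gamma})(\nabla^{2}_{\q}h_{\gamma}(\p_{\gamma})\overline{\p}_{\gamma})\cdot\overline{\p}_{\gamma}\geq0$, using the identification $\boldsymbol{\pi}=d(z)|\p|^{-1}(\id-|\p|^{-2}\,\p\otimes\p)\overline{\p}$ on $\{\p\neq0\}$ (obtained by passing to the limit in $\boldsymbol{\pi}_{\gamma}$ on $\{|\p|\geq\delta\}$, where $\nabla^{2}_{\q}h_{\gamma}(\p_{\gamma})$ stays bounded), the already established $\overline{\p}=0$ on $\{|\boldsymbol{\rho}|<d(z)\}$, and on the remaining set $\{\p=0,\ |\boldsymbol{\rho}|=d(z)\}$ the fact that there $\p_{\gamma}/|\p_{\gamma}|\to\boldsymbol{\rho}/d(z)$ (again by the first step), so that the $\p_{\gamma}$-transverse component of $\overline{\p}_{\gamma}$ vanishes strongly and the complementary component is parallel to a \emph{convergent} direction.

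The hard part is this last step. Because $\nabla_{\q}h_{\gamma}$ and, above all, $\nabla^{2}_{\q}h_{\gamma}$ degenerate as $\gamma\to+\infty$, no soft functional-analytic argument identifies the limits of the products entering~\eqref{e:optreg} and~\eqref{e:33}; one has to exploit the explicit form~\eqref{e:hessian} of the regularized Hessian and the dichotomy it produces — on $\{|\boldsymbol{\rho}|<d(z)\}$ the regularized plastic strain is of size $\gamma^{-1}$, so $\nabla^{2}_{\q}h_{\gamma}(\p_{\gamma})$ is coercive of order $\gamma$ (which is precisely what forces $\overline{\p}=0$ there), whereas on $\{|\boldsymbol{\rho}|=d(z)\}$ only its transverse eigenvalue blows up — in order to obtain the strong convergence of $\overline{\p}_{\gamma}$ where it is needed and to establish the sign and complementarity relations in~\eqref{e:33}.
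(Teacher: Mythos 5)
Most of your argument is sound and, in places, takes a genuinely different route from the paper: your derivation of~\eqref{e:31} from the convexity inequality $\nabla_{\q}h_{\gamma}(\p_{\gamma})\cdot\p_{\gamma}\geq h_{\gamma}(\p_{\gamma})\geq|\p_{\gamma}|-\gamma^{-1}$, your proof of $\boldsymbol\pi\cdot\p=0$ via the symmetric rewriting $\boldsymbol\pi_{\gamma}\cdot\p_{\gamma}=\big(d(z_{\gamma})\nabla^{2}_{\q}h_{\gamma}(\p_{\gamma})\p_{\gamma}\big)\cdot\overline{\p}_{\gamma}$ with the uniform bound $|\nabla^{2}_{\q}h_{\gamma}(\q)\q|\leq1$, and your proof of $\overline{\p}=0$ on $\{|\boldsymbol\rho|<d(z)\}$ via the order-$\gamma$ coercivity of the regularized Hessian plus Egorov are all correct and arguably more transparent than the paper's corresponding computations (the paper instead estimates $\|\boldsymbol\pi_{\gamma}\cdot\p_{\gamma}\|_{1}$ by splitting over $J_{\gamma}=\{|\p_{\gamma}|\leq\gamma^{-1/2}\}$, and obtains $\overline{\p}=0$ from the weak lower semicontinuity of $\qq\mapsto\int_{\Om}|\qq|(d(z)-|\boldsymbol\rho|)\,\di x$). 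The limit passages to~\eqref{e:30},~\eqref{e:32},~\eqref{e:32.1} coincide with the paper's.

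The genuine gap is the middle assertion of~\eqref{e:33}, $\boldsymbol\pi\cdot\overline{\p}\geq0$, on the contact set $\{\p=0,\ |\boldsymbol\rho|=d(z)\}$. There both relevant quantities converge only weakly: writing $\overline{\p}_{\gamma}=\overline{\p}_{\gamma}^{\perp}+s_{\gamma}n_{\gamma}$ with $n_{\gamma}=\p_{\gamma}/|\p_{\gamma}|\to n$ and $\overline{\p}_{\gamma}^{\perp}\to0$ strongly (as you argue), one is left with $\boldsymbol\pi\cdot\overline{\p}=\big(\mathrm{w\text{-}lim}\,s_{\gamma}\big)\big(\mathrm{w\text{-}lim}\,d(z_{\gamma})\lambda_{\gamma}s_{\gamma}\big)$, where $\lambda_{\gamma}=\gamma^{-2}(|\p_{\gamma}|^{2}+\gamma^{-2})^{-3/2}\geq0$ is the longitudinal eigenvalue. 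A product of two weak limits of correlated oscillating sequences does not inherit the pointwise sign of $\lambda_{\gamma}s_{\gamma}^{2}\geq0$ (take $s_{\gamma}$ oscillating with positive mean and $\lambda_{\gamma}$ large exactly where $s_{\gamma}<0$); your sketch supplies no mechanism ruling this out, so ``read off from the pointwise inequality'' does not close. The paper circumvents this entirely by a variational argument: it tests the regularized adjoint equation~\eqref{e:optreg2} with $(\psi\overline{u}_{\gamma},\psi\overline{\strain}_{\gamma}+\overline{u}_{\gamma}\odot\nabla\psi,\psi\overline{\p}_{\gamma})$ for $\psi\geq0$, drops the nonnegative term $\int_{\Om}\boldsymbol\pi_{\gamma}\cdot(\psi\overline{\p}_{\gamma})\,\di x$, passes to the liminf using weak lower semicontinuity of the localized quadratic forms and compactness of $\overline{u}_{\gamma}$, and identifies the resulting defect with $-\int_{\Om}\boldsymbol\pi\cdot(\psi\overline{\p})\,\di x$ via the limit equation~\eqref{e:32}; this yields the sign globally without any pointwise limit identification. (A secondary, fixable, lapse: your identification of $\boldsymbol\pi$ on $\{|\p|\geq\delta\}$ assumes $\nabla^{2}_{\q}h_{\gamma}(\p_{\gamma})$ is uniformly bounded there, but for fixed $\gamma$ the set where $|\p_{\gamma}|<\delta/2$ while $|\p|\geq\delta$ need not be empty, so another Egorov step is required.)
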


\begin{proof}
The equalities~\eqref{e:30}--\eqref{e:31} are equivalent to the minimality of the triple~$(u, \strain, \p)$. For later use, we remark that equation~\eqref{e:30} implies that
\begin{displaymath}
\boldsymbol\rho = \Pi_{\M^{n}_{D}} \big( \C(z) \strain - \ha(z) \p \big)\,.
\end{displaymath}
In a similar way, the minimality of~$( u_{\gamma}, \strain_{\gamma}, \p_{\gamma})$ implies that
\begin{displaymath}
d( z_{\gamma} ) \nabla_{\q} h_{\gamma} ( \p_{\gamma}) = \Pi_{\M^{n}_{D}} \big( \C (z_{\gamma}) \strain_{\gamma} - \ha(z_{\gamma}) \p_{\gamma} \big) \,.
\end{displaymath}
As $(u_{\gamma}, \strain_{\gamma}, \p_{\gamma}) \to (u, \strain, \p)$ in $H^{1}(\Om;\R^{n}) \times L^{2}(\Om; \M^{n}_{S}) \times L^{2}(\Om; \M^{n}_{D})$ and $z_{\gamma} \rightharpoonup z$ weakly in $H^{1}(\Om; [0,1])$, we deduce from the previous equalities that $d(z_{\gamma})\nabla_{\q} h_{\gamma} ( \p_{\gamma}) \to \boldsymbol\rho$ in $L^{2}(\Om; \M^{n}_{D})$.

Let us prove~\eqref{e:32}--\eqref{e:32.1}. For $\gamma \in (0, +\infty)$, in view of Corollary~\ref{c:regoptim} we know that there exists~$(\overline{u}_{\gamma} , \overline{\strain}_{\gamma}, \overline{\p}_{\gamma}) \in \A(0)$ such that~\eqref{e:optreg2}--\eqref{e:optreg} hold.
% we write the Euler-Lagrange equation associated to~\eqref{e:der}: for every $(v, \eeta, \qq) \in \A(0)$
%\begin{equation}\label{e:34}
%\begin{split}
%\int_{\Om} \C(z_{\gamma}) & \overline{\strain}^{\varphi}_{\gamma} {\, \cdot\, }\eeta \, \di x + \int_{\Om} \ha(z_{\gamma}) \overline{\p}^{\varphi}_{\gamma} { \, \cdot \,} \qq \, \di x + \int_{\Om} ( \C'(z_{\gamma}) \varphi) \strain_{\gamma} {\, \cdot\, } \eeta \, \di x  + \int_{\Om} (\ha'(z_{\gamma}) \varphi) \p_{\gamma} {\, \cdot\,} \qq \, \di x 
%\\
%&
%+ \int_{\Om} \varphi \, d'(z_{\gamma}) \nabla_{\q} h_{\gamma}(\p_{\gamma})  {\, \cdot\,} \qq \, \di x
%+ \int_{\Om} d (z_{\gamma} ) \big( \nabla^{2}_{\q} h_{\gamma} (\p_{\gamma}) \overline{\p}^{\varphi}_{\gamma} \big){ \, \cdot \,} \qq \, \di x - \int_{\Om} f {\, \cdot \,}  v \, \di x = 0 \,.
%\end{split}
%\end{equation}
From~\eqref{e:optreg2} we immediately deduce that $(\overline{u}_{\gamma}, \overline{\strain}_{\gamma}, \overline{\p}_{\gamma})$ is bounded in~$H^{1}(\Om; \R^{n}) \times L^{2}(\Om; \M^{n}_{S}) \times L^{2}(\Om; \M^{n}_{D})$. Hence, up to a subsequence,~$(\overline{u}_{\gamma}, \overline{\strain}_{\gamma}, \overline{\p}_{\gamma})$ converges weakly to some~$(\overline{u}, \overline{\strain}, \overline{\p}) \in \A(0)$. In particular, passing to the limit in~\eqref{e:optreg} as~$\gamma \to +\infty$ we infer~\eqref{e:32.1}.

By testing~\eqref{e:optreg2} with~$(0, \eeta, -\eeta)$ for~$\eeta \in L^{2}(\Om; \M^{n}_{D})$, we get that 
\begin{align}\label{e:35}
\boldsymbol\pi_{\gamma}  & \coloneq d(z_{\gamma}) \nabla^{2}_{\q} h_{\gamma} (\p_{\gamma}) \overline{\p}_{\gamma}  = \Pi_{\M^{n}_{D}} \big( \C ( z_{\gamma} ) \overline\strain_{\gamma} - \ha( z_{\gamma}) \overline{\p}_{\gamma} \big) \,. 
\end{align} 
In view of the convergences shown above, from~\eqref{e:35} we infer that $\boldsymbol\pi_{\gamma} $ converges weakly in $L^{2}(\Om; \M^{n}_{D})$ to
\begin{equation}\label{e:pi}
\boldsymbol\pi \coloneq \Pi_{\M^{n}_{D}} \big( \C ( z ) \overline{\strain} - \ha( z) \overline{\p} \big) \,.
\end{equation}
Therefore, passing to the limit in~\eqref{e:optreg2} we obtain~\eqref{e:32}. %In a similar way, passing to the limit in~\eqref{e:optreg} we deduce \eqref{e:32.1}.

We now prove the second inequality in~\eqref{e:33}. For every $\gamma \in (0,+\infty)$ and every $\psi \in C^{\infty}_{c}(\Om)$ with $\psi \geq 0$, we test~\eqref{e:optreg2} with the triple $(\psi \overline{u}_{\gamma}, \psi \overline{\strain}_{\gamma} + \overline{u}_{\gamma} \odot \nabla\psi , \psi \overline{\p}_{\gamma}) \in \A(0)$, obtaining
 \begin{align}
0 = & \ \int_{\Om} \C(z_{\gamma}) \overline{\strain}_{\gamma} {\, \cdot\,} (\psi \overline{\strain}_{\gamma} + \overline{u}_{\gamma} \odot \nabla{\psi} ) \, \di x + \int_{\Om} \ha( z_{\gamma} ) \overline{\p}_{\gamma} {\, \cdot \,} ( \psi \overline{\p}_{\gamma}) \, \di x \label{e:36}
%\\
%&
%+ \int_{\Om} ( \C'(z_{\gamma} ) \varphi) \strain_{\gamma} {\, \cdot\,} ( \psi \overline{\strain}^{\varphi}_{\gamma} + \overline{u}^{\varphi}_{\gamma} \odot \nabla{\psi} ) \, \di x 
%+ \int_{\Om} ( \ha'(z_{\gamma}) \varphi) \p_{\gamma} {\, \cdot\,} ( \psi \overline{\p}^{\varphi}_{\gamma} ) \, \di x  \nonumber
\\
&
   - \int_{\Om} \ell (z_{\gamma}) f {\, \cdot\,} ( \psi \overline{u}_{\gamma}) \, \di x - \int_{\Gamma_{N}} g {\, \cdot\,} (\psi \overline{u}_{\gamma}) \, \di \HH^{n-1} \EEE + \int_{\Om} \boldsymbol\pi_{\gamma} {\, \cdot \,} (\psi \overline{\p}_{\gamma} ) \, \di x \,. \nonumber %- \int_{\Om} f{\, \cdot\,} (\psi \overline{u}^{\varphi}_{\gamma}) \, \di x \,. 
\end{align}
Since $\psi \geq 0$,~$h_{\gamma}$ is convex,  and~\eqref{e:35} holds, we notice that the last term on the right-hand side of~\eqref{e:36} is positive. Recalling that~$\C$,~$\ha$, and~$d$ are of class~$C^{1}$, we can pass to the liminf in~\eqref{e:36} obtaining
 \begin{align}\label{e:37}
0 & \geq   \int_{\Om} \C(z) \overline{\strain}  {\, \cdot\,} (\psi \overline{\strain}  + \overline{u}  \odot \nabla{\psi} ) \di x + \! \int_{\Om} \ha( z ) \overline{\p}  {\, \cdot \,} ( \psi \overline{\p} )  \di x
\\
&
\qquad 
  - \int_{\Om}  \ell (z) {\, \cdot\,} ( \psi \overline{u}) \, \di x  - \int_{\Gamma_{N}} g {\, \cdot\,} (\psi \overline{u}) \, \di \HH^{n-1} \EEE
= - \int_{\Om} \boldsymbol\pi {\, \cdot\,} (\psi \overline{\p} ) \, \di x \,, \nonumber
%\\
%&
%\qquad  + \! \int_{\Om} ( \C'(z ) \varphi) \strain {\, \cdot\,} ( \psi \overline{\strain}_{\varphi}  + \overline{u}_{\varphi}  \odot \nabla{\psi} ) \di x + \int_{\Om} ( \ha'(z) \varphi) \p {\, \cdot\,} ( \psi \overline{\p}_{\varphi}  ) \, \di x \nonumber
%+ \int_{\Om} \varphi \, \frac{d'(z)}{d(z)} \, \boldsymbol\rho {\, \cdot\,} ( \psi \overline{\p}  ) \, \di x  
% - \int_{\Om} f {\, \cdot\,} ( \psi \overline{u}_{\varphi}) \, \di x 
\end{align}
where, in the last equality, we have used~\eqref{e:32} with the test~$( \psi \overline{u} , \psi \overline{\strain} + \overline{u} \odot \nabla \psi, \psi \overline{\p}) \in \A(0)$. The arbitrariness of~$\psi \geq 0$ in~\eqref{e:37} implies that $\boldsymbol\pi {\, \cdot \,} \overline \p \geq 0$ a.e.~in~$\Om$.

Let us show the first equality in~\eqref{e:33}. To do this, we set $J_{\gamma} \coloneq \{ | \p_{\gamma}| \leq \frac{1}{\sqrt{\gamma}} \} $ and estimate~$\| \boldsymbol\pi_{\gamma} {\, \cdot\,} \p_{\gamma} \|_{1}^{2}$ as follows:
\begin{align}
\| \boldsymbol \pi_{\gamma}  {\, \cdot \,} \p_{\gamma} \|_{1}^{2}  & = \Bigg ( \int_{\Om} \frac{ d(z_{\gamma} ) }{\sqrt{| \p_{\gamma}|^{2} + \frac{1}{\gamma^{2}}}}  \Bigg| ( \p_{\gamma} {\, \cdot \,} \overline{\p}_{\gamma} ) - \frac{ | \p_{\gamma} |^{2} (  \overline{\p}_{\gamma}  {\, \cdot\,} \p_{\gamma}) }{ | \p_{\gamma}|^{2} + \frac{1}{\gamma^{2}} } \Bigg| \, \di x \Bigg ) ^{2} \label{e:40}
\\
& 
\leq 2  \Bigg ( \int_{ J_{\gamma} } \frac{ d(z_{\gamma} ) }{\sqrt{| \p_{\gamma}|^{2} + \frac{1}{\gamma^{2}}}}  \Bigg| ( \p_{\gamma} {\, \cdot \,}  \overline{\p}_{\gamma}  )  - \frac{ |\p_{\gamma} |^{2} (  \overline{\p}_{\gamma}   {\, \cdot\,} \p_{\gamma} ) }{ | \p_{\gamma}|^{2} + \frac{1}{\gamma^{2}} } \Bigg| \, \di x \Bigg ) ^{2} \nonumber
\\
&
\qquad + 2 \Bigg ( \int_{  \Om \setminus J_{\gamma} } \frac{ d(z_{\gamma} ) }{\sqrt{| \p_{\gamma}|^{2} + \frac{1}{\gamma^{2}}}}  \Bigg| (  \p_{\gamma} {\, \cdot \,}  \overline{\p}_{\gamma}  ) - \frac{ | \p_{\gamma} |^{2} (  \overline{\p}_{\gamma}  {\, \cdot\,} \p_{\gamma} ) }{ | \p_{\gamma}|^{2} + \frac{1}{\gamma^{2}} } \Bigg| \, \di x \Bigg ) ^{2} 
= : I_{\gamma, 1} + I_{\gamma, 2} \,. \nonumber
\end{align}
We now show that~$I_{\gamma, 1}$ and~$I_{\gamma, 2}$ tend to~$0$ separately. For~$I_{\gamma, 1}$, by H\"older and Cauchy inequalities and by the continuity of~$d$ we have that there exists~$C>0$ independent of~$\gamma$ such that
\begin{align}\label{e:41}
I_{\gamma, 1} & \leq 2 |\Om| \int_{ J_{\gamma}} \frac{ d^{2}(z_{\gamma} )   |  \overline{\p}_{\gamma}  |^{2} | \p_{\gamma} |^{2} }{ | \p_{\gamma} |^{2} + \frac{1}{\gamma^{2}} } \, \Bigg | 1 - \frac{ | \p_{\gamma}|^{2}}{ | \p_{\gamma} | ^{2} + \frac{1}{\gamma^{2}}} \Bigg| ^{2} \di x  
\\
&
\leq C | \Om| \int_{ J_{\gamma}} \frac{  d(z_{\gamma} )  | \overline{\p}_{\gamma} |^{2} |\p_{\gamma}|}{ \sqrt{ |\p_{\gamma}|^{2} + \frac{1}{\gamma^{2}}}} \, \Bigg| 1 - \frac{| \p_{\gamma}|^{2}}{ | \p_{\gamma} |^{2} + \frac{1}{\gamma^{2}}}\Bigg|\, \di x \nonumber
\\
&
\leq  \frac{C| \Om|}{\sqrt{\gamma}} \int_{ J_{\gamma}} \frac{ d(z_{\gamma} ) }{\sqrt{ |\p_{\gamma}|^{2} + \frac{1}{\gamma^{2}}}} \, \Bigg| |  \overline{\p}_{\gamma}  |^{2} - \frac{ ( \p_{\gamma} {\, \cdot\,}  \overline{\p}_{\gamma}  )^{2} }{ | \p_{\gamma}|^{2} + \frac{1}{\gamma^{2}}}  \Bigg| \, \di x \,. \nonumber
\end{align}
In order to conclude that $I_{\gamma, 1} \to 0$, we write explicitly $| \boldsymbol\pi_{\gamma} {\, \cdot\,} \overline{\p}_{\gamma}  |$:
\begin{displaymath}
| \boldsymbol\pi_{\gamma} {\, \cdot\,} \overline{\p}_{\gamma} | = \frac{ d(z_{\gamma} ) }{ \sqrt{ | \p_{\gamma} |^{2} + \frac{1}{\gamma^{2}}}} \, \Bigg| | \overline{\p}_{\gamma} | ^{2} - \frac{ ( \p_{\gamma} {\, \cdot\,}  \overline{\p}_{\gamma} )^{2}}{ | \p_{\gamma} |^{2} + \frac{1}{\gamma^{2}}} \Bigg| \,.
\end{displaymath}
Therefore, we can continue in~\eqref{e:41} with
\begin{displaymath}
I_{\gamma, 1} \leq \frac{C | \Om| }{\sqrt{\gamma}} \int_{\Om} |  \boldsymbol\pi_{\gamma}  {\, \cdot\,}  \overline{\p}_{\gamma}  | \, \di x 
\end{displaymath}
which implies that $I_{\gamma, 1} \to 0$ as $\gamma \to + \infty$, as $ \boldsymbol\pi_{\gamma}$ and~$ \overline{\p}_{\gamma}$ are bounded in~$L^{2}(\Om; \M^{n}_{D})$.

As for~$I_{\gamma, 2}$, instead, by the Cauchy inequality we have
\begin{equation}\label{e:44}
\begin{split}
I_{\gamma, 2} & \leq C \left( \int_{\Om \setminus J_{\gamma}} | \overline{\p}_{\gamma} | \, \Bigg| 1  -  \frac{ | \p_{\gamma}|^{2}}{ | \p_{\gamma}|^{2} + \frac{1}{\gamma^{2}}} \Bigg| \, \di x \right)^{2}
\leq C  \left( \int_{\Om\setminus J_{\gamma}}  \frac{ | \overline{\p}_{\gamma} |}{ \gamma^{2} |\p_{\gamma}|^{2} + 1} \right)^{2} \leq \frac{C}{\gamma^{2}} \|  \overline{\p}_{\gamma}  \|^{2}_{1} \,.
\end{split}
\end{equation}
Thus, also~$I_{\gamma, 2}$ tends to~$0$ as~$\gamma \to +\infty$.

All in all, we deduce from~\eqref{e:40} that $ \boldsymbol\pi_{\gamma}  {\, \cdot\,} \p_{\gamma} \to 0$ in~$L^{1}(\Om)$. Furthermore, we know that $ \boldsymbol\pi_{\gamma} \rightharpoonup \boldsymbol\pi $ weakly in~$L^{2}(\Om; \M^{n}_{D})$ and $\p_{\gamma} \to \p$ in~$L^{2}(\Om; \M^{n}_{D})$, so that $ \boldsymbol\pi_{\gamma} {\, \cdot\,} \p_{\gamma} \rightharpoonup \boldsymbol\pi {\, \cdot\,} \p $ weakly in~$L^{1}(\Om)$, which implies $ \boldsymbol\pi  {\,\cdot\,} \p = 0$ in~$\Om$.

We now check with the third inequality in~\eqref{e:33}. Since $| \boldsymbol\rho| \leq d(z)$, the function
\begin{displaymath}
\qq \mapsto \int_{\Om} |\qq| \big( d( z ) - | \boldsymbol\rho| \big) \, \di x 
\end{displaymath}
is lower semicontinuous w.r.t.~the weak topology of~$L^{2}(\Om; \M^{n}_{D})$. Hence, 
\begin{displaymath}
\begin{split}
0 & \leq \int_{\Om} | \overline{\p} | \big( d(z) - | \boldsymbol\rho| \big) \, \di x \leq \liminf_{\gamma \to + \infty} \int_{\Om} | \overline{\p}_{\gamma}  | \big( d( z ) - | \boldsymbol\rho| \big) \, \di x 
\\
& \leq \limsup_{\gamma \to + \infty} \int_{\Om} |  \overline{\p}_{\gamma} | \, | d(z) - d(z_{\gamma}) | \, \di x + \limsup_{\gamma \to +\infty} \int_{\Om} |  \overline{\p}_{\gamma} | \, \big( d( z_{\gamma} ) - d(z_{\gamma})  | \nabla_{\q} h_{\gamma} (\p_{\gamma}) | \big) \, \di x 
\\
&
\qquad +  \limsup_{\gamma \to +\infty} \int_{\Om} |  \overline{\p}_{\gamma}  | ( d(z_{\gamma})| \nabla_{\q} h_{\gamma} (\p_{\gamma}) | - | \boldsymbol\rho| ) \, \di x \,.
\end{split}
\end{displaymath}
Since $ \overline{\p}_{\gamma}$ is bounded in~$L^{2}(\Om; \M^{n}_{D})$, $d(z_{\gamma})\nabla_{\q} h_{\gamma} (\p_{\gamma}) \to \boldsymbol\rho$ in~$L^{2}(\Om; \M^{2}_{D})$, and~$z_{\gamma} \rightharpoonup z$ weakly in~$H^{1}(\Om; [0,1])$, the previous inequality reduces to
\begin{align}\label{e:42}
0 & \leq \int_{\Om} |  \overline{\p}  | \big( d(z) - | \boldsymbol\rho| \big) \, \di x \leq \limsup_{\gamma \to +\infty} \int_{\Om} |  \overline{\p}_{\gamma} | \, \big( d( z_{\gamma} ) - d(z_{\gamma}) | \nabla_{\q} h_{\gamma} (\p_{\gamma}) | \big) \, \di x
\\
&
= \limsup_{\gamma \to +\infty} \int_{\Om} d(z_{\gamma} ) | \overline{\p}_{\gamma}  | \left( 1 - \frac{ | \p_{\gamma} | }{ \sqrt{| \p_{\gamma} |^{2} + \frac{1}{\gamma^{2}}}} \right)\, \di x  \nonumber
\\
&
\leq  \limsup_{\gamma \to +\infty} \int_{J_{\gamma}} d(z_{\gamma} )  |  \overline{\p}_{\gamma}  | \left( 1 - \frac{ | \p_{\gamma} | }{ \sqrt{| \p_{\gamma} |^{2} + \frac{1}{\gamma^{2}}}} \right)\, \di x  \nonumber
\\
&
\qquad +  \limsup_{\gamma \to +\infty} \int_{\Om \setminus J_{\gamma}} d(z_{\gamma} ) |  \overline{\p}_{\gamma}  | \left( 1 - \frac{ | \p_{\gamma} | }{ \sqrt{| \p_{\gamma} |^{2} + \frac{1}{\gamma^{2}}}} \right)\, \di x  
 \,, \nonumber
\end{align}
where in the second line we have used the explicit expression of~$\nabla_{\q} h_{\gamma} ( \p_{\gamma})$. Arguing as in~\eqref{e:44} we deduce that the second integral on the right-hand side of~\eqref{e:42} tends to~$0$ as $\gamma \to +\infty$. 

As for the integral on~$J_{\gamma}$, instead, by the Cauchy inequality and the continuity of~$d$ there exists a positive constant~$C$ independent of~$\gamma$ such that
\begin{align}\label{e:45}
\int_{J_{\gamma}} d(z_{\gamma} )  |  \overline{\p}_{\gamma}  | \left( 1 - \frac{ | \p_{\gamma} | }{ \sqrt{| \p_{\gamma} |^{2} + \frac{1}{\gamma^{2}}}} \right)\, \di x &  \leq \int_{J_{\gamma}} d(z_{\gamma} ) \left(  |  \overline{\p}_{\gamma}  | - \frac{ |  \p_{\gamma} {\, \cdot\,}  \overline{\p}_{\gamma}  | }{ \sqrt{| \p_{\gamma} |^{2} + \frac{1}{\gamma^{2}}}} \right)\, \di x  
\\
&
\leq C \int_{J_{\gamma}}  \left(  |  \overline{\p}_{\gamma} |^{2} - \frac{ |  \p_{\gamma} {\, \cdot\,}  \overline{\p}_{\gamma}  |^{2} }{  | \p_{\gamma} |^{2} + \frac{1}{\gamma^{2}}} \right)\, \di x  \,. \nonumber
\end{align} 
  On the set $J_{\gamma}$ we have that\EEE
\begin{displaymath}
\begin{split}
\int_{J_{\gamma}} |  \boldsymbol\pi_{\gamma} {\, \cdot\,} \overline{\p}_{\gamma}  | \, \di x & = \int_{J_{\gamma}} \frac{ d(z_{\gamma} ) }{ \sqrt{ | \p_{\gamma} |^{2} + \frac{1}{\gamma^{2}}}} \, \Bigg| |  \overline{\p}_{\gamma}  | ^{2} - \frac{ ( \p_{\gamma} {\, \cdot\,}  \overline{\p}_{\gamma}  )^{2}}{ | \p_{\gamma} |^{2} + \frac{1}{\gamma^{2}}} \Bigg|\, \di x 
\\
&
\geq  \frac{\gamma \lambda}{\sqrt{\gamma + 1}} \int_{J_{\gamma}} \Bigg| |  \overline{\p}_{\gamma}  | ^{2} - \frac{( \p_{\gamma}{\, \cdot \,}  \overline{\p}_{\gamma} )^{2}}{ | \p_{\gamma}|^{2} + \frac{1}{\gamma^{2}}}\Bigg| \, \di x \,.
\end{split}
\end{displaymath}
  Hence, from the boundedness  of~$ \boldsymbol\pi_{\gamma}$ and~$ \overline{\p}_{\gamma} $ in~$L^{2}(\Om; \M^{n}_{D})$ we deduce that\EEE
\begin{equation}\label{e:46}
\lim_{\gamma \to +\infty} \int_{J_{\gamma}}  \Bigg| |  \overline{\p}_{\gamma}  | ^{2} - \frac{( \p_{\gamma}{\, \cdot \,}  \overline{\p}_{\gamma}  )^{2}}{ | \p_{\gamma}|^{2} + \frac{1}{\gamma^{2}}}\Bigg| \, \di x = 0\,.
\end{equation}
Combining~\eqref{e:42}--\eqref{e:46} we get that
\begin{displaymath}
 \int_{\Om} |  \overline{\p}  | \big( d(z ) - | \boldsymbol\rho| \big) \, \di x = 0
 \end{displaymath}
 which implies $|  \overline{\p}  | = 0 $ a.e.~in~$\{ | \boldsymbol\rho| \leq  d(z ) \}$. This concludes the proof of the theorem.
\end{proof}

\section*{Acknowledgment}
US is partially supported by the FWF projects F\,65, I\,2375, and  P\,27052 and by the Vienna Science and Technology Fund (WWTF) through Project MA14-009.

\bibliographystyle{siam}

%\bibliography{A-S_20.bib}
\end{document}